\documentclass{amsart}
\usepackage{amsmath,amssymb,amsthm}
\usepackage{mathbbol}
\usepackage{enumerate}
\usepackage[hidelinks]{hyperref}
\usepackage{cleveref}
\usepackage{tikz-cd}

\numberwithin{equation}{section}
\theoremstyle{plain}
\newtheorem{theorem}{Theorem}[section]
\newtheorem{proposition}[theorem]{Proposition}
\newtheorem{corollary}[theorem]{Corollary}
\newtheorem{lemma}[theorem]{Lemma}
\theoremstyle{definition}
\newtheorem{definition}[theorem]{Definition}
\newtheorem{remark}[theorem]{Remark}
\newtheorem{example}[theorem]{Example}

\newcommand\CC{\mathbf{C}}
\newcommand\ZZ{\mathbf{Z}}
\newcommand\QQ{\mathbf{Q}}
\newcommand\LL{\mathbf{L}}
\newcommand\FF{\mathbf{F}}
\newcommand\aff{\mathbf{A}}
\newcommand\proj{\mathbf{P}}
\newcommand\symgrp{\mathfrak{S}}

\newcommand\cA{\mathcal{A}}
\newcommand\cC{\mathcal{C}}
\newcommand\cL{\mathcal{L}}
\newcommand\cR{\mathcal{R}}
\newcommand\cX{\mathcal{X}}

\newcommand\sm{\mathrm{Sm}}
\newcommand\smqp{\mathrm{SmQP}}
\newcommand\snqp{\mathrm{SNQP}}
\newcommand\sch{\mathrm{Sch}}
\newcommand\qp{\mathrm{QP}}
\newcommand\topcat{\mathrm{Top}}
\newcommand\Rad{\mathrm{Rad}}
\newcommand\cor{\mathrm{Cor}}
\newcommand\sh{\mathrm{SH}}
\newcommand\dm{\mathrm{DM}}
\newcommand\spt{\overline{\mathrm{SPT}}}
\newcommand\nis{\mathrm{Nis}}
\newcommand\rad{\mathrm{rad}}
\newcommand\tr{\mathrm{tr}}
\newcommand\op{\mathrm{op}}
\newcommand\eff{\mathrm{eff}}

\newcommand\bp{\mathrm{BP}}
\newcommand\MU{\mathrm{MU}}
\newcommand\mgl{\mathrm{MGL}}
\newcommand\hz{\mathrm{H}\ZZ}
\newcommand\hf{\mathrm{H}\FF}
\newcommand\btau{\boldsymbol{\tau}}
\newcommand\bk{\boldsymbol{k}}
\newcommand\bone{\mathbf{1}}

\newcommand\Sym{\mathrm{Sym}}
\newcommand\Conj{\mathrm{Conj}}
\DeclareMathOperator{\Spec}{Spec}
\DeclareMathOperator{\Stab}{Stab}
\DeclareMathOperator{\real}{Re}
\DeclareMathOperator{\Sq}{Sq}
\DeclareMathOperator{\CH}{CH}
\DeclareMathOperator*{\hocolim}{hocolim}
\DeclareMathOperator*{\holim}{holim}
\DeclareMathOperator*{\colim}{colim}

\author{Jiahao Hu}
\newcommand{\Addresses}{{%
  \bigskip
  \footnotesize

  \textsc{Yau Mathematical Sciences Center, Tsinghua University,
    Beijing, China 100084}\par\nopagebreak
  Email: \texttt{jiahao.hu.math@gmail.com}
}}
\date{}

\title[Motivic Steenrod algebra and Thom obstructions]{Motivic Steenrod algebra and Thom obstructions without desingularization}
\begin{document}
\maketitle
\begin{abstract}
We give a desingularization-free proof of Voevodsky's identification of bistable mod-$\ell$ motivic cohomology operations with the motivic Steenrod algebra over fields of characteristic zero.  This supplies the direct argument anticipated by Voevodsky in his study of motivic Eilenberg--MacLane spaces. 

As an application, we study Thom's topological obstructions to desingularization, which arose in his work on the Steenrod realization problem.  Without invoking desingularization, we prove that the primary and all higher Thom obstructions vanish on complex algebraic cycles. The proof uses the motivic bidegrees of the successive $k$-invariants in the Brown--Peterson tower to show that every lifting obstruction vanishes.
\end{abstract}
\tableofcontents

\section{Introduction}
In his work on the Steenrod realization problem, Thom developed topological obstructions to representing integral homology classes by smooth oriented manifolds. Applied to the fundamental classes of singular algebraic varieties, these give necessary conditions for the existence of a resolution of singularities \cite{thom1954global}. Hironaka's resolution theorem \cite{hironaka1964resolution1,hironaka1964resolution2}, or alternatively Gabber's refinement of de Jong's alterations applied prime by prime \cite[Theorem~2.1]{illusie2014gabber}, implies that these obstructions vanish on algebraic cycle classes on complex projective varieties. The starting point of this paper is instead to seek a direct explanation of this vanishing that does \textit{not} invoke resolution of singularities or alterations. We hope that understanding its mechanism may shed light on the problem of resolution in positive characteristic.

We approach these obstructions through the stronger problem of realizing homology classes by stably almost complex manifolds. This amounts to lifting them to complex bordism, represented by the spectrum $\MU$. The corresponding cohomological lifting problem is controlled, prime by prime, by the Brown--Peterson spectrum. For each prime $\ell$, Brown and Peterson construct an integral tower of fibrations over the Eilenberg--MacLane spectrum $\hz$, with Eilenberg--MacLane fibers and explicit inductively constructed $k$-invariants \cite[Theorem~1.1]{brown1966spectrum}. 

Our main observation is that this construction lifts to $\sh(\CC)$, the stable motivic homotopy category over $\CC$, and that the successive motivic $k$-invariants have bidegrees $(2d+1,d)$, with $d\ge0$. These degrees explain the vanishing of the obstructions on algebraic cycles. Indeed, motivic cohomology with supports in a closed subvariety $X$ of a smooth complex variety $M$ satisfies $H_X^{a,b}(M;\ZZ)=0$ for $a>2b$ and $b\ge0$. For a cycle of codimension $q$ supported on $X$, every successive obstruction therefore belongs to a group of the form
\begingroup
\setlength{\abovedisplayskip}{4.2pt}
\setlength{\belowdisplayskip}{4.2pt}
\setlength{\abovedisplayshortskip}{0pt}
\setlength{\belowdisplayshortskip}{2.1pt}
\[H_X^{2q+2d+1,q+d}(M;\ZZ)=0.\]
\endgroup
After Betti realization, this implies that Thom's topological obstructions also vanish. We consequently prove that every algebraic cycle class $\sigma\in H_{2n}(X(\CC);\ZZ)$, for $X$ projective, is represented by a continuous map $f:Y\to X(\CC)$ from a closed stably almost complex manifold of real dimension $2n$, with $f_*[Y]=\sigma$.

However, there is a dependence on resolution hidden in this argument. To carry out the Brown--Peterson construction motivically, we need the identification of the motivic Steenrod algebra with the algebra of all bistable mod-$\ell$ motivic cohomology operations. Voevodsky's proof uses resolution of singularities \cite[Theorem~3.49]{voevodsky2010motivic}, while the extension to $\ell$ not equal to the characteristic of the field by Hoyois, Kelly, and {\O}stv{\ae}r replaces resolution by Gabber's refinement of de Jong's alterations \cite{hoyois2017steenrod}. Therefore we must seek an approach to the identification of operations that avoids resolution of singularities and alterations.

The good news is that the dependence on resolution in Voevodsky's proof can be traced to a specific comparison theorem. Voevodsky first computes the motives of motivic Eilenberg--MacLane spaces on the category of seminormal quasi-projective schemes, using the motivic Dold--Thom theorem and symmetric powers. To transfer the computation to smooth schemes, he proves that forming reduced motives commutes with restriction to the smooth site. This is the role of Theorem~1.21 in the proof of Theorem~3.32 of \cite{voevodsky2010motivic}, and it is this comparison theorem that uses resolution of singularities. For the identification of bistable operations, the comparison is needed only for the Eilenberg--MacLane spaces occurring in the argument. Furthermore, by the motivic Dold--Thom theorem, the comparison for Eilenberg--MacLane spaces reduces to that for finite symmetric powers of $S^{2n,n}=\aff^n/(\aff^n-\{0\})$, and eventually to the quotient schemes
\begingroup
\setlength{\abovedisplayskip}{4.2pt}
\setlength{\belowdisplayskip}{4.2pt}
\setlength{\abovedisplayshortskip}{0pt}
\setlength{\belowdisplayshortskip}{2.1pt}
\[
(\aff^n)^d/\symgrp_d
\quad\text{and}\quad
\bigl((\aff^n)^d-\{0\}\bigr)/\symgrp_d,
\]
\endgroup
where $\symgrp_d$ permutes the factors.

In this paper, we establish this comparison without desingularization by directly studying the geometry of these quotients. More generally, let $G$ be a finite group acting linearly on a finite-dimensional vector space $V$ over a field in which $|G|$ is invertible, and let $U$ be the complement of a $G$-stable finite union of proper linear subspaces. We show that $V/G$ and $U/G$ admit good stratifications: each stratum is smooth and has a tubular-type neighborhood in the Nisnevich topology. The existence of such stratifications yields the comparison for finite linear quotients and hence for the Eilenberg–MacLane spaces needed in Voevodsky's argument. This successfully replaces the step of his proof that uses resolution of singularities.

To the best of our knowledge, this gives the first proof of Voevodsky's identification of bistable operations that avoids both resolution of singularities and alterations. In our comparison argument, the characteristic-zero assumption enters in two places. First, it guarantees that Maschke's theorem is applicable in our study of finite linear quotients: when the group order is invertible in the base field, every invariant linear subspace admits an equivariant complement. Second, it allows us to apply the motivic Dold--Thom theorem over $\ZZ$. 

Over a perfect field of characteristic $p>0$, the motivic Dold--Thom theorem is available after inverting $p$ \cite[Theorem~3.7]{voevodsky2010motivic}. We expect that a sufficiently detailed understanding of finite linear quotients in characteristic $p$, including those by groups whose order is divisible by $p$, would allow our comparison method to extend to the setting $\ell\neq p$. Combined with \'etale comparison in place of Betti realization \cite[Section~3.3]{hoyois2017steenrod}, this could yield a proof of the corresponding identification of bistable mod-$\ell$ operations without using alterations.

The paper is organized as follows. \Cref{sec:finite-linear-quotients} studies the geometry of finite linear quotients and constructs their good stratifications. \Cref{sec:change-of-category} proves the comparison of reduced motives for schemes admitting such stratifications. \Cref{sec:motivic-em} applies this result to symmetric powers and Eilenberg--MacLane spaces and establishes Voevodsky's identification of bistable operations over fields of characteristic zero without resolution of singularities or alterations. Finally, \Cref{sec:thom-obstructions} constructs the motivic Brown--Peterson tower and uses the bidegrees of its $k$-invariants to prove the vanishing of the primary and all higher Thom obstructions on algebraic cycle classes.

\subsection*{Acknowledgements} The author thanks Dennis Sullivan for introducing him to this problem and for numerous discussions and constant encouragement, Martin Bendersky for teaching him about the Brown--Peterson spectrum, Nanjun Yang for helpful conversations about motivic cohomology, and Santai Qu for help with algebraic geometry.
\section{Finite linear quotients}\label{sec:finite-linear-quotients} Let $k$ be a field and $G$ a finite group of order invertible in $k$. The goal of this section is to study the geometry of the scheme-theoretic quotient $V/G$ of a finite-dimensional linear space $V$ by a linear $G$-action. We show that $V/G$ admits a stratification by smooth strata and that each stratum has a tubular-type neighborhood in the Nisnevich topology.

The stratification on $V/G$ will be inherited from one on $V$, and will be constructed inductively. In each inductive step, we deal with the complement of previously constructed strata, which is a $G$-stable open subscheme of $V$ of the form
\[
U=V-\bigcup_{L\in\cL} L,
\]
where $\cL$ is a finite $G$-stable collection of proper linear subspaces; each individual $L$ need not be $G$-stable. The new stratum will be formed by the locus in $U$ with the most singular $G$-action, namely the locus with stabilizer subgroups of maximal order. 
\subsection{The inductive strata}
To be more precise, we introduce the following notation. For a subgroup $H\le G$, denote
\begin{align*}
    U^H&=\text{$H$-fixed-point subscheme of $U$},\\
    U_H&=U^H-\bigcup_{g\in G\setminus H} U^{\langle H,g\rangle},
\end{align*}
where $\langle H,g\rangle$ is the subgroup of $G$ generated by $H$ and $g$. For $U\ne\varnothing$, set
\[
m=\max\{|H|: H\le G,\  U_H\neq \varnothing\}.
\]
The new stratum of $U$ will be
\[
S=\bigcup_{|H|=m} U_H.
\]
By $U_H\neq \varnothing$ we mean that $U_H$ is non-empty as a scheme, or equivalently $U_H$ admits a $K$-point for some field extension $K/k$. A $K$-point of $U_H$ is a $K$-point of $U$ whose stabilizer subgroup is \textit{exactly} $H$. Therefore $U_H\cap U_{H'}=\varnothing$ for $H\ne H'$ and $S$ is in fact a \textit{disjoint} union.

Now, under the above notation, we study the properties of $S$ and $S/G$.
\begin{lemma} For a subgroup $H\le G$ of order $|H|=m$, we have $U_H=U^H$. As a consequence, each $U_H$, and therefore $S$, is smooth and closed in $U$.
\end{lemma}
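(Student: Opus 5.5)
To prove $U_H=U^H$ when $|H|=m$, I will show that for each $g\in G\setminus H$ the closed subscheme $U^{\langle H,g\rangle}$ is empty. Suppose not. Then $U^{\langle H,g\rangle}$ has a $K$-point $x$ for some field extension $K/k$. Let $H'=\Stab(x)$ be its stabilizer in $G$, computed on $V_K$. Then $\langle H,g\rangle\le H'$, so $|H'|>m$ because $g\notin H$. By definition $x$ is a $K$-point of $U_{H'}$, since its stabilizer is exactly $H'$. Hence $U_{H'}\neq\varnothing$, which contradicts the maximality of $m$.

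The one point to check is that every $K$-point of $U$ lies in $U_{\Stab(x)}$. This is immediate from the description of $K$-points recalled before the lemma: a point of $U^{H'}$ avoids every $U^{\langle H',g'\rangle}$ with $g'\notin H'$ exactly when no such $g'$ fixes it. It follows that the union removed in the definition of $U_H$ is empty as a scheme, and $U_H=U^H$.

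Next I will show that $U^H$ is smooth and closed in $U$. Since $H$ acts linearly on $V$, the fixed-point subscheme $V^H$ is cut out by the linear equations $hv=v$ for $h\in H$. So $V^H$ is the linear subspace $\bigcap_{h\in H}\ker(h-1)$, which is smooth and closed in $V$. Because $U$ is open in $V$, we get $U^H=V^H\cap U$, an open subscheme of an affine space, hence smooth, and closed in $U$. If one prefers to define fixed-point schemes functorially, these agree: the equations $hv=v$ represent the functor of $H$-fixed points.

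Finally, $S$ is a finite union of the closed subschemes $U_H$ with $|H|=m$, so it is closed. It is smooth because the union is disjoint, by the remark before the lemma. That remark only gives disjointness on $K$-points, which means the closed subschemes have empty intersection, and this is enough for the union to be their coproduct. I don't expect a real obstacle; the only care needed is to work with $K$-points over extensions, since the $U_H$ may have no $k$-points.
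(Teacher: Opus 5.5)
Your proof is correct and follows the paper's argument. The paper just says that $U_H=U^H$ follows from the maximality of $m$, and that $U^H=V^H\cap U$ is open in the linear space $V^H$ and closed in $U$. You additionally spell out the maximality step via the stabilizer of a $K$-point, and you justify why the disjoint union $S$ of the $U_H$ is closed and smooth, which the paper leaves implicit.
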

\begin{proof}
    The first assertion follows from the maximality of $m$. The second assertion follows from $U_H=U^H=V^H\cap U$. Indeed, since $U$ is open in $V$, $U_H$ is open in the linear space $V^H$ and therefore smooth over $k$. Meanwhile, $V^H$ is closed in $V$ and so $U_H$ is closed in $U$.
\end{proof}
\begin{lemma}
    Let $\Conj_m(G)$ be the set of conjugacy classes of subgroups of order $m$ of $G$. Then
    \begin{align*}
         S&=\coprod_{[H]\in \Conj_m(G)} G\cdot U_H,\\
         S/G&\simeq \coprod_{[H]\in \Conj_m(G)} U_H/N_G(H).
    \end{align*}
    Here $N_G(H)$ is the normalizer of $H$ in $G$. The quotients $U_H/N_G(H)$ and $S/G$ are smooth closed subschemes of $U/G$.
\end{lemma}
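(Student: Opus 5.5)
First I will establish the decomposition of $S$. For $g\in G$, translation by $g$ carries $U^H$ isomorphically onto $U^{gHg^{-1}}$. It also carries $U_H$ onto $U_{gHg^{-1}}$, since stabilizers are conjugated. Hence $G\cdot U_H=\bigcup_{H'\sim H}U_{H'}$. Since the $U_{H'}$ for distinct $H'$ are pairwise disjoint, grouping the $U_H$ with $|H|=m$ by conjugacy class writes $S$ as a disjoint union of the closed subschemes $G\cdot U_H$. By the previous lemma each $U_{H'}$ is closed in $U$, and there are finitely many of them, so each $G\cdot U_H$ is closed. A finite disjoint union of closed subsets is also open in their union, so the decomposition is a scheme-theoretic coproduct. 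Each $G\cdot U_H$ is $G$-stable, so taking quotients gives $S/G\simeq\coprod (G\cdot U_H)/G$.

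Next I will identify $(G\cdot U_H)/G$ with $U_H/N_G(H)$. The subgroup $N_G(H)$ preserves $U_H$. Moreover $gU_H\cap U_H\neq\varnothing$ forces $gHg^{-1}=H$, i.e.\ $g\in N_G(H)$. It follows that $G\cdot U_H=\coprod_{gN\in G/N}gU_H$, a disjoint union of open-closed pieces, and that it is $G$-equivariantly isomorphic to the induced scheme $G\times^{N}U_H$. For an induced scheme one has $(G\times^N U_H)/G\simeq U_H/N$, since the $G$-invariant functions on $\prod_{gN}$ of copies of $\mathcal{O}(U_H)$ are exactly the $N$-invariant functions on one copy. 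All schemes here are affine or covered by $G$-stable affines, because $U/G$ is covered by the basic opens $D(f)$ with $f$ invariant. So it suffices to compute rings of invariants on $G$-stable affines.

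For smoothness, note that $H$ acts trivially on $U_H$, so $N_G(H)$ acts through the quotient $W=N_G(H)/H$. By the definition of $U_H$, this action is free: if $nH\in W$ fixes a $K$-point $x$, then $n$ lies in the stabilizer of $x$, which is $H$. A free action of a finite group of invertible order, or indeed any free finite group action, on a smooth quasi-projective scheme gives an étale $W$-torsor $U_H\to U_H/W$. Smoothness therefore descends, and $U_H/N_G(H)$ is smooth. To prove freeness in the scheme-theoretic sense, i.e.\ that the inertia is trivial, I will use that the geometric stabilizers are trivial and that $|W|$ is invertible in $k$, so the group scheme $W$ is étale. This is the standard criterion.

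The main obstacle is the last claim, that $S/G\to U/G$ is a closed immersion, rather than merely that its image is closed. I will argue via invariants. On a $G$-stable affine open $\Spec A\subset U$, the subscheme $S$ is cut out by a $G$-stable ideal $I$, and $S/G=\Spec (A/I)^G$. Because $|G|$ is invertible, the Reynolds operator shows that taking invariants is exact, so $(A/I)^G=A^G/I^G$. The map $A^G\to(A/I)^G$ is therefore surjective, which makes $S/G\to U/G$ a closed immersion. Applying the same argument to each closed $G$-stable piece $G\cdot U_H$ shows that each $U_H/N_G(H)$ is a smooth closed subscheme of $U/G$.
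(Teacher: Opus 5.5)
Your proposal is correct and follows the paper's proof step by step. You get the decomposition of $S$ from $g\cdot U_H=U_{gHg^{-1}}$, the quotient from $G\cdot U_H\cong G\times^{N_G(H)}U_H$, smoothness from the free action of $N_G(H)/H$ on $U_H$ (a finite \'etale cover), and the closed immersion from exactness of $G$-invariants. Your extra details, such as the $G$-stable affine cover and scheme-theoretic freeness from trivial geometric stabilizers, are fine, although a constant group scheme is \'etale whether or not $|W|$ is invertible.
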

\begin{proof}
    The first identity follows from $g\cdot U_H= U_{gHg^{-1}}$ for $g\in G$ and thus $G\cdot U_H=\coprod_{H'\in[H]} U_{H'}$. Now notice that $N_G(H)$ preserves $U_H$ so that $G\cdot U_H=G\times^{N_G(H)} U_H$. Taking quotients gives $(G\cdot U_H)/G\cong U_H/N_G(H)$. Moreover, $H$ acts trivially on $U_H$ and the quotient group $N_G(H)/H$ acts freely on $U_H$ by the definition of $U_H$. So $U_H\to U_H/N_G(H)$ is finite \'etale and $U_H/N_G(H)$ is smooth. The closed immersion into $U/G$ follows because taking $G$-invariants is exact when $|G|$ is invertible in $k$.
\end{proof}

\subsection{Nisnevich tubular neighborhoods}
Next, we move on to describe the normal neighborhood of $U_H$ and $U_H/N_G(H)$ in $U$ and $U/G$ respectively. 

Since $|G|$ is invertible in $k$, so are the orders of its subgroups. Thus the map
\[
e_H=\frac{1}{|H|}\sum_{h\in H}h: V\to V
\]
is well-defined, $k$-linear, and idempotent: $e_H^2=e_H$. Put $M_H=\ker e_H$. Then
\[
V=V^H\oplus M_H
\]
is an $N_G(H)$-equivariant splitting. This describes the normal bundle of $V^H$ in $V$ and consequently describes the normal bundle of $U_H=U\cap V^H$ in $U$. More precisely, let $E_H=U_H\times M_H$ be the trivial bundle over $U_H$ with fiber $M_H$, and let $\phi_H: E_H\to V$ be given by $\phi_H(x,v)=x+v$. Then $\phi_H$ maps $\phi_H^{-1}(U)$ isomorphically to a Zariski open neighborhood of $U_H$ in $U$, and maps the zero section of $E_H$ isomorphically to $U_H$.

In order to pass this on to a Nisnevich neighborhood of $U_H/N_G(H)\subset U/G$, we need the following lemma.

\begin{lemma}\label{lem:invariants-completion}
    Let a finite group $G$ act on a ring $A$, and let $\mathfrak{a}\subset A$ be an ideal with stabilizer subgroup $N=\{g\in G: g\mathfrak{a}=\mathfrak{a}\}$. Assume that $B=A^G$ is Noetherian and $A$ is a finite $B$-module, as holds, for example, when $A$ is a finitely generated $k$-algebra. Suppose that the ideals $g\mathfrak{a}$, indexed by $gN\in G/N$, are pairwise comaximal, i.e.\ if $gN\neq hN$ then $g\mathfrak{a}+h\mathfrak{a}=A$. Denote
    \[
   \quad J=\bigcap_{gN\in G/N} g\mathfrak{a}, \quad I=J\cap B.
    \]
    Then the natural map $\widehat{B}_I\to (\widehat{A}_\mathfrak{a})^N$ is an isomorphism.
\end{lemma}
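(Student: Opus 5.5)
The plan is to compare two completions of $A$ and then take invariants. First, $I$-adic completion of the finite $B$-module $A$ agrees with $IA$-adic completion. Second, the $IA$-adic topology on $A$ coincides with the $J$-adic topology. Third, the Chinese remainder theorem splits the $J$-adic completion into a product over $G/N$, and $G$-invariants of that product are the $N$-invariants of the single factor at $\mathfrak{a}$.

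\textbf{Step 1: $I$-adic completion and invariants.} Since $B$ is Noetherian and $A$ is a finite $B$-module, we have $\widehat{A}_{IA}=A\otimes_B\widehat{B}_I$. Because $\widehat{B}_I$ is flat over $B$, and $A^G$ is the kernel of the finite map $A\to\prod_{g\in G}A$, $a\mapsto(ga-a)_g$, taking invariants commutes with this base change. This gives
\[(\widehat{A}_{IA})^G=\widehat{B}_I.\]

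\textbf{Step 2: $IA$ and $J$ define the same topology.} Clearly $IA\subseteq J$. For the converse I use an integrality argument. Take $x\in J$. The polynomial $\prod_{g\in G}(T-gx)$ has coefficients in $B$. Each coefficient is an elementary symmetric function in the $gx$, and each $gx$ lies in $J$ because $J$ is $G$-stable. So every coefficient lies in $J\cap B=I$, and $x^{|G|}\in IA$.

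Now $J$ is finitely generated, since $A$ is Noetherian as a finite algebra over the Noetherian ring $B$. Hence a fixed power $J^r$ lies in $IA$. So the $J$-adic and $IA$-adic topologies agree, and $\widehat{A}_{IA}=\widehat{A}_J$ compatibly with the $G$-action.

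\textbf{Step 3: splitting by CRT.} The ideals $g\mathfrak{a}$, for $gN\in G/N$, are pairwise comaximal. So their powers are comaximal as well, and $J^n=\prod_{gN}(g\mathfrak{a})^n=\bigcap_{gN}(g\mathfrak{a})^n$. The Chinese remainder theorem then gives $A/J^n\cong\prod_{gN}A/(g\mathfrak{a})^n$. Passing to the limit,
\[\widehat{A}_J\cong\prod_{gN\in G/N}\widehat{A}_{g\mathfrak{a}}.\]
Under this isomorphism $G$ permutes the factors: $g$ carries $\widehat{A}_{\mathfrak{a}}$ isomorphically onto $\widehat{A}_{g\mathfrak{a}}$, and $N$ stabilizes the factor $\widehat{A}_{\mathfrak{a}}$. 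This is the coinduced module $\mathrm{CoInd}_N^G\widehat{A}_\mathfrak{a}$. Projection onto the $\mathfrak{a}$-factor is therefore an isomorphism $(\widehat{A}_J)^G\cong(\widehat{A}_\mathfrak{a})^N$: a $G$-invariant element is determined by its $\mathfrak{a}$-component, and any $N$-invariant component extends uniquely by translation.

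Composing Steps 1–3 gives $\widehat{B}_I\cong(\widehat{A}_\mathfrak{a})^N$. The composite is the natural map, because each identification is induced by the inclusions $B\subset A$ and $A\to\widehat{A}_\mathfrak{a}$.

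The main obstacle is Step 2, the topology comparison. It needs the integrality trick together with Noetherianity of $A$ to pass from "each $x$ has a power in $IA$" to "some power of the ideal $J$ lies in $IA$". Noetherianity of $A$ is the one point that must be read off from the hypotheses rather than assumed.
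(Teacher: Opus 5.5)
Your proposal is correct and follows the paper's proof essentially step for step. It builds the same chain $\widehat{B}_I\simeq(\widehat{A}_{J})^G\simeq\bigl(\prod_{gN}\widehat{A}_{g\mathfrak{a}}\bigr)^G\simeq(\widehat{A}_{\mathfrak a})^N$ from the same three ingredients: the norm-polynomial trick giving $\sqrt{IA}=\sqrt{J}$, the identification $\widehat{A}_{IA}\simeq A\otimes_B\widehat{B}_I$ with flat base change commuting with invariants, and the Chinese remainder theorem on the powers $(g\mathfrak a)^n$. You also spell out a step the paper leaves implicit: since $A$ is Noetherian (finite over Noetherian $B$), ``every element of $J$ has a power in $IA$'' upgrades to $J^r\subseteq IA$, which is exactly what the comparison of topologies needs.
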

\begin{proof}
    We prove the following sequence of natural isomorphisms:
    \[
    \widehat{B}_I\simeq (\widehat{A}_J)^G\simeq (\prod_{gN\in G/N}\widehat{A}_{g\mathfrak{a}})^G\simeq (\widehat{A}_\mathfrak{a})^N.
    \]
    The third isomorphism is given by the map 
    \[(\prod_{gN\in G/N}\widehat{A}_{g\mathfrak{a}})^G\to (\widehat{A}_\mathfrak{a})^N,\quad (x_{gN})_{gN\in G/N}\mapsto x_N,\]
    whose inverse is $x\mapsto (g\cdot x)_{gN\in G/N}$. The second isomorphism follows from the Chinese remainder theorem applied to comaximal ideals $(g\mathfrak{a})^n$.
    
    For the first isomorphism, we claim the equality of $A$-ideals $\sqrt{IA}=\sqrt{J}$. Indeed, it is clear that $IA\subset J$ because $I\subset J$ and $J$ is an ideal in $A$. On the other hand, for $x\in J$, consider the polynomial with $G$-invariant coefficients:
    \[
    P_x(T)=\prod_{g\in G}(T-g\cdot x)=T^{|G|}+c_1 T^{|G|-1}+\dots+c_{|G|}.
    \]
    Since $J$ is $G$-stable, all the non-leading coefficients belong to $J\cap A^G=I$. Hence $P_x(x)=0$ implies that $x^{|G|}\in IA$. This proves that $J\subset \sqrt{IA}$ and consequently $\sqrt{J}=\sqrt{IA}$.
    
 Therefore the $J$- and $IA$-adic topologies on $A$ are equivalent, implying that $\widehat{A}_J\simeq \widehat{A}_{IA}$. Since $A$ is a finite module over the Noetherian ring $B$, $\widehat{A}_{IA}$ is isomorphic to $A\otimes_B \widehat{B}_I$. Finally, since taking invariants is an equalizer and $\hat{B}_I$ is flat over the Noetherian ring $B$, one has $(A\otimes_B \widehat{B}_I)^G\simeq A^G\otimes_B \widehat{B}_I=\widehat{B}_I$. Combining these isomorphisms gives $(\widehat{A}_J)^G\simeq \widehat{B}_I$ as desired.
\end{proof}

In geometric terms, we have the following.
\begin{corollary}\label{cor:geometric-completion}
Let $q\colon X\to X/G$ be a finite scheme-theoretic quotient of a finite-type $k$-scheme by a finite group $G$, and let $Z\subset X$ be a closed subscheme with stabilizer subgroup $N=\{g\in G: gZ=Z\}$ of order $|N|$ invertible in $k$. Suppose that the distinct translates $gZ$ for $gN\in G/N$ are pairwise disjoint, i.e. $gZ\times_X hZ=\varnothing$ for $gN\neq hN$.

Then the natural morphism $Z/N\to X/G$ is a closed immersion, and there is a canonical isomorphism of formal schemes
\[
\widehat{X/G}_{\,Z/N}
\simeq
(\widehat{X}_{\,Z})/N.
\]
Here the formal quotient on the right is affine locally defined by taking $N$-invariants of the completed coordinate ring.
\end{corollary}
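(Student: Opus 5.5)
The plan is to reduce to the affine case and read off both assertions from \Cref{lem:invariants-completion}. Since $q\colon X\to X/G$ is a finite scheme-theoretic quotient, $X/G$ is covered by affine opens $\Spec B$ whose preimages $\Spec A$ are $G$-stable affine opens of $X$, with $B=A^G$. Both the closed-immersion property and the completion isomorphism are local on $X/G$. The identifications I construct will be canonical, hence compatible with localization $B\to B_f$ for $f\in B$, so they glue. I therefore fix such an affine piece and let $\mathfrak a\subset A$ be the ideal of $Z\cap\Spec A$. The stabilizer of $\mathfrak a$ contains $N$. The disjointness hypothesis $gZ\times_X hZ=\varnothing$ for $gN\ne hN$ translates exactly into pairwise comaximality $g\mathfrak a+h\mathfrak a=A$. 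Where the restriction is nonempty, this forces the stabilizer of $\mathfrak a$ to be precisely $N$. Where $Z$ misses the affine piece, everything is empty and there is nothing to prove. Since $A$ is a finitely generated $k$-algebra, $B$ is Noetherian and $A$ is finite over $B$. So \Cref{lem:invariants-completion} applies with $J=\bigcap_{gN}g\mathfrak a$ and $I=J\cap B$, giving $\widehat B_I\simeq(\widehat A_{\mathfrak a})^N$.

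Next I identify $Z/N$ with the closed subscheme $V(I)\subset\Spec B$; this gives the closed immersion. Chinese remainder gives $A/J\simeq\prod_{gN\in G/N}A/g\mathfrak a$ as $G$-rings. Taking $G$-invariants, the same induction argument as in the third isomorphism of \Cref{lem:invariants-completion} gives $(A/J)^G\simeq(A/\mathfrak a)^N$. Exactness of $G$-invariants when $|G|$ is invertible, applied to $0\to J\to A\to A/J\to 0$, gives $(A/J)^G=B/I$. Hence $B/I\simeq(A/\mathfrak a)^N$, which is the coordinate ring of $Z/N$. The map $Z/N\to X/G$ factors as $Z/N\simeq\Spec B/I\hookrightarrow\Spec B$, so it is a closed immersion.

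This step needs care: \Cref{lem:invariants-completion} and the above invoke invertibility of $|G|$, while the corollary only states $|N|$ invertible and that $q$ is a quotient. I would first try to avoid exactness entirely. The map $B/I\to(A/\mathfrak a)^N$ is injective by definition of $I$, since $b\in B$ lies in $\mathfrak a$ iff it lies in every $g\mathfrak a$. For surjectivity, I would lift $\bar x\in(A/\mathfrak a)^N$ to an element of $A/J$ that is $G$-invariant via the product decomposition. I would then lift that to $A$ and average over $N$ only. Concretely, I would choose $y\in A$ with $y\equiv x$ mod $\mathfrak a$ and $y\equiv 0$ mod $g\mathfrak a$ for $g\notin N$, and form $\sum_{gN\in G/N}g\cdot\frac1{|N|}\sum_{n\in N}n y$. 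This is $G$-invariant and reduces to $\bar x$ mod $\mathfrak a$, and it only uses $1/|N|$. The same trick replaces the exactness step at each finite level $A/J^n$.

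Finally, the completion statement follows by passing to the limit. From the previous paragraph, $B/I^n$ and $(A/\mathfrak a^n)^N$ agree compatibly. Here I use $\sqrt{IA}=\sqrt J$ from the proof of \Cref{lem:invariants-completion} to see that the $I$-adic and $\mathfrak a$-adic systems are cofinal. The Noetherian hypothesis, via Artin--Rees, handles the comparison between $(\widehat A_{\mathfrak a})^N$ and $\varprojlim(A/\mathfrak a^n)^N$; invariants commute with inverse limits since they are an equalizer. This gives $\widehat B_I\simeq(\widehat A_{\mathfrak a})^N$ compatibly with localization, i.e.\ $\widehat{X/G}_{Z/N}\simeq(\widehat X_Z)/N$. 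The main obstacle is this bookkeeping of which group order must be invertible, together with the gluing. I expect the $N$-averaging construction to resolve the former.
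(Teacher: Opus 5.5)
Your argument is essentially the paper's: reduce to a $G$-stable affine $\Spec A$ over $\Spec B$, get $\ker\rho=B\cap\mathfrak a=I$ from $G$-invariance, and get surjectivity onto $(A/\mathfrak a)^N$ by the Chinese remainder theorem, averaging over $N$ only, and summing over $G/N$. You then take the completion isomorphism from \Cref{lem:invariants-completion} and glue; that lemma's proof in fact uses no invertibility of $|G|$, so your worry there is unnecessary. Your last paragraph is redundant given the lemma, and as literally written it conflates $B/I^n$ with $B/(B\cap\mathfrak a^n)\simeq(A/\mathfrak a^n)^N$: these two systems are only cofinal, and it is this $B$-side comparison, not the $A$-side one, that actually needs Artin--Rees.
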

\begin{proof}
The assertion is local on $Y=X/G$. Let $Q=\Spec(B)\subset Y$ be an affine open subscheme. Since $q$ is finite,
\[
q^{-1}(Q)=\Spec(A)
\]
is a $G$-stable affine open subscheme of $X$, and $B=A^G$. If $Z\times_{X} q^{-1}(Q)=\varnothing$ then there is nothing to prove; we therefore assume it is non-empty. Let $\mathfrak a\subset A$ be the ideal defining $Z\times_{X} q^{-1}(Q)$, and put
\[
J=\bigcap_{gN\in G/N}g\mathfrak a,
\quad
I=J\cap B.
\]
The disjointness assumption implies that the distinct translates $g\mathfrak a$ are pairwise comaximal. Therefore, the Chinese remainder theorem gives
\[
A/J\simeq \prod_{gN\in G/N}A/g\mathfrak{a}.
\]
Consider the composition
\[
\rho: B=A^G\hookrightarrow A\twoheadrightarrow A/J\twoheadrightarrow A/\mathfrak{a},
\]
in which the last map is the projection onto the factor indexed by $N\in G/N$.

We claim that $\rho$ maps $B$ surjectively onto $(A/\mathfrak{a})^N$ with kernel $I$. It then follows that $\Spec(B/I)$ is naturally identified with $(Z/N)\times_{X/G} Q$, proving that $Z/N\to X/G$ is a closed immersion. Furthermore, \Cref{lem:invariants-completion} gives $\widehat B_I\simeq(\widehat A_{\mathfrak a})^N$. These isomorphisms are natural under restriction to smaller affine open subschemes of $Y$, and therefore glue to the asserted isomorphism of formal completions.

It remains to prove the claim. To see that $\ker \rho=I$, it is clear that $\ker \rho=B\cap \mathfrak{a}$. If $x\in B\cap\mathfrak{a}$ then $x=g\cdot x\in g\mathfrak{a}$. Hence $x\in B\cap J=I$, showing that $B\cap \mathfrak{a}=I$. To see that $\rho$ takes $B$ onto $(A/\mathfrak{a})^N$, it is clear that the image of $\rho$ is contained in $(A/\mathfrak{a})^N$. Meanwhile, the Chinese remainder theorem gives, for each $\bar{x}\in (A/\mathfrak{a})^N$, an element $x\in A$ such that
\[
x\equiv \overline{x}\pmod {\mathfrak{a}}, \quad x\equiv 0\pmod {g\mathfrak{a}}, \ (gN\neq N).
\]
Notice that $(\sum_{n\in N} n\cdot x)/|N|$ is $N$-invariant and satisfies the equations above, so we may assume that $x$ is $N$-invariant.  Then $\sum_{gN\in G/N} g\cdot x$ is well-defined, $G$-invariant and is mapped onto $\bar{x}$ by $\rho$. The proof is now complete.
\end{proof}

Let us apply \Cref{cor:geometric-completion} to analyze the neighborhood of $U_H/N_G(H)$ in $U/G$.
\begin{lemma}[Nisnevich neighborhood]\label{lem:nisnevich-neighborhood}
    There exists a Zariski open neighborhood $T_H$ of $(U_H\times\{0\})/N_G(H)$ in $E_H/N_G(H)$ with an \'etale morphism $T_H\to U/G$ such that the inverse image of $U_H/N_G(H)$ is exactly $(U_H\times\{0\})/N_G(H)$.
\end{lemma}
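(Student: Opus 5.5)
Set $N=N_G(H)$ and $W=\phi_H^{-1}(U)\subset E_H$. This is an $N$-stable Zariski open subset containing the zero section $U_H\times\{0\}$, because $\phi_H$ is $N$-equivariant: $N$ preserves both $V^H$ and $M_H$, and $\phi_H$ is addition. The plan is to first shrink $W$ to an $N$-stable open $W'\supset U_H\times\{0\}$ such that the composite
\[
\psi\colon W'\xrightarrow{\ \phi_H\ } U\to U/G
\]
is, after passing to $N$-quotients, étale along the zero section. We then take $T_H$ to be a suitable open neighborhood of $(U_H\times\{0\})/N$ in $W'/N\subset E_H/N$. Since $\psi$ is $N$-invariant, it descends to $\bar\psi\colon W'/N\to U/G$.

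\emph{Étaleness along the zero section.} We use \Cref{cor:geometric-completion} twice.

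First, apply it to $X=U$ with its $G$-action and $Z=U_H$. The stabilizer of $U_H$ is $N$: indeed $gU_H=U_{gHg^{-1}}$, and these sets are disjoint for distinct conjugates by the disjointness noted earlier. The translates are pairwise disjoint for the same reason. This gives
\[
\widehat{(U/G)}_{U_H/N}\simeq (\widehat U_{U_H})/N.
\]

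Second, apply it to $X=W'$ with the $N$-action and $Z=U_H\times\{0\}$. Here the stabilizer is all of $N$, so there is only one translate. This gives
\[
\widehat{(W'/N)}_{U_H/N}\simeq(\widehat{W'}_{U_H\times 0})/N.
\]

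Since $\phi_H$ is an open immersion on $W$, it induces an $N$-equivariant isomorphism $\widehat{W'}_{U_H\times0}\simeq\widehat U_{U_H}$. Hence $\bar\psi$ induces an isomorphism of formal completions along $U_H/N$, which is smooth by the earlier lemma. An isomorphism of completions at every point of $U_H/N$ forces $\bar\psi$ to be étale at those points: flatness and unramifiedness can be checked on completed local rings, using that $\bar\psi$ is of finite type between Noetherian schemes and maps $U_H/N$ isomorphically onto its image. The étale locus of $\bar\psi$ is open. Let $T_1$ be this open locus; it contains $(U_H\times\{0\})/N$.

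\emph{Exact preimage.} This is the main obstacle: we must arrange $\bar\psi^{-1}(U_H/N)=(U_H\times\{0\})/N$.

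Upstairs, a point $(x,v)\in W$ maps into $G\cdot U_H=S\cap(G\cdot U_H)$ exactly when $x+v$ has stabilizer conjugate to $H$. The locus
\[
C=\phi_H^{-1}\bigl(G\cdot U_H\bigr)\subset W
\]
is closed, $N$-stable, and equals $\coprod_{H'\in[H]}\phi_H^{-1}(U_{H'})$. The component for $H'=H$ is $\phi_H^{-1}(V^H)\cap W=U_H\times\{0\}$, because the decomposition $V=V^H\oplus M_H$ forces $v=0$. For $H'\ne H$, the component $\phi_H^{-1}(U_{H'})$ is closed. It is also disjoint from $U_H\times\{0\}$, since $U_H\cap U_{H'}=\varnothing$.

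So we remove the closed set
\[
C'=\bigcup_{H'\in[H],\,H'\neq H}\phi_H^{-1}(U_{H'}),
\]
which is $N$-stable because $N$ permutes the conjugates of $H$ other than $H$ itself. Its image in $W/N$ is closed since quotients by finite groups are universally closed, and it misses the zero section. Set
\[
T_H=T_1-\mathrm{im}(C').
\]
Then every point of $T_H$ mapping into $U_H/N$ comes from $U_H\times\{0\}$, so $\bar\psi^{-1}(U_H/N)\cap T_H=(U_H\times\{0\})/N$ set-theoretically.

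For the scheme-theoretic equality, note that the preimage is a closed subscheme of $T_H$ on which $\bar\psi$ is étale onto the reduced smooth scheme $U_H/N$. The preimage is therefore reduced, and so it coincides with $(U_H\times\{0\})/N$.
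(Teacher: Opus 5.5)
Your proof is correct. The étaleness half matches the paper's: you apply \Cref{cor:geometric-completion} twice, first to $G$ acting on $U$ with $Z=U_H$ and then to $N=N_G(H)$ acting on $W=\phi_H^{-1}(U)$ with the zero section, and then use the completed-local-ring criterion for étaleness. The auxiliary shrinking to $W'$ is never used, so you can take $W'=W$.

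You diverge from the paper at the exact-preimage step.

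\emph{The paper's route.} With $T'_H$ the étale neighborhood, put $F_H=T'_H\times_{U/G}(U_H/N)$. The isomorphism $Z/N\to U_H/N$ gives a section of the étale map $F_H\to U_H/N$, and such a section is an open immersion. Hence $F_H\setminus(Z/N)$ is closed in $T'_H$ and can be discarded, and the scheme-theoretic equality comes for free.

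\emph{Your route.} You compute the preimage explicitly. You show that $\psi^{-1}(G\cdot U_H)=\coprod_{H'\in[H]}\phi_H^{-1}(U_{H'})$, and that the $H'=H$ piece is exactly the zero section by the splitting $V=V^H\oplus M_H$. The remaining pieces form a closed $N$-stable set disjoint from the zero section, and you remove its image. You then pass from sets to schemes: the preimage is étale over the reduced scheme $U_H/N$, hence reduced, and it contains the reduced closed subscheme $(U_H\times\{0\})/N$ with the same support.

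\emph{Trade-off.} The paper's argument is shorter and uses only that $\bar\psi$ is étale and restricts to an isomorphism $Z/N\to U_H/N$. Yours needs the stabilizer bookkeeping and a separate reducedness argument. In return it says exactly what is thrown away: the points $(x,v)$ for which $x+v$ has stabilizer a conjugate $H'\neq H$. This is precisely the locus $\{9u^2=w\}$ removed in the $\symgrp_3$ example.
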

\begin{proof}
    If $U_H$ is empty, take $T_H=\varnothing$. We may therefore assume $U_H\ne\varnothing$. Recall that $\phi_H: E_H\to V$ takes $W:=\phi_H^{-1}(U)$ isomorphically onto an open neighborhood of $U_H$ in $U$ and maps $Z:=U_H\times\{0\}$ isomorphically onto $U_H$. In particular it identifies the completion of $W$ along $Z$ with the completion of $U$ along $U_H$. Write $N=N_G(H)$ for simplicity.

 Applying \Cref{cor:geometric-completion} to the $G$-action on $U$ and the closed subscheme $U_H$, we have
 \[
 \widehat{U/G}_{\, (U_H/N)}\simeq (\widehat{U}_{\,U_H})/N.
 \]
Similarly, applying \Cref{cor:geometric-completion} to the $N_G(H)$-action on $W$ and the closed subscheme $Z$, we have that
\[
\widehat{W/N}_{\, Z/N}\simeq (\widehat{W}_{\,Z})/N.
\]
It follows that the map
    \[
    f_H: W/N\to U/G
    \]
    induced by $\phi_H|_W$ induces an isomorphism of formal completions
\[
\widehat{W/N}_{\,Z/N}\simeq\widehat{U/G}_{\,U_H/N}.
\]
At corresponding points of $Z/N$ and $U_H/N$, this gives isomorphic completed local rings and residue fields. Since $f_H$ is of finite type between Noetherian schemes, the criterion for \'etaleness in terms of completed local rings shows that $f_H$ is \'etale along $Z/N$; see \cite[Proposition 17.6.3]{grothendieck1967egaiv4}. It is therefore \'etale on an open neighborhood $T_H'\supset Z/N$.
     
     Finally we shrink $T_H'$ to obtain the desired neighborhood as follows. Consider $F_H:=T_H'\times_{U/G}(U_H/N)$. The morphism $F_H\to U_H/N$ is \'etale, and the isomorphism $f_H|_{Z/N}: Z/N\xrightarrow{\sim} U_H/N$ defines a section whose image is $Z/N$. Since a section of an \'etale morphism is an open immersion, $Z/N$ is open in $F_H$. Moreover, $F_H$ is closed in $T'_H$. Hence $F_H\setminus(Z/N)$ is closed in $T'_H$. Define
\[
T_H=T'_H\setminus(F_H\setminus(Z/N)).
\]
Then $f_H|_{T_H}$ is \'etale and $T_H\times_{U/G}(U_H/N)=Z/N$.
    \end{proof}

\subsection{Good stratifications}
    
Now we are ready to state and prove the main theorem of this section. A \textit{Nisnevich distinguished square} is a pullback diagram
\begin{equation}
		\begin{tikzcd}
			p^{-1}(U)\ar[r]\ar[d] & V\ar[d,"p"]\\
			U\ar[r,"j"] & X
		\end{tikzcd}
\end{equation}
in which $p$ is an \'etale morphism, $j$ is an open immersion and $p^{-1}(X-U)\to X-U$ is an isomorphism. If $p$ is an open immersion, then the last condition becomes the requirement $U\cup V=X$. In this case, such a square is called a \textit{Zariski distinguished square}.
\begin{definition}[good stratification]\label{def:good-stratification}
	Let $X$ be a $k$-scheme. We define recursively what it means for $X$ to admit a \textit{good stratification of length at most $r$}. Length zero means that $X$ is smooth or empty. For $r>0$, it means either that $X$ admits a good stratification of length at most $r-1$, or that the following data are given:
	\begin{enumerate}
		\item a Nisnevich distinguished square of $k$-schemes
		\begin{equation}\label{eq:good-stratification-nisnevich-square}
		\begin{tikzcd}
			T^\circ\ar[r,"i"]\ar[d] & T\ar[d]\\
			X^\circ\ar[r] & X
		\end{tikzcd}
		\end{equation}
		in which $X^\circ$ admits a good stratification of length at most $r-1$;
		\item a Zariski distinguished square of $k$-schemes
		\begin{equation}\label{eq:good-stratification-zariski-square}
		\begin{tikzcd}
			T^\circ\ar[r,"i"]\ar[d] & T\ar[d]\\
			E^\circ\ar[r] & E
		\end{tikzcd}
		\end{equation}
		in which $E^\circ$ admits a good stratification of length at most $r-1$, $Y=E-E^\circ$ is smooth, and there is an $\aff^1$-deformation retraction of $E$ onto $Y$.
	\end{enumerate}
\end{definition}

\begin{theorem}[existence of good stratification]\label{thm:good-stratification-existence}
    Let $G$ be a finite group of order $|G|$ invertible in $k$, and let $V$ be a finite-dimensional $k$-vector space with a linear $G$-action. Let
    \[
    U=V-\bigcup_{L\in \cL} L
    \]
    be the complement of a finite $G$-stable collection of proper linear subspaces. Then $U/G$ admits a good stratification.
\end{theorem}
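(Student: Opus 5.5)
We argue by induction on a complexity measure of the pair $(U,G)$. The natural choice is the number of nonempty strata $U_H$, i.e.\ the number of subgroups $H\le G$ with $U_H\neq\varnothing$; this is finite. If $U=\varnothing$, or if $m=1$ (so $G$ acts freely and $U/G$ is smooth, since $U\to U/G$ is then finite \'etale), the length is zero. Otherwise, let $S=\coprod_{[H]\in\Conj_m(G)}G\cdot U_H$ be the new stratum. It is smooth and closed in $U$, and $S/G\simeq\coprod U_H/N_G(H)$ is a smooth closed subscheme of $U/G$. Set $U'=U-S$. Each $U_H=U\cap V^H$, so
\[
U'=V-\Bigl(\bigcup_{L\in\cL}L\cup\bigcup_{|H|=m}V^H\Bigr)
\]
is again the complement of a finite $G$-stable collection of proper linear subspaces. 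Here $V^H\neq V$ when $m>1$, after first reducing to $G$ acting faithfully by replacing $G$ with its image.

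The strata of $U'$ are the $U'_K$, which either vanish or are contained in $U_K$, and the strata with $|K|=m$ disappear. So the count decreases, and induction gives a good stratification of $U'/G$. Likewise, each $E_H^\circ:=(U_H\times(M_H-\{0\}))/N_G(H)$ is a quotient of the same type, with group $N=N_G(H)$ acting on $V^H\oplus M_H$ and removed subspaces $\cL$, the $L$'s, and $V^H\times\{0\}$. Its stabilizers are all of order less than $m$, since any point with a nonzero $M_H$-component is not fixed by $H$. Hence $E_H^\circ$ also has smaller complexity, though for a different group, so the induction must be run over all pairs $(G,V,\cL)$ simultaneously with a measure comparable across groups. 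I would use $m$ together with the number of nonempty strata, ordered lexicographically. $U'$ has smaller measure, and $E^\circ_H$ has a smaller maximal stabilizer order, so both are covered.

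For the squares, take $X=U/G$, $X^\circ=U'/G=X-S/G$, $T=\coprod_H T_H$ from \Cref{lem:nisnevich-neighborhood}, and $T^\circ=T\times_X X^\circ$. Since $T\to X$ is \'etale and its preimage of $S/G$ maps isomorphically onto $S/G$, this is a Nisnevich distinguished square. Next take $E=\coprod_H E_H/N_G(H)$ and $E^\circ=\coprod_H E^\circ_H$, which is the complement of the zero section $Y=\coprod_H (U_H\times\{0\})/N_G(H)\simeq S/G$, smooth by the earlier lemma. $T$ is Zariski open in $E$ and contains $Y$, and $T^\circ=T\cap E^\circ$ because the preimage of $S/G$ in $T$ is exactly the zero section. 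So $T\cup E^\circ=E$ and the second square is Zariski distinguished. The scaling $(x,v,t)\mapsto(x,tv)$ on $U_H\times M_H\times\aff^1$ is $N_G(H)$-equivariant because the splitting is, so it descends to an $\aff^1$-deformation retraction of $E$ onto $Y$. Descent holds because quotients by finite groups with invertible order commute with the flat base change $\times\aff^1$.

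The step most likely to cause trouble is the induction bookkeeping: arranging a single well-founded measure that decreases both for $U'$ under the same $G$ and for $E^\circ_H$ under the smaller group $N_G(H)$, and checking that each appears with a uniformly bounded length. I would first try the lexicographic pair (maximal stabilizer order, number of strata of that order) and verify it carefully.
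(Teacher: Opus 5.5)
Your proposal is correct and is essentially the paper's proof. It uses the same Nisnevich square with $T=\coprod_H T_H$ and $X^\circ=(U-S)/G$, the same Zariski square with $E=\coprod_H E_H/N_G(H)$ and its fiberwise scaling, and the same simultaneous induction over all groups. The paper inducts on $m$ alone, which suffices because every point of $U-S$ already has stabilizer of order $<m$, so the second component of your lexicographic measure is never used.

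A few small points to tighten. First, the Nisnevich-neighborhood lemma only says that the preimage of $U_H/N_G(H)$ in $T_H$ is the zero section. To get the preimage of all of $S/G$ in $\coprod_H T_H$ equal to $Y$, repeat its final shrinking step with $S/G$ in place of $U_H/N_G(H)$, as the paper does. Second, the stabilizer bound on $E_H^\circ$ needs $\Stab_{N_G(H)}(x,v)\subseteq\Stab_{N_G(H)}(x)=H$, not only $v\notin M_H^H=0$. Third, the subspaces removed from $V^H\oplus M_H$ to obtain $E_H^\circ$ are $(L\cap V^H)\oplus M_H$ for $L\in\cL$ together with $V^H\oplus 0$, not the $L$ themselves. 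Finally, $E^\circ=\coprod_H E_H^\circ$ needs the remark that finite disjoint unions of good stratifications are good, after raising the length bounds to a common value.
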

\begin{proof} We argue simultaneously for all finite groups by induction on the maximal geometric stabilizer order $m$. If $U$ is empty there is nothing to prove. If $m=1$, the action is free, so $U\to U/G$ is finite \'etale and $U/G$ is smooth. Assume $m>1$ and follow the notation in the preceding discussion. Taking the coproduct of the maps constructed in \Cref{lem:nisnevich-neighborhood} over $\Conj_m(G)$ yields an \'etale morphism
\[
f: T'=\coprod_H T_H\to U/G,
\]
whose restriction to $Y=\coprod_H \left((U_H\times\{0\})/N_G(H)\right)$ is an isomorphism $Y\xrightarrow{\simeq} S/G$. The last part of the proof of \Cref{lem:nisnevich-neighborhood} shows that one can further shrink $T'$ to obtain a neighborhood $T$ of $Y$ such that the preimage of $S/G$ in $T$ is exactly $Y$. Then taking
\[
T^\circ=T-Y, \quad X=U/G, \quad X^\circ=(U-S)/G,
\]
gives the required square in \Cref{eq:good-stratification-nisnevich-square}.
Meanwhile, taking
\[
E=\coprod_H E_H/N_G(H), \quad E^\circ= E-Y,
\]
gives the required square in \Cref{eq:good-stratification-zariski-square}.
Indeed, fiberwise scalar multiplication on $E_H=U_H\times M_H$ defines a morphism
\[
E_H\times\aff^1\to E_H, \, \left((x,v),t\right)\mapsto (x,tv).
\]
This morphism is $N_G(H)$-equivariant and descends to an $\aff^1$-deformation retraction of $E_H/N_G(H)$ onto $(U_H\times\{0\})/N_G(H)$. Taking the coproduct gives an $\aff^1$-deformation retraction of $E$ onto $Y$.

It remains to prove that $X^\circ$ and $E^\circ$ admit good stratifications. Notice that
\begin{align*}
	U^\circ&:=U-S= V-\bigcup_{L\in\cL} L-\bigcup_{|H|=m} V^H,\\
	E_H^\circ&:= E_H-(U_H\times\{0\})=V-\bigcup_{L\in\cL}\left((V^H\cap L)\oplus M_H\right)-V^H,
\end{align*}
When nonempty, these are complements of finite collections of proper linear subspaces stable under $G$ and $N_G(H)$, respectively. For $U^\circ$, all stabilizers have order less than $m$ by construction. At a geometric point $(x,v)$ of $E_H^\circ$, the stabilizer in $N_G(H)$ is contained in $\Stab_{N_G(H)}(x)=H$. Equality would imply $v\in M_H^H=0$, contradicting $v\ne0$. Thus these stabilizers also have order less than $m$. The induction hypothesis gives good stratifications of $X^\circ=U^\circ/G$ and each $E_H^\circ/N_G(H)$. Finite disjoint unions preserve good stratifications, by taking disjoint unions of their defining squares after enlarging their length bounds to a common bound. Hence $E^\circ$ also admits a good stratification, completing the induction.
\end{proof}

\begin{example}
Assume that $\operatorname{char}k\ne2,3$, and let $\symgrp_3$ act by permuting the coordinates on
\[
V=\{(x_1,x_2,x_3)\in\aff^3:x_1+x_2+x_3=0\}\simeq\aff^2.
\]
Writing $a=x_1x_2+x_1x_3+x_2x_3$ and $b=x_1x_2x_3$, we have $X=V/\symgrp_3\simeq\aff^2_{a,b}$. Put $D=-4a^3-27b^2$. The good stratification from the theorem is
\[
S_0=\{0\},\quad S_1=\{D=0\}-\{0\},\quad S_2=\{D\ne0\},
\]
with stabilizers $\symgrp_3$, conjugates of $\symgrp_2$, and the trivial group, respectively. The three reflection lines in $V$ map to the cuspidal discriminant curve in $X$, as shown in \Cref{fig:symmetric-group-stratification}. Although $X$ happens to be smooth, this finer stratification illustrates the construction above.

\begin{figure}[htbp]
\centering
\begin{tikzpicture}[x=0.85cm,y=0.85cm,>=stealth,font=\small]
  \fill[black!3] (-2.15,-1.6) rectangle (2.15,1.6);
  \draw[black!25] (-2.15,-1.6) rectangle (2.15,1.6);
  \draw[blue!65!black,thick] (-1.95,0)--(1.95,0);
  \draw[blue!65!black,thick] (-1.02,-1.4)--(1.02,1.4);
  \draw[blue!65!black,thick] (-1.02,1.4)--(1.02,-1.4);
  \fill[red!75!black] (0,0) circle (2pt);
  \node[above] at (0,1.82) {$V\simeq\aff^2$};
  \node[below,align=center] at (0,-1.78) {three reflection lines\\$x_i=x_j$};
  \draw[->,thick] (2.4,0)--(4.15,0) node[midway,above] {$V\to V/\symgrp_3$};
  \fill[black!3] (4.45,-1.6) rectangle (8.75,1.6);
  \draw[black!25] (4.45,-1.6) rectangle (8.75,1.6);
  \draw[->,black!35] (4.65,0)--(8.52,0) node[below] {$a$};
  \draw[->,black!35] (7.78,-1.42)--(7.78,1.42) node[right] {$b$};
  \draw[blue!65!black,thick,domain=-1.08:1.08,samples=81,variable=\t]
    plot ({7.78-2.35*\t*\t},{1.04*\t*\t*\t});
  \fill[red!75!black] (7.78,0) circle (2pt);
  \node[above] at (6.6,1.82) {$X\simeq\aff^2$};
  \node[blue!65!black] at (5.25,0.83) {$S_1$};
  \node[red!75!black,above right] at (7.78,0.02) {$S_0$};
  \node at (6.67,1.13) {$S_2$};
  \node[below,align=center] at (6.6,-1.78) {discriminant curve\\$4a^3+27b^2=0$};
\end{tikzpicture}
\caption{The good stratification on $\mathbf{A}^2/\symgrp_3$.}
\label{fig:symmetric-group-stratification}
\end{figure}

At $S_0$, take $T=E=X$, with contraction $(a,b,t)\mapsto(t^2a,t^3b)$. For $S_1\subset X-S_0$, set
\[
E=\mathbf G_{m,u}\times\aff^1_w,\quad Y=\{w=0\},\quad
T=\{9u^2-w\ne0\}\subset E.
\]
The morphism
\[
f:T\longrightarrow X-S_0,\quad (u,w)\longmapsto(-3u^2-w,-2u^3+2uw)
\]
is \'etale and identifies $f^{-1}(S_1)=Y$ with $S_1$. Taking $X^\circ=S_2$, $T^\circ=T-Y$, and $E^\circ=E-Y$ therefore gives the two distinguished squares of \Cref{def:good-stratification}. Both $X^\circ$ and $E^\circ$ are smooth, and $(u,w,t)\mapsto(u,t^2w)$ contracts $E$ onto $Y$.
\end{example}

\section{Voevodsky's change-of-category comparison}\label{sec:change-of-category}
Throughout this section, $k$ is a perfect field and all schemes are separated and of finite type over $k$. The category of separated $k$-schemes of finite type will be denoted by $\sch/k$. Let $\sm/k$, $\smqp/k$ and $\snqp/k$ respectively be the full subcategories of $\sch/k$ of smooth schemes, smooth quasi-projective schemes and seminormal quasi-projective schemes over $k$. There are canonical embeddings
\[
j: \smqp/k\hookrightarrow \sm/k,\quad i: \smqp/k\hookrightarrow\snqp/k.
\]
The embedding $j$ is well-behaved in the sense that it induces an equivalence between the categories of sheaves in any topology which is at least as strong as the Zariski one. As a consequence, the motivic homotopy categories $H_{\nis,\aff^1}((\sm/k)_+)$ and $H_{\nis,\aff^1}((\smqp/k)_+)$ (see \Cref{subsec:comparison-preliminaries} for the definition) are equivalent.

A key step in Voevodsky's work \cite{voevodsky2010motivic} on motivic Eilenberg--MacLane spaces is to show \cite[Theorem 1.21]{voevodsky2010motivic} that $i$ is also well-behaved: the right adjoints defined by $i$ commute with reduced motive functors on homotopy categories. This then allows him to transfer information obtained in $\snqp/k$ to $\smqp/k$ and then to $\sm/k$. The proof of \cite[Theorem 1.21]{voevodsky2010motivic} is the crucial place where resolution of singularities is used.

The goal of this section is to prove, without using resolution of singularities, a weaker version (\Cref{thm:transfer-comparison}) of \cite[Theorem 1.21]{voevodsky2010motivic} that is sufficient for our applications.

\subsection{Preliminaries}\label{subsec:comparison-preliminaries}
As we will frequently cite results from \cite{voevodsky2010motivic}, we decide to closely follow its notation so that the reader can easily compare.

Let $C$ be an admissible category, i.e. a full subcategory of $\sch/k$ that contains $\Spec(k), \aff^1$ and the domain of every \'etale morphism with target in $C$, and such that $C$ is closed under finite products and finite coproducts. In particular, $C$ contains every affine space, every open subscheme of one of its objects, and every connected component of one of its objects.

Denote by $C_+$ the category of schemes $X_+=X\amalg\Spec(k)$ with $X\in C$ and basepoint-preserving morphisms, and by $\cor(C,R)$ the category of finite correspondences over $C$ with coefficients in a commutative unital ring $R$.

The functor $[-]_R: C\to \cor(C,R)$ that is the identity on objects and takes morphisms to their graphs factors through the functor $(-)_+: C\to C_+$ by adjoining a disjoint basepoint. We denote the corresponding functor $C_+\to \cor(C,R)$ by $\Lambda_R$; in particular, $\Lambda_R(X_+)=[X]_R$. This functor commutes with finite coproducts and therefore defines a pair of adjoint functors:
\[
\Lambda_R^l: \Rad(C_+)\leftrightarrows \Rad(\cor(C,R)): \Lambda_R^r
\]
on the associated categories of radditive presheaves of sets.

Since $\cor(C,R)$ is an $R$-linear additive category, a radditive presheaf of sets may be identified with a presheaf of $R$-modules; therefore the category $\Rad(\cor(C,R))$ is the category of \textit{presheaves with transfers} and $\Lambda^r_R$ forgets the transfers.

Let $i: C\to D$ be an embedding of admissible categories, e.g. $i: \smqp/k\hookrightarrow\snqp/k$. The commutative square
\[
\begin{tikzcd}
	C_+\ar[r,"i_+"]\ar[d,"\Lambda_R"] & D_+\ar[d,"\Lambda_R"] \\
	\cor(C,R)\ar[r,"i_{\tr}"] & \cor(D,R)
\end{tikzcd}
\]
yields the following square
\begin{equation}
	\begin{tikzcd}
	H_{\nis,\aff^1}(D_+)\ar[r,"i_{+,\rad}"]\ar[d,"\LL\Lambda^l_R"] & H_{\nis,\aff^1}(C_+)\ar[d,"\LL\Lambda^l_R"] \\
	H_{\nis,\aff^1}(\cor(D,R))\ar[r,"i_{\tr,\rad}"] & H_{\nis,\aff^1}(\cor(C,R))
\end{tikzcd}
\end{equation}
equipped with a natural transformation
\begin{equation}
	\theta: \LL\Lambda^l_R i_{+,\rad}\to i_{\tr,\rad}\LL\Lambda^l_R.
\end{equation}
Here $H_{\nis,\aff^1}(-)$ means the corresponding homotopy category constructed by localizing the category of simplicial radditive presheaves $\Delta^{\op}\Rad(-)$ first with respect to projective equivalences (i.e.\ pointwise simplicial weak equivalences) and then with respect to Nisnevich--$\aff^1$ equivalences. The notation $\LL(-)$ means the corresponding total left derived functor.

\begin{definition}[$\theta$-compatible]
	We say $Y\in H_{\nis,\aff^1}(D_+)$ is $\theta$-compatible if $\theta_Y$ is an isomorphism. For $X\in D$, we say $X$ is $\theta$-compatible if $\theta_{X_+}$ is an isomorphism.
\end{definition}

The rest of this section is devoted to proving that $U/G$ as in \Cref{thm:good-stratification-existence} is $\theta$-compatible. For this, we need to establish some useful properties of $\theta$.

\subsection{Properties of $\theta$}
An object $Y$ of $H_{\nis,\aff^1}(\cC_+)$, for an admissible category $\cC$, is a pointed \textit{motivic $\cC$-space}, and $\LL\Lambda^l_{R}Y\in H_{\nis,\aff^1}(\cor(\cC,R))$ is called its \textit{reduced $R$-motive}. Thus $\LL\Lambda^l i_{+,\rad}$ takes the reduced motive after restriction, whereas $i_{\tr,\rad}\LL\Lambda^l$ restricts the reduced motive. The natural transformation $\theta:\LL\Lambda^l i_{+,\rad}\to i_{\tr,\rad}\LL\Lambda^l$ compares these constructions.

\begin{lemma}\label{lem:comparison-hocolimits}
	The functors $\LL\Lambda^l i_{+,\rad}$ and $i_{\tr,\rad}\LL\Lambda^l$ preserve small homotopy colimits, compatibly with $\theta$.
\end{lemma}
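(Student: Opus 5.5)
The plan is to show that each of the four functors involved — $i_{+,\rad}$, $i_{\tr,\rad}$, and the two copies of $\LL\Lambda^l_R$ — preserves small homotopy colimits. The composites then preserve them too. Compatibility with $\theta$ follows from naturality: $\theta$ is a natural transformation of functors defined on the underlying model categories of simplicial radditive presheaves, so it commutes with the canonical comparison maps $\hocolim F(Y_\alpha)\to F(\hocolim Y_\alpha)$.

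\emph{Left derived functors.} $\LL\Lambda^l_R$ is the total left derived functor of a left Quillen functor. Indeed, $\Lambda^l_R$ is a left adjoint on simplicial radditive presheaves, and it sends representables $X_+$ to representables $[X]_R$, hence projective cofibrations to projective cofibrations. It also carries the generating Nisnevich--$\aff^1$ local equivalences (distinguished squares and $X\times\aff^1\to X$) to local equivalences in $H_{\nis,\aff^1}(\cor(-,R))$, because it commutes with the representable functors on these diagrams. The total left derived functor of a left Quillen functor between the localized model structures preserves homotopy colimits. This uses only general facts about Quillen adjunctions and Bousfield localizations.

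\emph{Restriction functors.} The functors $i_{+,\rad}$ and $i_{\tr,\rad}$ are restrictions of presheaves along $i_+$ and $i_{\tr}$. On simplicial presheaves, restriction preserves all colimits and pointwise weak equivalences, so it preserves homotopy colimits in the projective structure. The real issue is that restriction must be taken after Nisnevich--$\aff^1$ localization. I would argue that restriction preserves Nisnevich--$\aff^1$ local equivalences. Its right adjoint (right Kan extension along $i$) preserves local objects: for $\aff^1$-invariance this is because $C$ is closed under products with $\aff^1$, and for Nisnevich excision because a Nisnevich square in $D$ whose corner lies in $C$ has all corners in $C$, by admissibility (closure under étale domains and open subschemes). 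Consequently restriction also has a derived left adjoint, given by left Kan extension, which preserves cofibrations and the generating local equivalences since $i$ sends representables and distinguished squares to the same. So the derived restriction is simultaneously a left adjoint on homotopy categories and computed pointwise. A cleaner route is simply to note that restriction preserves local equivalences and computes homotopy colimits pointwise; I would verify that it sends local equivalences to local equivalences by checking on generators together with a 2-out-of-3 and colimit-closure argument.

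\emph{Main obstacle.} The main difficulty is this last point for $i_{\tr,\rad}$, with transfers: a Nisnevich square in $D$ should restrict to one in $C$, and the representable $[X]_R$ with $X\in D\setminus C$ must restrict to something whose localization behaves well. I expect to handle it by showing that restriction commutes with the explicit localization functor (Nisnevich sheafification followed by the Suslin complex $C_*$), both of which are computed from sections over objects of the form $X\times\Delta^n$ and Nisnevich covers of such objects. Both of these stay inside $C$ when $X\in C$. Hence restriction commutes with fibrant replacement, preserves local equivalences, and therefore preserves homotopy colimits.
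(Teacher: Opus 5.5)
Your proposal follows essentially the same route as the paper. $\LL\Lambda^l_R$ preserves homotopy colimits as a derived left Quillen functor. The restrictions preserve projective homotopy colimits objectwise, and they descend to $H_{\nis,\aff^1}$ because they preserve Nisnevich--$\aff^1$ equivalences; the paper simply cites this from Voevodsky's Corollary~1.20 and Theorem~A.16, which amounts to your generators-plus-closure route. Compatibility with $\theta$ is naturality.

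One caution: the explicit-localization idea in your last paragraph is not right as stated. No one-step formula such as ``sheafify, then apply $C_*$'' gives a fibrant replacement on an admissible category like $\snqp/k$ containing singular schemes (even on smooth schemes one needs $C_*$ followed by Nisnevich fibrant replacement, together with Voevodsky's homotopy-invariance theorem). So for $i_{\tr,\rad}$, rely on the generator check, or on an iterated composite of $C_*$ and Nisnevich fibrant replacement, which does commute with restriction.
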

\begin{proof}
By \cite[Theorem~1.7]{voevodsky2010motivic}, the functors $\LL\Lambda^l_R$, for $\cC=C,D$, are derived left adjoints. They therefore preserve small homotopy colimits by the general result for left Quillen functors \cite[Theorem~19.4.5(1)]{hirschhorn2003model}. It remains to show the restrictions $i_{+,\rad}$ and $i_{\tr,\rad}$ also preserve small homotopy colimits.

In the projective homotopy categories $H(-)$ of simplicial radditive presheaves $\Delta^{\op}\Rad(-)$, small homotopy colimits are computed using coproducts and bisimplicial diagonals \cite[Definition~18.1.2 and Theorems~15.11.6,~15.11.11]{hirschhorn2003model}. Both restrictions commute with these operations and hence preserve these homotopy colimits.

Meanwhile, both restrictions preserve Nisnevich--$\aff^1$ equivalences \cite[Corollary~1.20 and Theorem~A.16]{voevodsky2010motivic}, so their induced functors on $H_{\nis,\aff^1}(-)$ commute with localization. The localization functors preserve homotopy colimits as derived left Quillen functors \cite[Appendix~A.3, pp.~92--93]{voevodsky2010motivic}, so both restrictions also preserve small homotopy colimits in $H_{\nis,\aff^1}(-)$. This proves the assertion for $\LL\Lambda^l i_{+,\rad}$ and $i_{\tr,\rad}\LL\Lambda^l$; compatibility with $\theta$ follows from naturality.
\end{proof}

\begin{lemma}\label{lem:comparison-properties}
	Let $k$ be a perfect field and $C\subset\sm/k$. The following hold.
	\begin{enumerate}
		\item If $X\in D$ is smooth, then $X$ is $\theta$-compatible.
		\item (two-out-of-three) Let $X\to Y\to Z$ be a homotopy cofiber sequence in $H_{\nis,\aff^1}(D_+)$. If two of $X,Y,Z$ are $\theta$-compatible, then so is the third.
		\item ($\aff^1$-invariance) If $Y\to X$ is an $\aff^1$-homotopy equivalence in $D$, then $X$ is $\theta$-compatible if and only if $Y$ is.
		\item (Nisnevich excision) Suppose we have a Nisnevich distinguished square
		\[
		\begin{tikzcd}
			Y^\circ\ar[r,"i"]\ar[d] & Y\ar[d,"p"]\\
			X^\circ\ar[r] & X
		\end{tikzcd}
		\]
		in $D$. Then $X/X^\circ$ is $\theta$-compatible if and only if $Y/Y^\circ$ is.
	\end{enumerate}
\end{lemma}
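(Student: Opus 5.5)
We prove the four assertions in order. Each one reduces, through \Cref{lem:comparison-hocolimits} and the definition of $\theta$ as a natural transformation between two functors, to a statement about representables or about homotopy colimits.

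\emph{Part (1).} Let $X\in D$ be smooth. Since $C\subset\sm/k$ is admissible and $D$ contains $X$, we first want to know that $i_{+,\rad}X_+$ is computed in $H_{\nis,\aff^1}(C_+)$ by the representable (or by its restriction), and that $\LL\Lambda^l_R$ of a representable $X_+$ is $[X]_R$ in both categories. If $X\in C$, the derived functor $\LL\Lambda^l$ sends representables to representables, and restriction also preserves representables. Then $\theta_{X_+}$ is the identity map of the presheaf with transfers $[X]_R|_C$, so it is an isomorphism.

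If $X$ is smooth but not in $C$ (for example, $X$ is smooth but not quasi-projective when $C=\smqp/k$), we cover $X$ by affine opens, which lie in $C$. We then use a Čech/Mayer--Vietoris hypercover. Both functors preserve homotopy colimits compatibly with $\theta$ by \Cref{lem:comparison-hocolimits}, and Zariski squares are homotopy pushouts in both $H_{\nis,\aff^1}$ categories. Together these reduce the claim to the case $X\in C$. I expect this to be a short step, modulo citing Voevodsky's computation of $\LL\Lambda^l$ on representables.

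\emph{Part (2).} A homotopy cofiber sequence $X\to Y\to Z$ exhibits $Z$ as the homotopy pushout of $*\leftarrow X\to Y$. By \Cref{lem:comparison-hocolimits}, both functors carry it to a homotopy cofiber sequence, and $\theta$ gives a map between them. Both target categories $H_{\nis,\aff^1}(\cor(C,R))$ are homotopy categories of simplicial $R$-module objects, so they are stable enough that cofiber sequences are also fiber sequences; concretely, they give long exact sequences of sheaf homology after applying $\pi_*$ of fibrant replacements. The five lemma then shows that if two of the components of $\theta$ are isomorphisms, so is the third. Treating the case where $X$ and $Z$ are compatible needs this cofiber$=$fiber property, not just the preservation of pushouts. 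I would justify it using the additive (abelian-group-valued) structure of $\Rad(\cor(-,R))$.

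\emph{Part (3).} An $\aff^1$-homotopy equivalence $Y\to X$ becomes an isomorphism in $H_{\nis,\aff^1}(D_+)$. Naturality of $\theta$ then identifies $\theta_{X_+}$ with $\theta_{Y_+}$ up to composition with isomorphisms.

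\emph{Part (4).} Here $X/X^\circ$ means the homotopy cofiber of $X^\circ_+\to X_+$. The distinguished square is a homotopy pushout in $H_{\nis,\aff^1}(D_+)$; this is the Nisnevich excision built into the localization. So the induced map $Y/Y^\circ\to X/X^\circ$ is an isomorphism in $H_{\nis,\aff^1}(D_+)$, and naturality of $\theta$ finishes the proof as in part (3).

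The main obstacle is part (1): checking that restriction $i_{+,\rad}$ of a representable, and $\LL\Lambda^l$ of it, are correctly identified in the homotopy categories over $C$. In particular, we must justify that the smooth $X$ is represented on $C$ by the Zariski hocolimit of its affine pieces. I would handle this using \cite[Corollary~1.20]{voevodsky2010motivic}, which states that the restriction preserves Nisnevich local equivalences.
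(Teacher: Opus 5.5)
Your arguments for (1), (3) and (4) match the paper's. For $U\in C$ the map $\theta_{U_+}$ is the identity. A general smooth $X$ is reduced to that case via the Čech nerve of a Zariski cover and \Cref{lem:comparison-hocolimits}. Parts (3) and (4) follow from naturality of $\theta$ at maps that are already isomorphisms in $H_{\nis,\aff^1}(D_+)$. One small fix: for a general admissible $C\subset\sm/k$, an arbitrary affine open of $X$ need not lie in $C$. Use instead opens that admit étale maps to affine spaces; these, and their finite intersections, lie in $C$ by admissibility.

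The genuine gap is in (2), and it is (2), not (1), that carries the substantive input. You correctly see that preservation of homotopy pushouts is not enough when $X$ or $Y$ is the unknown term, but additivity does not fill this in. Additivity plus the Puppe sequence only gives the case ``$X,Y$ compatible $\Rightarrow$ $Z$ compatible''. That case follows from the five lemma, since $\theta_{\Sigma X}$ and $\theta_{\Sigma Y}$ are isomorphisms by \Cref{lem:comparison-hocolimits}.

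The case ``$X,Z\Rightarrow Y$'' is the one actually used. It gives $X_+$ from $X^\circ_+$ and $X/X^\circ$ in \Cref{thm:transfer-comparison}, $\Sym^d$ from $\Sym^{d-1}$ and $\widetilde{\Sym}^d$ in \Cref{lem:comparison-symmetric-powers}, and $X\times Y$ in \Cref{cor:comparison-tate-products}. It needs cofiber sequences in $H_{\nis,\aff^1}(\cor(C,R))$ to extend to the left, i.e.\ to behave like distinguished triangles.

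That is a property of the $\aff^1$-localization, not of additivity. For connective complexes, the simplicial suspension of an $\aff^1$-local object need not be $\aff^1$-local. For instance, if $F$ is a homotopy invariant sheaf with transfers in degree $0$, locality of $F[1]$ requires $\aff^1$-invariance of $H^1_{\nis}(-,F)$. So an $\aff^1$-fibrant replacement need not turn a cofiber sequence into one with a long exact sequence of homology sheaves, and your ``LES after fibrant replacement'' step is unjustified.

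The missing ingredient is Voevodsky's theorem that homotopy invariant presheaves with transfers over a perfect field have homotopy invariant Nisnevich cohomology. The paper uses it in the form \cite[Theorem~1.15 and Proposition~1.17]{voevodsky2010motivic}: $H_{\nis,\aff^1}(\cor(C,R))$ embeds fully faithfully into the triangulated category $\dm^{\eff}_-$, carrying cofiber sequences to distinguished triangles. This is exactly where the hypotheses ``$k$ perfect'' and ``$C\subset\sm/k$'' enter, and your proposal never uses them. With that citation replacing the additivity argument, two-out-of-three for morphisms of distinguished triangles finishes (2), as in the paper.
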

\begin{proof}
For (1), first let $U\in C$. Restricting $U_+$ from $D$ to $C$ gives the pointed presheaf represented by the same scheme $U$. Applying $\LL\Lambda_R^l$ then gives the presheaf with transfers $V\mapsto\cor(C,R)(V,U)$, for $V\in C$; the same presheaf is obtained by first applying $\LL\Lambda_R^l$ over $D$ and then restricting to $C$. Under these identifications, $\theta_{U_+}$ is the identity, so $U$ is $\theta$-compatible.

For an arbitrary smooth $X\in D$, choose a finite Zariski open cover of $X$ whose members admit \'etale maps to affine spaces; these opens and their finite intersections belong to $C$ by admissibility. Let $U_\bullet$ be the \v{C}ech nerve of this cover: each $U_n$ is a finite disjoint union of intersections of the covering opens, so $U_n\in C$. The augmentation $U_\bullet\to X$ is a Zariski, hence Nisnevich, local equivalence \cite[Section~2.1, Lemma~1.15]{morel1999homotopy}, giving
\[
X_+\simeq\hocolim\nolimits_{[n]\in\Delta^{\op}}(U_n)_+
\quad\text{in }H_{\nis,\aff^1}(D_+).
\]
Each $U_n$ is $\theta$-compatible, so \Cref{lem:comparison-hocolimits} shows that $X$ is also $\theta$-compatible.

For (2), by \Cref{lem:comparison-hocolimits}, applying $\LL\Lambda^l i_{+,\rad}$ and $i_{\tr,\rad}\LL\Lambda^l$ to the given cofiber sequence, we obtain two cofiber sequences in $H_{\nis,\aff^1}(\cor(C,R))$. The natural transformation $\theta$ gives a morphism between them. By \cite[Theorem 1.15 and Proposition 1.17]{voevodsky2010motivic}, there exists a fully faithful embedding of $H_{\nis,\aff^1}(\cor(C,R))$ into a triangulated category, under which cofiber sequences become distinguished triangles. Two-out-of-three for morphisms of distinguished triangles in a triangulated category therefore proves (2). Finally, (3) and (4) follow from naturality of $\theta$, since $Y_+\to X_+$ in (3) and the Nisnevich excision map $Y/Y^\circ\to X/X^\circ$ in (4) are isomorphisms in $H_{\nis,\aff^1}(D_+)$.
\end{proof}

In most applications below, we apply two-out-of-three to the cofiber sequence
\[
U_+\to X_+\to X/U,
\]
in which $X\in D$ and $U\hookrightarrow X$ is an open subscheme.

\subsection{Good stratification implies $\theta$-compatibility}
\Cref{thm:transfer-comparison} is now a formal consequence of \Cref{lem:comparison-properties}.
\begin{theorem}\label{thm:transfer-comparison}
	Let $k$ be a perfect field, and $C\subset\sm/k$. Suppose that $X\in D$ admits a good stratification in $D$, namely all the data of the good stratification are contained in $D$. Then $X$ is $\theta$-compatible, i.e.
	\[
	\theta_{X_+}: \LL\Lambda^l_R i_{+,\rad}(X_+)\to i_{\tr,\rad}\LL\Lambda^l_R(X_+)
	\]
	is an isomorphism.
\end{theorem}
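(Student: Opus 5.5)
We argue by induction on the length $r$ of the good stratification. If $r=0$, then $X$ is smooth or empty. In the smooth case $\theta$-compatibility is \Cref{lem:comparison-properties}(1). In the empty case $X_+$ is the basepoint, and both sides of $\theta$ are trivial.

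For $r>0$, suppose $X$ comes with the data of \Cref{def:good-stratification}, and that every object of $D$ admitting a good stratification in $D$ of length at most $r-1$ is $\theta$-compatible. Then $X^\circ$ and $E^\circ$ are $\theta$-compatible by the inductive hypothesis. We proceed in four steps.

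\textbf{Step 1: reduction to $X/X^\circ$.} Apply two-out-of-three (\Cref{lem:comparison-properties}(2)) to the cofiber sequence $X^\circ_+\to X_+\to X/X^\circ$. Since $X^\circ$ is $\theta$-compatible, it suffices to show that $X/X^\circ$ is $\theta$-compatible.

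\textbf{Step 2: Nisnevich excision.} By Nisnevich excision (\Cref{lem:comparison-properties}(4)) applied to the square \eqref{eq:good-stratification-nisnevich-square}, this is equivalent to $\theta$-compatibility of $T/T^\circ$.

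\textbf{Step 3: Zariski excision.} The square \eqref{eq:good-stratification-zariski-square} is a Zariski distinguished square, hence in particular Nisnevich distinguished, with $T\to E$ an open immersion. A second application of \Cref{lem:comparison-properties}(4) reduces us to $\theta$-compatibility of $E/E^\circ$.

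\textbf{Step 4: the cofiber $E/E^\circ$.} Use the cofiber sequence $E^\circ_+\to E_+\to E/E^\circ$ together with two-out-of-three. Here $E^\circ$ is $\theta$-compatible by induction. For $E$, note that the $\aff^1$-deformation retraction makes the inclusion $Y\hookrightarrow E$ an $\aff^1$-homotopy equivalence in $D$. Since $Y=E-E^\circ$ is smooth, it is $\theta$-compatible by (1), and (3) then gives that $E$ is $\theta$-compatible. So $E/E^\circ$ is $\theta$-compatible, and running Steps 3, 2 and 1 backwards proves the claim for $X$.

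Most of this is bookkeeping; the points needing care are as follows.

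First, every object and morphism appearing must lie in $D$, so that the lemmas apply in $H_{\nis,\aff^1}(D_+)$. The hypothesis that all data of the stratification lie in $D$ covers this, and closed subschemes such as $Y$ are included among those data.

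Second, the two quotients $X/X^\circ$ and $T/T^\circ$ in the excision step must be the cofibers of the open immersions, and the excision map must be the one induced by the square. This is exactly how \Cref{lem:comparison-properties}(4) is phrased.

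The step I expect to require the most care is the $\aff^1$-invariance for $E$. We must check that an $\aff^1$-deformation retraction $h\colon E\times\aff^1\to E$ onto $Y$ really gives an $\aff^1$-homotopy equivalence $Y_+\to E_+$ in $H_{\nis,\aff^1}(D_+)$. This needs $E\times\aff^1\in D$, which holds since $D$ is closed under products and contains $\aff^1$. It also needs the standard fact that $\aff^1$-homotopic maps become equal after $\aff^1$-localization. Granting these, $r\colon E\to Y$ (the endpoint of $h$) is inverse to the inclusion in the homotopy category, and the induction closes.
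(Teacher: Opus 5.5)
Your proof is correct and is essentially the paper's argument: induction on the length, with $E$ handled by $\aff^1$-invariance from the smooth $Y=E-E^\circ$, then two-out-of-three for $E/E^\circ$, excision to pass to $T/T^\circ$ and then $X/X^\circ$ (the Zariski square being a special Nisnevich square), and a final two-out-of-three for $X$. The only differences are presentational: you run the chain as a reduction from $X$ down to $E/E^\circ$ rather than upward, and you spell out small points the paper leaves implicit, namely that $Y\in D$ and $E\times\aff^1\in D$.
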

\begin{proof}
	Apply induction on the length of the good stratification. In the base case $X$ is smooth and therefore $\theta$-compatible. In general, following the notation of \Cref{def:good-stratification}, $X^\circ$ and $E^\circ$ are $\theta$-compatible by the induction hypothesis. Since $E-E^\circ$ is smooth and $E$ is $\aff^1$-equivalent to $E-E^\circ$, $E$ is $\theta$-compatible. Then by two-out-of-three $E/E^\circ$ is $\theta$-compatible. By Nisnevich excision $T/T^\circ$ and then $X/X^\circ$ are $\theta$-compatible. Finally, again by two-out-of-three, $X$ is $\theta$-compatible.
\end{proof}

Recall that an admissible category is \textit{$f$-admissible} if it is in addition closed under the formation of quotients with respect to actions of finite groups. For example, $\snqp/k$ and $\qp/k$ (quasi-projective $k$-schemes) are $f$-admissible.
\begin{corollary}\label{cor:comparison-finite-linear-quotients} Let $k$ be a perfect field and $i\colon C\hookrightarrow D$ an inclusion of admissible categories with $C\subset\sm/k$ and $D$ $f$-admissible. Under the assumptions of \Cref{thm:good-stratification-existence}, $U/G$ is $\theta$-compatible for every coefficient ring $R$.
\end{corollary}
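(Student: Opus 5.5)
The plan is to deduce the corollary from \Cref{thm:transfer-comparison}. The one thing to check is that the good stratification produced in the proof of \Cref{thm:good-stratification-existence} lives entirely in $D$: every scheme appearing in the two distinguished squares at every level of the recursion must be an object of $D$, and every morphism between them must be a morphism of $D$. Once that is done, \Cref{thm:transfer-comparison} gives that $\theta_{(U/G)_+}$ is an isomorphism for any coefficient ring $R$. Nothing in the argument depends on $R$, because \Cref{lem:comparison-properties} holds for arbitrary $R$.

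I would verify membership in $D$ by retracing the induction in the proof of \Cref{thm:good-stratification-existence}. I would strengthen the inductive statement to: \emph{for every finite group $G$ of invertible order and every $U$ as there, $U/G$ admits a good stratification all of whose data lie in $D$.} The schemes to check at each step are the following.
\begin{enumerate}
\item $X=U/G$ and $X^\circ=(U-S)/G$. The open subschemes $U,U-S\subset V=\aff^n$ lie in $D$, since $D$ is admissible and hence contains affine spaces and their open subschemes. Their quotients lie in $D$ because $D$ is $f$-admissible.
\item $E_H=U_H\times M_H$. Here $U_H$ is open in the linear space $V^H$, and $M_H$ is linear, so $E_H$ is open in an affine space and lies in $D$. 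Then $E_H/N_G(H)$ lies in $D$ by $f$-admissibility, and the finite coproduct $E$ lies in $D$ by closure under coproducts.
\item The remaining pieces. $T$ is an open subscheme of $E$, so it lies in $D$. The schemes $T^\circ$, $E^\circ$, $Y$ are open in $E$ or $T$; for $Y$, it is a coproduct of smooth quotients $U_H/N_G(H)$, each a quotient of an open subscheme of affine space, so it lies in $D$ either way.
\item $E^\circ_H/N_G(H)$ and $X^\circ$. These are again finite linear quotients of the same type, so the strengthened induction hypothesis applies to them.
\end{enumerate}
The $\aff^1$-deformation retraction of $E$ onto $Y$ uses $E\times\aff^1\in D$, which holds by closure under finite products. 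The étale map $T\to X$ is a morphism between objects of $D$ and is automatically in $D$, since $D$ is a full subcategory.

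Two points need some care. First, the definition of good stratification requires $T\to X$ to be étale, and $D$ must contain the domain of every étale morphism to one of its objects; here we already know $T\in D$ directly, so this causes no difficulty. Second, the quotients in $D$ must agree with the scheme-theoretic quotients used in \S2. For $f$-admissible categories such as $\qp/k$ and $\snqp/k$ this holds: quotients of quasi-projective schemes by finite groups exist and are quasi-projective. For $\snqp/k$ one also needs $U/G$ to be seminormal; it is in fact normal, being the quotient of a normal scheme. I expect this identification of the quotient in $D$ with the geometric quotient to be the only real subtlety. Everything else is bookkeeping.

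Finally, apply \Cref{thm:transfer-comparison} with $X=U/G$ and the hypothesis $C\subset\sm/k$. The theorem's proof uses \Cref{lem:comparison-properties}(1) on the smooth pieces $Y$, $E^\circ$ and $X^\circ$ at the base of the recursion, two-out-of-three, $\aff^1$-invariance for $E\simeq Y$, and Nisnevich excision. All of these hold for every $R$, which proves the corollary.
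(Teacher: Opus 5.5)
Your proposal is correct and follows the same route as the paper. You check that every scheme and morphism in the recursive construction of \Cref{thm:good-stratification-existence} lies in $D$: affine spaces and their open subschemes by admissibility, finite group quotients by $f$-admissibility, and finite products and coproducts by closure. Then you apply \Cref{thm:transfer-comparison}, and your extra remarks on $Y$, on $E\times\aff^1$, and on identifying quotients in $D$ with scheme-theoretic quotients are more explicit versions of the paper's short bookkeeping argument.
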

\begin{proof}
Admissibility puts every affine space in $D$, hence also the vector spaces $V$, $V^H$, and $M_H$. The schemes $U$ and $U_H=U\cap V^H$ are open subschemes of $V$ and $V^H$, respectively, so they lie in $D$, as does $E_H=U_H\times M_H$. All remaining steps in the construction of \Cref{thm:good-stratification-existence}, including its recursive stages, use open subschemes, finite products, finite disjoint unions, and finite group quotients. The $f$-admissibility of $D$ therefore keeps every auxiliary scheme in $D$. Thus $U/G$ admits a good stratification in $D$, and \Cref{thm:transfer-comparison} applies.
\end{proof}

\section{Motivic Eilenberg--MacLane spaces}\label{sec:motivic-em}
Throughout this section, let $k$ be a field of characteristic zero and let $i:C\hookrightarrow D$ be an inclusion of admissible categories with $C\subset\sm/k$ and $D$ $f$-admissible. 

The goal of this section is to prove that the motivic Eilenberg--MacLane space $K(A,2n,n)$ is $\theta$-compatible for an arbitrary abelian group $A$. Our strategy is to use the results of \Cref{sec:change-of-category} to show that all finite symmetric powers of $S^{2n,n}=\aff^n/(\aff^n-\{0\})$ are $\theta$-compatible. Then we deduce from the motivic Dold--Thom theorem that $K(\ZZ,2n,n)$ is $\theta$-compatible. For finitely generated $A$, we use a simplicial resolution by finitely generated free abelian groups, and then pass to arbitrary $A$ by filtered colimits.

\subsection{Symmetric powers}\label{subsec:symmetric-powers} Since $D$ is $f$-admissible, for $X\in D$ and $d\ge 0$, the $d$-fold symmetric power and the reduced $d$-fold symmetric power
\begin{align*}
    \Sym^d(X_+)&:=(X_+)^d/\symgrp_d,\\
    \widetilde{\Sym}^d(X_+)&:=(X_+)^{\wedge d}/\symgrp_d=(X^d/\symgrp_d)_+
\end{align*}
are well-defined in $D_+$. For example, for small values of $d$, one has
\begin{align*}
    &\Sym^0(X_+)=\Spec(k), \quad\widetilde{\Sym}^0(X_+)=\Spec(k)_+\\
    &\Sym^1(X_+)=\widetilde{\Sym}^1(X_+)=X_+.
\end{align*}
They induce derived functors
\[
\LL\Sym^d,\ \LL\widetilde{\Sym}^d: H_{\nis,\aff^1}(D_+)\to H_{\nis,\aff^1}(D_+).
\]

\begin{lemma}\label{lem:symmetric-power-cofiber}
	For every $X\in H_{\nis,\aff^1}(D_+)$ and $d\ge 1$, there is a homotopy cofiber sequence
	\[
	\LL\Sym^{d-1}(X)\to \LL\Sym^d(X)\to \LL\widetilde{\Sym}^d(X).
	\]
	The first map is induced by adjoining a basepoint.
\end{lemma}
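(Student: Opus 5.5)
The plan is to prove the statement first for representable pointed objects $X=Y_+$ with $Y\in D_+$ a pointed scheme, then for simplicial objects degreewise, and finally pass to $H_{\nis,\aff^1}(D_+)$ using that $\LL\Sym^d$ and $\LL\widetilde{\Sym}^d$ are computed on cofibrant replacements, which may be taken degreewise to be coproducts of representables. Since homotopy cofiber sequences of simplicial presheaves can be checked degreewise when the maps involved are degreewise cofibrations (monomorphisms), it suffices to produce, naturally in a pointed scheme $(Y,y_0)$, a closed embedding $\Sym^{d-1}(Y)\hookrightarrow \Sym^d(Y)$ whose quotient is $\widetilde{\Sym}^d(Y)$. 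By a pointed scheme we mean an object of $D_+$, and the smash-type notation in the definition of $\widetilde{\Sym}^d$ is interpreted accordingly.

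For the geometric step, write $Y=Y'\amalg\{*\}$ when $Y=X'_+$. Then $Y^d$ decomposes as a disjoint union indexed by the set of coordinates equal to $*$. Taking $\symgrp_d$-quotients, $\Sym^d(X'_+)=\coprod_{j=0}^d \Sym^j X'$, and the map from $\Sym^{d-1}$ given by adjoining the basepoint is the inclusion of the components with $j\le d-1$. This is an inclusion of a union of connected components, so its cofiber in presheaves is $(\Sym^d X')_+=\widetilde{\Sym}^d(X'_+)$.

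For a general pointed $Y$, and for the simplicial objects arising from cofibrant replacements, one uses instead the closed subscheme $\Sym^{d-1}Y\subset\Sym^dY$ of tuples containing the basepoint. Its quotient presheaf is the quotient of $Y^{\wedge d}/\symgrp_d$. The key check here is that the quotient of representable presheaves agrees with the representable of the scheme quotient, at least after Nisnevich sheafification and $\aff^1$-localization. I would restrict to the free case $Y=X'_+$, where the map is a summand inclusion and this issue disappears, noting that projective cofibrant objects are retracts of degreewise free ones.

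Finally, I would show that the degreewise cofiber sequences assemble into a homotopy cofiber sequence. For a cofibrant simplicial $X$ with $X_n=(X'_n)_+$, applying $\Sym^{d-1}\to\Sym^d\to\widetilde{\Sym}^d$ degreewise gives a degreewise monomorphism with degreewise cofiber. The realization of a levelwise cofiber sequence of monomorphisms is a homotopy cofiber sequence in the projective structure, and the localization to $H_{\nis,\aff^1}$ preserves homotopy cofiber sequences (Appendix A.3 of \cite{voevodsky2010motivic}). The main obstacle is making sure that $\LL\Sym^d$ is indeed computed by the degreewise functor on such cofibrant replacements, i.e.\ that $\Sym^d$ preserves the relevant weak equivalences between cofibrant objects. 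Here I would invoke Voevodsky's construction of the derived symmetric powers in \cite{voevodsky2010motivic} rather than reprove it.
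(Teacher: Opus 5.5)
Your argument is essentially the paper's. Both replace $X$ by a simplicial object that is termwise a coproduct of representables; the paper uses Voevodsky's functorial resolution $L_*\cX$ and his Corollary~2.11 to know that $\Sym^d$ of it computes $\LL\Sym^d$. Both then note that $\Sym^{d-1}\to\Sym^d\to\widetilde{\Sym}^d$ is a termwise coprojection sequence: the paper cites Voevodsky's Lemma~2.21, while you verify it directly via $\Sym^d(X'_+)=\coprod_{j\le d}\Sym^jX'$. Finally, both pass from the projective homotopy category to $H_{\nis,\aff^1}$ because the localization preserves cofiber sequences.

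Your detour through a ``general pointed $Y$'' is unnecessary. Every object of $D_+$ is already of the form $X'_+$, so the free case is the only one that occurs.
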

\begin{proof}
Choose a representative $\cX\in \Delta^{\op}\Rad(D_+)$ of $X$. The resolution of \cite[Proposition A.12(1)]{voevodsky2010motivic} gives a functorial projective equivalence
\[
L_*\cX\to\cX,
\]
in which each term of $L_*\cX$ is a coproduct of representable presheaves; in particular, $L_*\cX\in\Delta^{\op}D_+^\#$ where $D_+^\#$ is the full subcategory of $\Rad(D_+)$ consisting of filtered colimits of representable presheaves. Then by \cite[Corollary 2.11]{voevodsky2010motivic}, $\Sym^d(L_*\cX)$ and $\widetilde{\Sym}^d(L_*\cX)$ respectively represent $\LL\Sym^d(X)$ and $\LL\widetilde{\Sym}^d(X)$ for $d\ge 0$.

Now \cite[Lemma 2.21]{voevodsky2010motivic} gives a termwise coprojection sequence in $\Delta^{\op} D_+^\#$
\[
\Sym^{d-1}(L_*\cX)\to\Sym^d(L_*\cX)\to\widetilde{\Sym}^d(L_*\cX).
\]
By \cite[Proposition A.18(1)]{voevodsky2010motivic}, this yields a cofiber sequence in the homotopy category of $\Delta^{\op}\Rad(D_+)$ with respect to the projective model structure. The asserted cofiber sequence follows from the fact that the Nisnevich--$\aff^1$-localization is a left Bousfield localization (see \cite[Appendix~A.3, pp.~92--93]{voevodsky2010motivic}) and therefore preserves cofiber sequences, see e.g. \cite[Proposition 6.4.1]{hovey1999model}.
\end{proof}


\begin{lemma}\label{lem:comparison-symmetric-powers} $\LL\widetilde{\Sym}^d(S^{2n,n})$ and $\LL\Sym^d(S^{2n,n})$ are $\theta$-compatible for all $n, d\ge 0$.
\end{lemma}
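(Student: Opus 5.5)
We prove both assertions together by induction on $d$, using the cofiber sequence of \Cref{lem:symmetric-power-cofiber} and two-out-of-three (\Cref{lem:comparison-properties}(2)). The base cases are immediate: $\LL\Sym^0(S^{2n,n})=\Spec(k)$ and $\LL\widetilde{\Sym}^0(S^{2n,n})=\Spec(k)_+$ are represented by smooth schemes, hence $\theta$-compatible by \Cref{lem:comparison-properties}(1). (Strictly, $\Spec(k)$ here is $\Spec(k)_+$ with a basepoint collapsed, i.e.\ the point object, whose reduced motive is zero on both sides.) The case $n=0$ is also trivial: $S^{0,0}=\Spec(k)_+$, so all symmetric powers are points or $\Spec(k)_+$. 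Thus we assume $n\ge1$.

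By the cofiber sequence
\[
\LL\Sym^{d-1}(S^{2n,n})\to\LL\Sym^d(S^{2n,n})\to\LL\widetilde{\Sym}^d(S^{2n,n}),
\]
it suffices to show that $\LL\widetilde{\Sym}^d(S^{2n,n})$ is $\theta$-compatible for every $d\ge1$. Induction on $d$ then gives the statement for $\LL\Sym^d$.

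To handle $\widetilde{\Sym}^d$, we compute it from a concrete model. $S^{2n,n}$ is the cofiber of $(\aff^n-\{0\})_+\to(\aff^n)_+$, represented by the pushout of representables. The reduced $d$-th symmetric power of a quotient $X/U$, with $U\subset X$ open, should be
\[
(X^d/\symgrp_d)\big/\bigl((X^d-(X-U)^d)/\symgrp_d\bigr).
\]
This is the step to check against Voevodsky's results: we would invoke his computation of $\LL\widetilde{\Sym}^d$ on cofibers of open embeddings, namely $\widetilde{\Sym}^d(X/U)\simeq \widetilde{\Sym}^d(X_+)/\text{(image of the fat wedge)}$ via \cite[Corollary~2.11]{voevodsky2010motivic} applied to the cofibrant model $L_*$. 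Here the fat-wedge image is exactly the open subscheme of $X^d/\symgrp_d$ of tuples with at least one coordinate in $U$.

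For $X=\aff^n$ and $U=\aff^n-\{0\}$, this gives
\[
\LL\widetilde{\Sym}^d(S^{2n,n})\simeq \bigl((\aff^n)^d/\symgrp_d\bigr)\big/\bigl(((\aff^n)^d-\{0\})/\symgrp_d\bigr).
\]
This identification is the main obstacle, since it requires matching the derived reduced symmetric power of a homotopy cofiber with a scheme-theoretic quotient. I would first try to realize $S^{2n,n}$ as the simplicial presheaf given by the pushout along the cofibration $U_+\to X_+$. I would then show that $\widetilde{\Sym}^d$ of this pushout is the quotient of representable presheaves $(X^d/\symgrp_d)_+$ by the subpresheaf generated by the fat-wedge image, up to projective equivalence. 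Its homotopy cofiber identifies with the quotient of schemes described above.

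Granting this, both $V/\symgrp_d$ and $U/\symgrp_d$ are $\theta$-compatible by \Cref{cor:comparison-finite-linear-quotients}. Here $V=(\aff^n)^d$ carries the permutation action, $U=V-\{0\}$, and $\{0\}$ is a $\symgrp_d$-stable proper linear subspace. The order $d!$ is invertible because $\operatorname{char}k=0$, and both quotients lie in $D$ since $D$ is $f$-admissible. Applying two-out-of-three to the cofiber sequence $(U/\symgrp_d)_+\to(V/\symgrp_d)_+\to (V/\symgrp_d)/(U/\symgrp_d)$ then shows that $\LL\widetilde{\Sym}^d(S^{2n,n})$ is $\theta$-compatible, which completes the induction.
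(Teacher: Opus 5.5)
Your outline is the same as the paper's proof. The case $d=0$ is trivial, $\LL\Sym^d$ reduces to $\LL\widetilde{\Sym}^d$ by induction along \Cref{lem:symmetric-power-cofiber}, $\LL\widetilde{\Sym}^d(S^{2n,n})$ is identified with $\bigl((\aff^n)^d/\symgrp_d\bigr)/\bigl(((\aff^n)^d-\{0\})/\symgrp_d\bigr)$, and \Cref{cor:comparison-finite-linear-quotients} plus two-out-of-three finish the argument. The gap is that this identification, the only step with real content, is left as ``granting this''. Moreover, the route you sketch for it does not work as stated.

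Two things go wrong.

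First, the presheaf image of the fat wedge $\bigcup_i X^{i}\times U\times X^{d-1-i}$ in $X^d/\symgrp_d$ is not the open subscheme $(X^d-(X-U)^d)/\symgrp_d$, not even Nisnevich-locally. Take $d=2$, $n=1$, and the section $(s,p)=(0,-u)$ of $\Sym^2\aff^1\cong\aff^2_{s,p}$ over $\mathbf{G}_m=\Spec k[u^{\pm1}]$, i.e.\ the conjugate pair $\{\pm\sqrt{u}\}$. It lies in $\bigl((\aff^1)^2-\{0\}\bigr)/\symgrp_2$. But it does not lift to $(\aff^1)^2$, hence not to the fat wedge, even at the generic point, since $u$ is not a square in $k(u)$. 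What $\widetilde{\Sym}^d$ of the mapping cone of $U_+\to X_+$ actually collapses is generated by the images of $\Sym^{d-i}(X)\times\Sym^{i}(U)$ for $i\ge1$, and these do account for such points.

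Second, even this subpresheaf agrees with the representable presheaf of the open subscheme only Nisnevich-locally, not objectwise. For example, the section $(s,p)=(1,t)$ over $\aff^1_t$ avoids the origin but is not in these images on $\aff^1_t$-points. So ``up to projective equivalence'' is the wrong target. The comparison holds for the associated Nisnevich sheaves, because a relative $0$-cycle over a henselian local base splits into its parts over $U$ and over $X-U$.

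This is exactly what the paper imports from Voevodsky. The model $\aff^n/(\aff^n-\{0\})$ is solid, so \cite[Corollary~2.11]{voevodsky2010motivic} gives $\LL\widetilde{\Sym}^d(S^{2n,n})=\widetilde{\Sym}^d(S^{2n,n})$ on that quotient model, with no derived correction. Then \cite[Proposition~2.12]{voevodsky2010motivic} identifies its associated Nisnevich sheaf with $\widetilde{\Sym}^d(\aff^n_+)/\bigl(\widetilde{\Sym}^d(\aff^n_+)-\widetilde{\Sym}^d(\{0\}_+)\bigr)$, which suffices in $H_{\nis,\aff^1}(D_+)$.

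A harmless slip: for $n=0$, $\Sym^d(\Spec(k)_+)$ is a pointed disjoint union of $d+1$ points rather than a point or $\Spec(k)_+$. It is still smooth, so the trivial case goes through.
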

\begin{proof}
	For $d=0$, the two spaces are $\Spec(k)_+$ and $\Spec(k)$, respectively, and the assertion is immediate. Assume $d\ge1$. Since $S^{2n,n}=\aff^n/(\aff^n-\{0\})$ is obviously solid in the sense of \cite[Definition 2.4]{voevodsky2010motivic}, \cite[Corollary 2.11]{voevodsky2010motivic} applies to give
	\[
	\LL\widetilde{\Sym}^d(S^{2n,n})=\widetilde{\Sym}^d(S^{2n,n}).
	\]
	Then \cite[Proposition 2.12]{voevodsky2010motivic} (see also \cite[p. 4]{voevodsky2010motivic}) shows that the associated Nisnevich sheaves satisfy
	\begin{align*}
		\widetilde{\Sym}^d(S^{2n,n})&=\widetilde{\Sym}^d(\aff^n_+)/\left(\widetilde{\Sym}^d(\aff^n_+)-\widetilde{\Sym}^d(\{0\}_+)\right)\\
		&=\bigl((\aff^n)^d/\symgrp_d\bigr)\Big/\bigl(((\aff^n)^d-\{0\})/\symgrp_d\bigr).
	\end{align*}
	Now by \Cref{cor:comparison-finite-linear-quotients} both $\left((\aff^n)^d-\{0\}\right)/\symgrp_d$ and $(\aff^n)^d/\symgrp_d$ are $\theta$-compatible, and hence so is $\LL\widetilde{\Sym}^d(S^{2n,n})$ by two-out-of-three. The assertion for $\LL\Sym^d(S^{2n,n})$ follows from two-out-of-three by induction using \Cref{lem:symmetric-power-cofiber}.
\end{proof}

\subsection{Split proper Tate motives}
We now take a necessary but brief detour to study split Tate motives, before we apply the results of \Cref{subsec:symmetric-powers} to study motivic Eilenberg--MacLane spaces.

For an admissible category $\cC$ and $n\ge 0$ put
\[
	R(n)[2n]_{\cC}:=\LL\Lambda^l_{R}(S^{2n,n}) \in H_{\nis,\aff^1}(\cor(\cC,R)),
\]
and write
\[
R(n)[2n+i]_{\cC}:=\Sigma^i R(n)[2n]_{\cC}
\]
for $i\ge0$, where $\Sigma$ is the simplicial suspension. 

Following the terminology of \cite[Definition 2.60]{voevodsky2010motivic}, an object in $H_{\nis,\aff^1}(\cor(\cC,R))$ is called \textit{split proper Tate} if it is isomorphic to a direct sum of objects $R(n)[m]_{\cC}$ with $n\ge0$ and $m\ge2n$. Denote this full subcategory by $\spt_\cC$. It is closed under direct sums, simplicial suspensions, and derived tensor products, as follows from the definition and
\[
R(n)[m]\otimes^{\LL}R(n')[m']\simeq R(n+n')[m+m'],
\]
which is a consequence of
\[
S^{2n,n}\wedge S^{2n',n'}\simeq S^{2(n+n'),n+n'}
\]
by \Cref{lem:reduced-motive-smash}.

\begin{lemma}[K\"unneth formula]\label{lem:reduced-motive-smash}
For $X,Y\in H_{\nis,\aff^1}(\cC_+)$, there is a natural isomorphism
\[
\LL\Lambda_R^l(X\wedge Y)\simeq
\LL\Lambda_R^l X\otimes^\LL\LL\Lambda_R^l Y.
\]
\end{lemma}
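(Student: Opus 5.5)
The plan is to reduce the Künneth formula to representable objects, where it holds by the definition of the tensor product on presheaves with transfers, and then extend to general objects by homotopy colimits.

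\emph{Step 1: define the tensor structures.} On $\Rad(\cor(\cC,R))$ the tensor product is the Day-convolution type structure determined by $[U]_R\otimes[V]_R=[U\times V]_R$, extended by colimits. On $\Rad(\cC_+)$ the smash product is determined on representables by $U_+\wedge V_+=(U\times V)_+$. For pointed schemes, $X\wedge Y$ is the cofiber of $X\vee Y\to X\times Y$.

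\emph{Step 2: the underived statement on representables.} Since $\Lambda_R(U_+)=[U]_R$, we get
\[
\Lambda_R^l\bigl((U\times V)_+\bigr)=[U\times V]_R=[U]_R\otimes[V]_R .
\]
So $\Lambda^l_R$ is strong monoidal on representables, naturally in $U$ and $V$. Both sides commute with colimits in each variable separately: $\Lambda^l_R$ is a left adjoint, and both $\wedge$ and $\otimes$ are defined by left Kan extension. Hence the isomorphism extends to all of $\Rad(\cC_+)$, and degreewise to simplicial objects.

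\emph{Step 3: deriving.} Given $X$ and $Y$, replace them by the functorial resolutions $L_*X$ and $L_*Y$ of \cite[Proposition A.12(1)]{voevodsky2010motivic}. These are termwise coproducts of representables, so they are projectively cofibrant. The termwise smash $L_*X\wedge L_*Y$ is again termwise a coproduct of representables, and it computes $X\wedge Y$ in $H_{\nis,\aff^1}(\cC_+)$. Likewise, $\Lambda^l_R L_*X\otimes\Lambda^l_RL_*Y$ computes the derived tensor product. Applying Step 2 termwise gives a natural isomorphism of simplicial objects, hence the asserted isomorphism after localization.

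\emph{Main obstacle.} The key point is that $\wedge$ and $\otimes^{\LL}$ descend to the Nisnevich--$\aff^1$ localized categories. This means smashing (resp.\ tensoring) with a cofibrant object must preserve Nisnevich--$\aff^1$ equivalences. For smash products this is standard. For transfers I would cite Voevodsky's treatment of the tensor structure on $H_{\nis,\aff^1}(\cor(\cC,R))$ in \cite[Section 1]{voevodsky2010motivic}, where $\otimes^{\LL}$ is defined via the same resolutions. Compatibility then follows because $\LL\Lambda^l_R$ takes generating equivalences of the form $U\times\aff^1\to U$ and Nisnevich squares to equivalences. These classes are closed under products with a fixed representable $V$, and this product corresponds to tensoring with $[V]_R$.
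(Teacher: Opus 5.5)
Your proposal is correct and follows essentially the paper's route. Both arguments use strong monoidality of $\Lambda_R$ on representables and extend it to simplicial radditive presheaves because $\Lambda^l_R$, $\wedge$ and $\otimes$ commute with colimits in each variable. Both compute the derived products on resolutions by coproducts of representables, and both descend to $H_{\nis,\aff^1}$ using Voevodsky's results that $\wedge$, $\otimes^{\LL}$ and $\LL\Lambda^l_R$ respect Nisnevich--$\aff^1$ equivalences (his Lemmas~1.4, 1.6 and Theorem~1.7).

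The only difference is packaging: the paper gets the projective-level compatibility from Hovey's theorem on monoidal Quillen functors, while you check it directly on $L_*X$ and $L_*Y$. One small point: attribute the cofibrancy of $L_*X$ and $L_*Y$ to the splitting of the canonical resolution, since being termwise a coproduct of representables does not by itself guarantee projective cofibrancy.
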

\begin{proof}
The functor $\Lambda_R:\cC_+\to \cor(\cC,R)$ is by definition strong symmetric monoidal:
\[
\Lambda_R(U_+\wedge V_+)
\cong \Lambda_R(U_+)\otimes\Lambda_R(V_+).
\]
Its extension to simplicial radditive presheaves
\[
\Lambda_R^l:
\Delta^{\op}\Rad(\cC_+)
\longrightarrow
\Delta^{\op}\Rad(\cor(\cC,R))
\]
remains strong symmetric monoidal because $\Lambda_R^l$, $\wedge$ and $\otimes$ all commute with small colimits (separately in each variable for $\wedge$ and $\otimes$).

Moreover, by \cite[Lemma~1.3.4]{hovey1999model}, $\Lambda_R^l$ is left Quillen for the projective model structures because its right adjoint $\Lambda^r_R$ forgets transfers and hence preserves objectwise fibrations and trivial fibrations. The left derived functor $\LL\Lambda^l_R$ therefore preserves the derived monoidal products in the projective homotopy categories by \cite[Theorem~4.3.3]{hovey1999model}. We pass to motivic homotopy categories $H_{\nis, \aff^1}$ in three steps.

First, by \cite[Lemma~1.4 and its proof]{voevodsky2010motivic}, the ordinary smash product preserves projective equivalences, so it computes the derived smash product. The same lemma shows that it preserves Nisnevich--$\aff^1$ equivalences, and hence defines the smash product on $H_{\nis,\aff^1}(\cC_+)$. Second, the derived tensor product can be computed using simplicial resolutions whose terms are coproducts of representable presheaves with transfers. Tensor products preserve Nisnevich--$\aff^1$ equivalences between these resolutions by \cite[Lemma~1.6]{voevodsky2010motivic}. Hence the derived tensor product descends to $H_{\nis,\aff^1}(\cor(\cC,R))$. Finally, \cite[Theorem~1.7]{voevodsky2010motivic} shows that $\LL\Lambda_R^l$ descends from the projective homotopy categories to the motivic homotopy categories. Applying motivic localization to the projective monoidal isomorphism now gives the claimed formula, with the smash product and derived tensor product just described.
\end{proof}

\begin{lemma}\label{lem:restriction-tate-tensor}
	Let $i:C\hookrightarrow D$ be an inclusion of admissible categories. For $E_1, E_2\in\spt_D$, the canonical lax monoidal map
	\[
	\mu: i_{\tr,\rad}E_1\otimes^\LL i_{\tr,\rad}E_2\to i_{\tr,\rad}(E_1\otimes^\LL E_2)
	\]
	is an isomorphism in $H_{\nis,\aff^1}(\cor(C,R))$.
\end{lemma}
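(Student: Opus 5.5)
Since the domain and target of $\mu$ are additive in each variable, I reduce to the generators of $\spt_D$. The first task is to check that $i_{\tr,\rad}$ and $\otimes^\LL$ commute with arbitrary direct sums in $H_{\nis,\aff^1}$. For $i_{\tr,\rad}$ this follows from \Cref{lem:comparison-hocolimits}, and for $\otimes^\LL$ it holds because the derived tensor product commutes with colimits in each variable. I also check that $\mu$ is natural in each variable, so that it is compatible with these sums. Simplicial suspensions are handled the same way: both sides commute with $\Sigma$ in each variable, again by \Cref{lem:comparison-hocolimits} together with $\Sigma E\simeq S^1\otimes^\LL E$. After these reductions it suffices to treat
\[
E_1=R(n)[2n]_D,\qquad E_2=R(n')[2n']_D.
\]

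For these generators I identify both sides with $R(n+n')[2(n+n')]_C$. The key input is \Cref{lem:comparison-symmetric-powers} in the case $d=1$, which says that $S^{2n,n}$ is $\theta$-compatible. Concretely, $i_{\tr,\rad}R(n)[2n]_D\simeq \LL\Lambda^l_R\, i_{+,\rad}S^{2n,n}$. The restriction $i_{+,\rad}S^{2n,n}$ is the motivic space $S^{2n,n}$ over $C$, since restriction preserves homotopy cofibers of open immersions of smooth schemes. Hence $i_{\tr,\rad}R(n)[2n]_D\simeq R(n)[2n]_C$.

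Next I combine \Cref{lem:reduced-motive-smash} over $C$ and over $D$ with the fact that $i_{+,\rad}$ is strong monoidal for smash products. Restriction of presheaves commutes with objectwise products, and the derived smash product is the ordinary one by \cite[Lemma~1.4]{voevodsky2010motivic}. Together with the naturality of $\theta$ with respect to smash products, this gives a commutative square. One side of the square is $\mu$; the other three sides are the $\theta$-isomorphisms for $S^{2n,n}$, for $S^{2n',n'}$ and for $S^{2n,n}\wedge S^{2n',n'}\simeq S^{2(n+n'),n+n'}$, combined with the Künneth isomorphisms. This forces $\mu$ to be an isomorphism on the generators.

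The main obstacle is checking that the square really commutes. Specifically, one must show that the lax monoidal structure on $i_{\tr,\rad}$ corresponds, under $\theta$ and the Künneth isomorphisms, to the strong monoidal structure on $i_{+,\rad}$. I would first verify this at the level of representables $U_+,V_+$ with $U,V\in D$. There, on $W\in C$, the map is the evident one
\[
\cor(W,U)\otimes\cor(W,V)\to\cor(W,U\times V),
\]
and the compatibility is a direct check. I would then extend to general objects by resolving through coproducts of representables, as in the proof of \Cref{lem:reduced-motive-smash}, using that all the functors involved preserve homotopy colimits.
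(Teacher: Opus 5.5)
Your proposal is correct, but it takes a different route from the paper.

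The paper never uses $\theta$ in this lemma. It argues as follows. The left adjoint $\LL i_{\tr}^{\rad}$ of $i_{\tr,\rad}$ is strong symmetric monoidal and fully faithful \cite[Corollary~1.20]{voevodsky2010motivic}. Every split proper Tate object lies in its essential image, since $\LL i_{\tr}^{\rad}R(j)[2j]_C\simeq R(j)[2j]_D$ and $\LL i_{\tr}^{\rad}$ preserves sums and suspensions. Writing $E_1\simeq\LL i_{\tr}^{\rad}P$ and $E_2\simeq\LL i_{\tr}^{\rad}Q$, the map $\mu$ becomes the adjunction unit at $P\otimes^{\LL}Q$, which is an isomorphism.

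You instead reduce to generators and run the square of \Cref{cor:comparison-tate-products} in the opposite direction. There, $\theta$-compatibility of $X$ and $Y$ plus invertibility of $\mu$ gives $\theta$-compatibility of $X\wedge Y$. You take $X=S^{2n,n}$ and $Y=S^{2n',n'}$, with $X\wedge Y\simeq S^{2(n+n'),n+n'}$, all $\theta$-compatible. Together with the K\"unneth isomorphisms, this forces $\mu$ to be invertible.

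What the paper's route buys:
\begin{enumerate}
\item It holds for an arbitrary inclusion of admissible categories, exactly as in the statement, with no smoothness or characteristic hypothesis.
\item It applies to every object in the essential image of $\LL i_{\tr}^{\rad}$, not only split proper Tate ones.
\item It needs no reduction to generators.
\end{enumerate}

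What your route buys and costs:
\begin{enumerate}
\item It stays inside the $\theta$-formalism and makes the lemma a converse of \Cref{cor:comparison-tate-products}.
\item As written, it cites \Cref{lem:comparison-symmetric-powers}, and so imports the standing hypotheses of \Cref{sec:motivic-em} ($C\subset\sm/k$, characteristic zero, $D$ $f$-admissible). This dependence is avoidable: $\theta$-compatibility of the spheres only needs that $\theta_{U_+}$ is the identity for $U\in C$, together with \Cref{lem:comparison-hocolimits}.
\item It obliges you to actually prove that $\theta$ is a monoidal transformation, which the paper only asserts in the corollary. The paper's own proof needs only the formal compatibility of the lax structure on $i_{\tr,\rad}$ with the adjunction unit.
\end{enumerate}

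One simplification: your separate suspension step can be dropped. Apply the same square directly to $X=S^{m-2n}\wedge S^{2n,n}$, which is $\theta$-compatible for the same reason as the spheres.
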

\begin{proof}
The left adjoint $\LL i_{\tr}^{\rad}$ to $i_{\tr,\rad}$ is strong symmetric monoidal. This is a formal consequence of
\[
i_{\tr}^{\rad}([U]_C\otimes[V]_C)
\cong[U\times V]_D
\cong[U]_D\otimes[V]_D
\]
for representable presheaves with transfers associated with $U,V\in C$. The proof is similar to the proof of \Cref{lem:reduced-motive-smash} and thus omitted. Moreover $\LL i_{\tr}^{\rad}$ is fully faithful by \cite[Corollary 1.20]{voevodsky2010motivic}, so its adjunction unit is an isomorphism. 

Now every split proper Tate object lies in the essential image of $\LL i_{\tr}^{\rad}$, since $\LL i_{\tr}^{\rad}(R(j)[2j]_C)\simeq R(j)[2j]_D$ and it preserves direct sums and simplicial suspensions. Write $E_1\simeq\LL i_{\tr}^{\rad}(P)$ and $E_2\simeq\LL i_{\tr}^{\rad}(Q)$. The unit isomorphisms give $i_{\tr,\rad}E_1\simeq P$ and $i_{\tr,\rad}E_2\simeq Q$. By the monoidality of $\LL i_{\tr}^{\rad}$, the map in the statement becomes the adjunction unit at $P\otimes^{\LL}Q$, hence an isomorphism.
\end{proof}

\begin{remark}
If $C\subset\sm/k$ and $k$ admits resolution of singularities, Voevodsky's K\"unneth formula \cite[Proposition~4.1.7]{voevodsky2000triangulated} implies that $i_{\tr,\rad}$ preserves derived tensor products of arbitrary objects, by passing to simplicial resolutions by coproducts of representable presheaves with transfers. \Cref{lem:restriction-tate-tensor} avoids resolution of singularities and suffices for our applications: the motivic Eilenberg--MacLane spaces to which we apply it below have split proper Tate reduced motives with field coefficients by \cite[Corollary~3.28]{voevodsky2010motivic}.
\end{remark}

\begin{corollary}\label{cor:comparison-tate-products}
Let $i:C\hookrightarrow D$ be an inclusion of admissible categories with $C\subset\sm/k$, and let $X,Y\in H_{\nis,\aff^1}(D_+)$ be $\theta$-compatible with split proper Tate reduced $R$-motives. Then $X\vee Y$, $X\wedge Y$, and $X\times Y$ are also $\theta$-compatible with split proper Tate reduced $R$-motives.
\end{corollary}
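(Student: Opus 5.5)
Each case is handled by computing both sides of $\theta$ from the reduced motives of $X$ and $Y$ and comparing. Throughout, write $M_X=\LL\Lambda^l_R X$ and $M_Y=\LL\Lambda^l_R Y$, both in $\spt_D$ by hypothesis.

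For the wedge, $X\vee Y$ is the homotopy pushout of $X\leftarrow \Spec(k)_+ \rightarrow Y$ over the basepoint. By \Cref{lem:comparison-hocolimits}, $\theta_{X\vee Y}$ is identified with $\theta_X\oplus\theta_Y$, which is an isomorphism. Its reduced motive is $M_X\oplus M_Y$, again split proper Tate since $\spt_D$ is closed under direct sums.

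For the smash product, \Cref{lem:reduced-motive-smash} over $D$ gives $M_{X\wedge Y}\simeq M_X\otimes^\LL M_Y$, which is split proper Tate by closure under derived tensor products. On the $C$ side, restriction $i_{+,\rad}$ commutes with $\wedge$ on representables and on simplicial objects, so $\LL\Lambda^l i_{+,\rad}(X\wedge Y)\simeq \LL\Lambda^l(i_{+,\rad}X)\otimes^\LL\LL\Lambda^l(i_{+,\rad}Y)$, again by \Cref{lem:reduced-motive-smash} over $C$. Under these identifications $\theta_{X\wedge Y}$ factors as
\[
\LL\Lambda^l i_{+,\rad}X\otimes^\LL\LL\Lambda^l i_{+,\rad}Y\xrightarrow{\theta_X\otimes\theta_Y} i_{\tr,\rad}M_X\otimes^\LL i_{\tr,\rad}M_Y\xrightarrow{\mu} i_{\tr,\rad}(M_X\otimes^\LL M_Y).
\]
The first arrow is an isomorphism by hypothesis, and $\mu$ is an isomorphism by \Cref{lem:restriction-tate-tensor}. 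The main obstacle is verifying that this factorization is correct, i.e.\ that $\theta$ is monoidal: it is compatible with the Künneth isomorphisms and with the lax structure $\mu$. I would check this on representables $U_+,V_+$ with $U,V\in D$. There $\theta$ is the natural map of presheaves with transfers on $C$, and both Künneth maps come from $[U\times V]=[U]\otimes[V]$. The general case then follows because every object has a simplicial resolution by coproducts of representables (as in the proof of \Cref{lem:symmetric-power-cofiber}), and all the functors involved preserve homotopy colimits, \Cref{lem:comparison-hocolimits}.

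For the product, use the standard cofiber sequence $X\vee Y\to X\times Y\to X\wedge Y$ of pointed spaces, which holds levelwise for simplicial radditive presheaves and is a homotopy cofiber sequence because the first map is a termwise monomorphism. Two-out-of-three, \Cref{lem:comparison-properties}(2), then gives $\theta$-compatibility of $X\times Y$. For the motive, this cofiber sequence splits stably: the projections $X\times Y\to X$ and $X\times Y\to Y$ give a retraction of $M_{X\vee Y}\to M_{X\times Y}$ in the additive (triangulated-embeddable) category. Hence $M_{X\times Y}\simeq M_X\oplus M_Y\oplus(M_X\otimes^\LL M_Y)$, which is split proper Tate.
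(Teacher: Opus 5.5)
Your proposal is essentially the paper's proof. The wedge is handled via \Cref{lem:comparison-hocolimits}, and the smash product via the factorization of $\theta_{X\wedge Y}$ through $\theta_X\otimes^{\LL}\theta_Y$ and the isomorphism $\mu$ of \Cref{lem:restriction-tate-tensor}. The product is handled via two-out-of-three on $X\vee Y\to X\times Y\to X\wedge Y$, plus the splitting by the projections.

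The paper simply asserts that $\theta$ is monoidal, and your check of this on representables is not quite right as stated. For $U\notin C$, the restriction $i_{+,\rad}U_+$ is not representable on $C$, so the $C$-side K\"unneth map does not come directly from $[U\times V]=[U]\otimes[V]$. The clean argument is formal: $\theta$ is the mate of the square $\Lambda^l i_+^{\rad}\cong i_{\tr}^{\rad}\Lambda^l$ of strong monoidal left adjoints, hence a monoidal transformation.

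There is also one notational slip. In the paper's conventions the base point of $D_+$ is $\Spec(k)=\varnothing_+$, while $\Spec(k)_+$ is $S^0$. So the wedge is the pushout over $\Spec(k)$ (that is, just the coproduct), not over $\Spec(k)_+$.
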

\begin{proof}
The wedge sum $X\vee Y$ is $\theta$-compatible by \Cref{lem:comparison-hocolimits}. Its reduced motive is the direct sum of those of $X$ and $Y$, since $\LL\Lambda_R^l$ is a left adjoint and hence preserves coproducts. This direct sum is split proper Tate.

For $X\wedge Y$, note that the restriction functor $i_{+,\rad}$ preserves smash products, since smash products are computed objectwise on connected schemes. Together with \Cref{lem:reduced-motive-smash}, this gives the two vertical isomorphisms in the diagram
\[
\begin{tikzcd}
\LL\Lambda_R^l i_{+,\rad}(X\wedge Y)
  \ar[r,"\theta_{X\wedge Y}"] \ar[d,"\simeq"']
& i_{\tr,\rad}\LL\Lambda_R^l(X\wedge Y) \\
\LL\Lambda_R^l i_{+,\rad}X\otimes^{\LL}\LL\Lambda_R^l i_{+,\rad}Y
  \ar[d,"\theta_X\otimes^{\LL}\theta_Y"']
& \\
i_{\tr,\rad}\LL\Lambda_R^l X\otimes^{\LL}i_{\tr,\rad}\LL\Lambda_R^l Y
  \ar[r,"\mu"']
& i_{\tr,\rad}\bigl(\LL\Lambda_R^l X\otimes^{\LL}\LL\Lambda_R^l Y\bigr)
  \ar[uu,"\simeq"']
\end{tikzcd}
\]
which commutes by the monoidal compatibility of the adjunction maps defining $\theta$. Here $\mu$ is the lax monoidal map of \Cref{lem:restriction-tate-tensor}, hence an isomorphism by the split proper Tate hypothesis, and $\theta_X\otimes^{\LL}\theta_Y$ is an isomorphism by $\theta$-compatibility. Thus $X\wedge Y$ is $\theta$-compatible, and its reduced motive is split proper Tate by \Cref{lem:reduced-motive-smash} and closure of $\spt$ under tensor products.

Finally applying two-out-of-three to $X\vee Y\to X\times Y\to X\wedge Y$ shows that $X\times Y$ is $\theta$-compatible. The left derived Quillen functor $\LL\Lambda_R^l$ preserves this homotopy cofiber sequence \cite[Proposition~6.4.1]{hovey1999model}, giving
\[
\LL\Lambda_R^l X\oplus\LL\Lambda_R^l Y
\longrightarrow\LL\Lambda_R^l(X\times Y)
\longrightarrow\LL\Lambda_R^l(X\wedge Y).
\]
The projections from $X\times Y$ onto $X$ and $Y$ induce a splitting of the first map, proving that the reduced motive of $X\times Y$ is a direct sum of those of $X\vee Y$ and $X\wedge Y$, hence split proper Tate.
\end{proof}

\subsection{Eilenberg--MacLane spaces}
In this subsection, we study motivic Eilenberg--MacLane spaces and their motives.

Following \cite[Section 3.2]{voevodsky2010motivic}, the motivic Eilenberg--MacLane space of an abelian group $A$ on an admissible category $\cC$ is given, for $n\ge0$, by
\[
K(A,2n,n)_{\cC}=\Lambda^r_{\ZZ}(A\otimes^{\LL}_{\ZZ}\ZZ(n)[2n]_{\cC})\in H_{\nis,\aff^1}(\cC_+).
\]

\begin{lemma}\label{lem:eilenberg-maclane-realization}
Let $M_\bullet\to A$ be a simplicial resolution of an abelian group $A$. Then the augmentation induces a natural isomorphism
\[
|K(M_\bullet,2n,n)_\cC|\simeq K(A,2n,n)_{\cC}
\]
in $H_{\nis,\aff^1}(\cC_+)$.
\end{lemma}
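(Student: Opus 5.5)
The plan is to commute geometric realization past both $\Lambda^r_{\ZZ}$ and the derived tensor product, and then use the augmentation $M_\bullet\to A$.

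\textbf{Step 1 (realization of the Eilenberg--MacLane spaces).} Write $P=\ZZ(n)[2n]_{\cC}$, represented by a cofibrant simplicial presheaf with transfers, for instance the free object $\Lambda^l_{\ZZ}$ of a cofibrant model of $S^{2n,n}$. For a free abelian group $M$, the derived tensor product $M\otimes^{\LL}_{\ZZ}P$ is just the termwise tensor $M\otimes_{\ZZ}P$, which is a direct sum of copies of $P$. Levelwise, then, $K(M_q,2n,n)_{\cC}=\Lambda^r_{\ZZ}(M_q\otimes P)$, and the realization $|K(M_\bullet,2n,n)|$ is the diagonal of the bisimplicial presheaf $\Lambda^r_{\ZZ}(M_\bullet\otimes P)$.

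\textbf{Step 2 (commute $\Lambda^r_{\ZZ}$ past the diagonal).} The functor $\Lambda^r_{\ZZ}$ only forgets transfers and is computed objectwise. It therefore commutes with taking diagonals:
\[
\operatorname{diag}\Lambda^r_{\ZZ}(M_\bullet\otimes P)=\Lambda^r_{\ZZ}\bigl(\operatorname{diag}(M_\bullet\otimes P)\bigr).
\]
By the Dold--Kan correspondence and the Eilenberg--Zilber theorem, $\operatorname{diag}(M_\bullet\otimes P)$ is objectwise a model for $M_\bullet\otimes P$ totalized. Since $M_\bullet\to A$ is a resolution and each $P(U)$ is a complex of free abelian groups (or by a flatness argument on $M_\bullet$), this totalization computes $A\otimes^{\LL}_{\ZZ}P$ objectwise. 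The comparison map is therefore a projective (objectwise) equivalence onto a model of $A\otimes^{\LL}P$.

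\textbf{Step 3 (invariance of $\Lambda^r$).} The functor $\Lambda^r_{\ZZ}$ preserves objectwise weak equivalences between simplicial abelian presheaves, because it just views a simplicial abelian group as a simplicial set. The objectwise equivalence from Step 2 thus descends to an isomorphism in the projective homotopy category. We then pass to $H_{\nis,\aff^1}(\cC_+)$, where realization of the simplicial object computes $\hocolim_{\Delta^{\op}}$ by the diagonal formula already used in \Cref{lem:comparison-hocolimits}. Naturality in the augmentation follows because every map involved is induced by $M_\bullet\to A$.

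\textbf{Main obstacle.} The delicate point is that $\Lambda^r_{\ZZ}$ is a right adjoint, and a priori it does not commute with homotopy colimits in the Nisnevich--$\aff^1$ localized category. The argument must therefore be run in the projective model, before localization. There we must check that $\Lambda^r_{\ZZ}(A\otimes^{\LL}P)$ as defined agrees with the model produced in Step 2, which amounts to choosing a model of $A\otimes^{\LL}P$ that is already $\aff^1$-local; Voevodsky's description in Section~3.2 should supply this.
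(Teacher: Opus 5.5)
Your Steps 1--3 are essentially the paper's proof. The paper likewise fixes a projectively cofibrant $T_\bullet$ representing $\ZZ(n)[2n]_{\cC}$ with projective (hence flat) values, represents $K(M_q,2n,n)_{\cC}$ by $\Lambda^r_{\ZZ}(M_q\otimes_{\ZZ}T_\bullet)$ and the realization by the diagonal, and obtains an objectwise equivalence to $\Lambda^r_{\ZZ}(A\otimes_{\ZZ}T_\bullet)$. It uses the rowwise-equivalence lemma for bisimplicial sets instead of Eilenberg--Zilber, and it relies on flatness of $T_r(U)$, so unlike in your Step 1 the $M_q$ need not be free.

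Your ``main obstacle'' does not arise, and no $\aff^1$-local model is needed. Following Voevodsky, the paper applies $\Lambda^r_{\ZZ}$ to any cofibrant model without fibrant replacement, which presupposes that forgetting transfers preserves Nisnevich--$\aff^1$ equivalences; this is also what makes $K(A,2n,n)_{\cC}=\Lambda^r_{\ZZ}(A\otimes^{\LL}_{\ZZ}\ZZ(n)[2n]_{\cC})$ well defined on $H_{\nis,\aff^1}$. The objectwise equivalence you build before localization therefore already finishes the proof.
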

\begin{proof}
Choose a projectively cofibrant simplicial presheaf with transfers $T_\bullet$ representing $\ZZ(n)[2n]_{\cC}$. Each $T_r$ is a direct summand of a direct sum of representable presheaves with transfers, whose values are free abelian groups of finite correspondences; hence $T_r(U)$ is projective and thus flat over $\ZZ$ for every $U\in\cC$. Then $K(M_q,2n,n)_{\cC}$ is represented by $\Lambda^r_{\ZZ}(M_q\otimes_{\ZZ}T_\bullet)$, and $|K(M_\bullet)|$ is represented by the diagonal of the resulting bisimplicial presheaf by \cite[Theorem 15.11.6]{hirschhorn2003model}. For every $U$ and $r$, flatness gives a weak equivalence $M_\bullet\otimes_{\ZZ}T_r(U)\to A\otimes_{\ZZ}T_r(U)$. By \cite[Theorem 15.11.11]{hirschhorn2003model}, a map of bisimplicial sets that is a weak equivalence in each row induces a weak equivalence on realizations, hence on diagonals. Applying this to the preceding maps for each $U$ proves the assertion.
\end{proof}

\begin{theorem}\label{thm:comparison-eilenberg-maclane}
Let $k$ be a field of characteristic zero and $R$ a commutative unital ring. Suppose $i:C\hookrightarrow D$ is an inclusion of admissible categories such that $C\subset\sm/k$ and $D$ is $f$-admissible and consists of seminormal schemes. Then for every abelian group $A$ and every $n>0$, the canonical comparison morphism
\[
\theta_{K(A,2n,n)_D}:
\LL\Lambda^l_{R}K(A,2n,n)_C
\to
i_{\tr,\rad}\LL\Lambda^l_{R}K(A,2n,n)_D
\]
is an isomorphism in $H_{\nis,\aff^1}(\cor(C,R))$. Here the source is identified using the natural isomorphism $i_{+,\rad}K(A,2n,n)_D\simeq K(A,2n,n)_C$ of \cite[Lemma 3.18]{voevodsky2010motivic}.
\end{theorem}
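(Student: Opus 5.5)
The argument runs in three stages, following the plan announced at the start of \Cref{sec:motivic-em}: first $A=\ZZ$, then finitely generated $A$, then arbitrary $A$.

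\emph{The case $A=\ZZ$.} Over a field of characteristic zero, the motivic Dold--Thom theorem \cite[Theorem~3.7 and Section~3.2]{voevodsky2010motivic} identifies $K(\ZZ,2n,n)_D$, on the seminormal $f$-admissible category $D$, with the group completion of the free commutative monoid $\Sym^\infty(S^{2n,n})=\colim_d\Sym^d(S^{2n,n})$. Since $n>0$, the object $S^{2n,n}$ is $\aff^1$-connected, so this group completion is $\aff^1$-equivalent to $\LL\Sym^\infty(S^{2n,n})$ itself. I would write
\[
K(\ZZ,2n,n)_D\simeq \hocolim_d \LL\Sym^d(S^{2n,n})
\]
in $H_{\nis,\aff^1}(D_+)$. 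Each $\LL\Sym^d(S^{2n,n})$ is $\theta$-compatible by \Cref{lem:comparison-symmetric-powers}. The sequential homotopy colimit is preserved by both sides of $\theta$, compatibly with $\theta$, by \Cref{lem:comparison-hocolimits}. Hence $K(\ZZ,2n,n)_D$ is $\theta$-compatible. This step is the main obstacle. One has to check carefully that Voevodsky's Dold--Thom identification holds in $D$ without resolution. His proof uses only seminormality and characteristic zero via relative cycles, which is why $D$ is assumed seminormal. One must also check that the colimit of the $\Sym^d$ computes the derived infinite symmetric power; here I would invoke \cite[Corollary~2.11]{voevodsky2010motivic} for the solid object $S^{2n,n}$.

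\emph{Finitely generated $A$.} For $A=\ZZ^r$, we have $K(\ZZ^r,2n,n)=K(\ZZ,2n,n)^{\times r}$, so I would apply \Cref{cor:comparison-tate-products} inductively. This requires the reduced $R$-motive of $K(\ZZ,2n,n)_D$ to be split proper Tate. For field coefficients this is \cite[Corollary~3.28]{voevodsky2010motivic}, transported along $\theta$. For general $R$, I would instead avoid the Tate condition. I would argue directly that $\theta$ for a product follows from the cofiber sequence $X\vee Y\to X\times Y\to X\wedge Y$, where $X\wedge Y$ is again a hocolim of smash products of symmetric powers, namely quotients $((\aff^n)^{d}\times(\aff^n)^{e})/(\symgrp_d\times\symgrp_e)$ minus linear subspaces. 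These are covered by \Cref{cor:comparison-finite-linear-quotients} applied to the group $\symgrp_d\times\symgrp_e$. In fact it is cleanest to treat products of symmetric powers uniformly this way. For general finitely generated $A$, I would choose a resolution $0\to\ZZ^s\to\ZZ^r\to A\to0$ and regard it as a simplicial resolution $M_\bullet\to A$ by finitely generated free groups, each term $M_q$ being free of finite rank. \Cref{lem:eilenberg-maclane-realization} then gives $K(A,2n,n)_D\simeq|K(M_\bullet,2n,n)_D|$. The same identification holds on $C$, compatibly with the isomorphism $i_{+,\rad}K(A,2n,n)_D\simeq K(A,2n,n)_C$ of \cite[Lemma 3.18]{voevodsky2010motivic}. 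A geometric realization is a homotopy colimit, so \Cref{lem:comparison-hocolimits} gives $\theta$-compatibility.

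\emph{Arbitrary $A$.} Write $A=\colim A_\alpha$ as a filtered colimit of finitely generated subgroups. The functor $A\mapsto \Lambda^r_\ZZ(A\otimes^\LL\ZZ(n)[2n])$ commutes with filtered colimits, since these are computed objectwise and are homotopy colimits. Therefore $K(A,2n,n)_D\simeq\hocolim_\alpha K(A_\alpha,2n,n)_D$, and \Cref{lem:comparison-hocolimits} finishes the proof.
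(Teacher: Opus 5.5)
Your proof is correct. It follows the paper's three-stage outline ($\ZZ$, then finitely generated $A$, then filtered colimits), but your general-$R$ branch handles finite powers of $K(\ZZ,2n,n)_D$, and hence arbitrary coefficients, by a genuinely different route.

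\emph{The paper's route.} It first restricts to a field $\FF$ of coefficients. It gets $\theta$-compatibility of $K(\ZZ,2n,n)_D^{\times r}$ from Voevodsky's theorem that $\LL\Lambda^l_{\FF}K(\ZZ,2n,n)_D$ is split proper Tate, via \Cref{lem:restriction-tate-tensor} and \Cref{cor:comparison-tate-products}. Only at the end does it reach arbitrary $R$, by showing that the cone of $\theta_{\ZZ}$ in $\dm^{\eff}_-(C,\ZZ)$ vanishes after $\otimes^{\LL}_{\ZZ}\QQ$ and $\otimes^{\LL}_{\ZZ}\FF_p$.

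\emph{Your route.} You stay geometric. You observe that $\widetilde{\Sym}^{d}(S^{2n,n})\wedge\widetilde{\Sym}^{e}(S^{2n,n})$ is the quotient of $V/G$ by $(V-\{0\})/G$, with $V=(\aff^n)^{d+e}$ and $G=\symgrp_d\times\symgrp_e$. So \Cref{cor:comparison-finite-linear-quotients} applies for every $R$. You then smash the cofiber sequences of \Cref{lem:symmetric-power-cofiber} with a fixed object, pass to homotopy colimits, and apply two-out-of-three to $X\vee Y\to X\times Y\to X\wedge Y$. Iterating with groups $\symgrp_{d_1}\times\cdots\times\symgrp_{d_r}$ gives all finite powers.

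\emph{What each buys.} Your version makes the theorem a direct consequence of the good-stratification machinery for all coefficient rings at once. It dispenses with Voevodsky's Tate computation, \Cref{lem:restriction-tate-tensor}, and the final cone argument. The paper's product step is more formal and more general: it applies to any $\theta$-compatible spaces with split proper Tate motives, not only those built from symmetric powers. The price is the detour through field coefficients.

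\emph{Points to make explicit when writing it up.} Put $X_1=\aff^{nd}/\symgrp_d$, $U_1=(\aff^{nd}-\{0\})/\symgrp_d$, and similarly $X_2,U_2$. You should state that $X_1\times X_2\cong V/G$, since invariants commute with $\otimes_k$. You should also state that the open subscheme $(U_1\times X_2)\cup(X_1\times U_2)$, which is the sheaf-theoretic union by which the smash product is a quotient, is exactly $(V-\{0\})/G$. Finally, in your field branch no ``transport along $\theta$'' is needed. \Cref{cor:comparison-tate-products} asks for the split proper Tate property of the reduced motive over $D$, which is exactly what Voevodsky's corollary provides.
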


\begin{proof}

We first treat the case $R=\FF$ with $\FF$ a field. Thanks to Voevodsky's motivic Dold--Thom package \cite[Theorem~3.7, Proposition~3.11, and Lemma~3.13]{voevodsky2010motivic} we get an equivalence
\[
K(\ZZ,2n,n)_D\simeq\LL\Sym^\infty(S^{2n,n})
\]
in $H_{\nis,\aff^1}(D_+)$. Since the quotient model $\aff^n/(\aff^n-\{0\})$ of $S^{2n,n}$ is solid, \cite[Corollaries~2.11 and~2.23]{voevodsky2010motivic} and the fact that small filtered colimits of pointed simplicial sets compute homotopy colimits \cite[XII, \S3.5]{bousfield1972homotopy}, applied objectwise, give
\[
\LL\Sym^\infty(S^{2n,n})\simeq\hocolim_{d\ge0}\LL\Sym^d(S^{2n,n}).
\]
Each finite symmetric power is $\theta$-compatible by \Cref{lem:comparison-symmetric-powers}, hence so is $K(\ZZ,2n,n)_D$ by \Cref{lem:comparison-hocolimits}. Moreover, $\LL\Lambda^l_{\FF}K(\ZZ,2n,n)_D$ is split proper Tate by \cite[Corollary 3.28]{voevodsky2010motivic}, so all finite powers of $K(\ZZ,2n,n)_D$ are $\theta$-compatible by \Cref{cor:comparison-tate-products}.

For finitely generated $A$, choose a simplicial resolution $M_\bullet\to A$ by finitely generated free abelian groups. Each $K(M_q,2n,n)_D$ is a finite Cartesian power of $K(\ZZ,2n,n)_D$, hence is $\theta$-compatible. Since geometric realization is a small homotopy colimit, $\left|K(M_\bullet,2n,n)_D\,\right|$ is $\theta$-compatible. Therefore so is $K(A,2n,n)_D$ by \Cref{lem:eilenberg-maclane-realization}.

For arbitrary $A$, write $A=\colim_\lambda A_\lambda$ as the filtered colimit of its finitely generated subgroups. Let $T_\bullet$ be as in the proof of \Cref{lem:eilenberg-maclane-realization}. Then $K(A_\lambda,2n,n)_D$ and $K(A,2n,n)_D$ are represented by $\Lambda^r_{\ZZ}(A_\lambda\otimes_{\ZZ}T_\bullet)$ and $\Lambda^r_{\ZZ}(A\otimes_{\ZZ}T_\bullet)$, respectively. Tensoring and forgetting transfers commute with filtered colimits, so
\[
\Lambda^r_{\ZZ}(A\otimes_{\ZZ}T_\bullet)\cong
\colim_\lambda \Lambda^r_{\ZZ}(A_\lambda\otimes_{\ZZ}T_\bullet).
\]
As in the symmetric-power argument above, applying \cite[XII, \S3.5]{bousfield1972homotopy} objectwise gives
\[
K(A,2n,n)_D\simeq
\hocolim_\lambda K(A_\lambda,2n,n)_D
\]
in $H_{\nis,\aff^1}(D_+)$. The finitely generated case and \Cref{lem:comparison-hocolimits} therefore prove the assertion for field coefficients.

For general coefficients, write $\theta_R$ for the comparison of $K(A,2n,n)_D$ with coefficients in $R$. The restriction functor $i_{\tr,\rad}$ is additive and preserves small homotopy colimits by the proof of \Cref{lem:comparison-hocolimits}. It therefore commutes with derived change of coefficients, as can be checked after forgetting the $R$-action using a simplicial free abelian resolution of $R$. Forgetting coefficients reflects isomorphisms. Together with change of coefficients for reduced motives and the naturality of the adjunction maps defining $\theta$, this gives
\[
\theta_{\ZZ}\otimes_{\ZZ}^{\LL}R\simeq\theta_R,
\]
where the tensor product denotes derived extension of coefficients.

View $\theta_{\ZZ}$ in the triangulated category $\dm^{\eff}_-(C,\ZZ)$ via the fully faithful functor
\[
H_{\nis,\aff^1}(\cor(C,\ZZ))\hookrightarrow \dm^{\eff}_-(C,\ZZ)
\]
of \cite[Theorem~1.15]{voevodsky2010motivic}, and let $E$ be its cone. The field case gives
\[
E\otimes_{\ZZ}^{\LL}\QQ=0,
\quad E\otimes_{\ZZ}^{\LL}\FF_p=0
\quad\text{for every prime }p.
\]
Here the scalar extensions are viewed as integral motives by forgetting coefficients. Since $E\otimes_{\ZZ}^{\LL}\FF_p$ is the cone of multiplication by $p$ on $E$, every prime acts invertibly on $E$. Thus $E$ is isomorphic to its rationalization $E\otimes_{\ZZ}^{\LL}\QQ$, which is zero. Full faithfulness now shows that $\theta_{\ZZ}$ is an isomorphism, and derived extension of coefficients proves the assertion for arbitrary $R$.
\end{proof}

\subsection{Betti realization and bistable operations}
We introduce the notation needed to state the main theorem of this section.

Let $k$ be a field of characteristic zero, and let $\ell$ be a prime. Let $\cA^{*,*}(k,\FF_\ell)$ denote the motivic Steenrod algebra constructed by Voevodsky in \cite[Section~11]{voevodsky2003reduced}.

Let $\sh(k)$ be the stable motivic homotopy category, obtained by stabilizing the motivic homotopy category $H_{\nis,\aff^1}((\sm/k)_+)\simeq H_{\nis,\aff^1}((\smqp/k)_+)$ with respect to $(\proj^1,\infty)\simeq S^{2,1}$ \cite[Section~2]{hoyois2015cobordism}. Its objects are called \textit{motivic spectra}. The category $\sh(k)$ is triangulated, with translation functor $\Sigma^{1,0}$. 

Let $\hz$ and $\hf_\ell$ be the motivic Eilenberg--MacLane spectra over $k$ representing integral and mod-$\ell$ motivic cohomology, respectively. They can be obtained by stabilizing the corresponding motivic Eilenberg--MacLane spaces $K(\ZZ,2n,n)$ and $K(\FF_\ell, 2n,n)$.

The Betti realization functor $\real: \sm/\CC\to \topcat$ extends to the motivic homotopy category, and induces a corresponding functor from motivic spectra over $\CC$ to the classical stable homotopy category.

\begin{theorem}\label{thm:bistable-operations}
Let $k$ be a field of characteristic zero and $\ell$ a prime.
\begin{enumerate}[(1)]
\item For $k=\CC$, complex realization gives natural equivalences
\[
\real\bigl(K(A,2n,n)_{\smqp/\CC}\bigr)
\simeq K(A,2n)
\]
for every abelian group $A$ and $n>0$, where the right-hand side is the classical Eilenberg--MacLane space.
\item For $k=\CC$, stabilization gives natural equivalences
\[
\real(\hz)\simeq \hz,\quad
\real(\hf_\ell)\simeq \hf_\ell,
\]
where the right-hand sides are the corresponding classical Eilenberg--MacLane spectra. These equivalences respect reduction and the Bockstein.
\item For $k=\CC$ and $A=\FF_\ell$, the comparisons in (1) and (2) are compatible with motivic and classical Steenrod operations.
\item Evaluation of operations on the universal classes induces an isomorphism of bigraded algebras
\[
\cA^{*,*}(k,\FF_\ell)\xrightarrow{\simeq}
\varprojlim_{n>0}\widetilde H^{*+2n,*+n}\bigl(K(\FF_\ell,2n,n)_{\sm/k},\FF_\ell\bigr).
\]
Here the transition maps are induced by $S^{2,1}$-suspension, and the algebra structure on the right is given by composition of operations.
\end{enumerate}
\end{theorem}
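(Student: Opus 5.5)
Throughout, I would follow the structure of Voevodsky's argument in \cite[Sections~3.3--3.5]{voevodsky2010motivic}, substituting \Cref{thm:comparison-eilenberg-maclane} for every appeal to \cite[Theorem~1.21]{voevodsky2010motivic}. I take $C=\smqp/k$ and $D=\snqp/k$; $D$ is $f$-admissible and consists of seminormal schemes, so the hypotheses of \Cref{thm:comparison-eilenberg-maclane} are met.

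\emph{Step 1: the realization statements (1)--(3).}
\begin{enumerate}[(i)]
\item On $D$, the motivic Dold--Thom theorem gives $K(\ZZ,2n,n)_D\simeq\hocolim_d\LL\Sym^d(S^{2n,n})$.
\item Restricting to $C$ is harmless by \cite[Lemma~3.18]{voevodsky2010motivic}, so this description also holds for $K(\ZZ,2n,n)_{\smqp/\CC}$.
\item Realization commutes with homotopy colimits. It also sends the quotient schemes $(\aff^n)^d/\symgrp_d$ and $((\aff^n)^d-\{0\})/\symgrp_d$ to the corresponding topological quotients. Hence $\real K(\ZZ,2n,n)\simeq \Sym^\infty(S^{2n})\simeq K(\ZZ,2n)$ by the classical Dold--Thom theorem.
\item For general $A$, I would use \Cref{lem:eilenberg-maclane-realization} with a free resolution, together with filtered colimits, exactly as in the proof of \Cref{thm:comparison-eilenberg-maclane}.
\end{enumerate}
Statement (2) follows by stabilizing, since $\real(S^{2,1})=S^2$ and the structure maps are compatible. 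Reduction and the Bockstein come from the short exact sequences of coefficient groups, which realization preserves.

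For (3), the motivic Steenrod operations are built in \cite{voevodsky2003reduced} from the total power operation on $\Sym^\ell$ and from $B\mu_\ell$. Their realizations are the classical constructions, so I would check compatibility on the generators $\Sq^{2i}$ (or $P^i$) and $\beta$. On the realized universal classes they give the classical operations, possibly up to the standard normalisation of $\tau\mapsto1$.

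\emph{Step 2: the identification of operations (4).}
\begin{enumerate}[(i)]
\item The split proper Tate decomposition of $\LL\Lambda^l_{\FF_\ell}K(\FF_\ell,2n,n)_D$ is obtained on $D$ from \cite[Corollary~3.28]{voevodsky2010motivic}. The point is that the resolution-dependent step is only the transfer of this decomposition to $C$.
\item \Cref{thm:comparison-eilenberg-maclane} provides this transfer. Consequently $\widetilde H^{*,*}(K(\FF_\ell,2n,n)_{\sm/k},\FF_\ell)$ is a free module over $H^{*,*}(k,\FF_\ell)$. Its ranks are computed by the same Tate decomposition.
\item In the stable range the bidegrees involved satisfy weight $\le$ half the degree. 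The admissible-monomial basis then accounts for the right number of generators.
\item To prove that the map from $\cA^{*,*}$ is injective and surjective, I would compare over $\CC$ via Betti realization, using (3): the admissible monomials map to linearly independent classical classes.
\item For general $k$ of characteristic zero, I would reduce to $\QQ$-forms and base change, using that the Tate-type counts are base independent. Alternatively, I could use that $\cA^{*,*}(k)$ is defined over the prime field and independence can be checked after extending to $\bar k\hookrightarrow\CC$-embeddable subfields.
\end{enumerate}
The transition maps are $S^{2,1}$-suspensions, and the inverse limit stabilizes degreewise.

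\emph{Main obstacle.} The hardest step is (4)(ii): recovering, without \cite[Theorem~1.21]{voevodsky2010motivic}, that the reduced motive \emph{on $C$} is split proper Tate with the expected summands. Everything depends on $\theta$ being an isomorphism for all $K(\FF_\ell,2n,n)$ and their finite products. I would first verify that products are covered by \Cref{cor:comparison-tate-products}, which is why field coefficients were treated first. After that, I would follow \cite[Theorem~3.49]{voevodsky2010motivic} verbatim, invoking \Cref{thm:comparison-eilenberg-maclane} wherever Voevodsky uses \cite[Theorem~3.32]{voevodsky2010motivic}.
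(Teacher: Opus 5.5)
Your top-level plan is exactly the paper's proof: take $C=\smqp/k$ and $D=\snqp/k$, and run Voevodsky's arguments with \Cref{thm:comparison-eilenberg-maclane} substituted for the resolution-dependent comparison. The problem is that Step~1 does not do this. For (1) you never invoke \Cref{thm:comparison-eilenberg-maclane}. Instead, step (iii) asserts that realization sends $(\aff^n)^d/\symgrp_d$ and $((\aff^n)^d-\{0\})/\symgrp_d$ to the topological quotients.

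But $\real$ is only defined on $H_{\nis,\aff^1}((\sm/\CC)_+)$. These singular quotients enter $K(\ZZ,2n,n)_{\smqp/\CC}$ only through $i_{+,\rad}$, and the restricted presheaf of a singular scheme is not representable on smooth schemes. What you actually need is $\real(i_{+,\rad}X)\simeq X(\CC)$ for singular $X$, which is a Betti-realization analogue of the smooth-versus-singular comparison. This is not formal; the standard route is cdh descent, i.e.\ resolution. It is also not what \Cref{thm:comparison-eilenberg-maclane} provides, since that theorem concerns reduced motives, not realizations of restricted spaces. As written, Step~1 quietly reintroduces the dependence the paper is designed to remove.

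There are two ways to repair this. The first is to follow the paper: run Voevodsky's proofs (Corollary~3.48, Lemma~3.54, Theorem~3.49) verbatim. There the only smooth-versus-singular comparison needed is the one for the reduced motives of $K(A,2n,n)$, now supplied by \Cref{thm:comparison-eilenberg-maclane}.

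The second is to prove $\real(i_{+,\rad}(U/G))\simeq U(\CC)/G$ yourself from the good stratifications of \Cref{sec:finite-linear-quotients}, rerunning the induction of \Cref{thm:transfer-comparison} on singular chains of realizations. This works because $i_{+,\rad}$ preserves Nisnevich--$\aff^1$ equivalences, so Nisnevich excision survives restriction, and because analytic realizations of Nisnevich squares are excisive. You must work with chains rather than spaces, since two-out-of-three fails for cofiber sequences of spaces. Then finish with a homology Whitehead argument; the realized Eilenberg--MacLane spaces are connected H-spaces, hence simple.

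A smaller point on (4): Betti realization sends $\tau\mapsto1$, so it can detect injectivity but not surjectivity. For example, the injective map of free $\FF_\ell[\tau]$-modules $x\mapsto\tau y$ becomes an isomorphism after realization without being surjective. Your count in (iii) must therefore match generators bidegree by bidegree, using the weights of the Tate summands, not merely the totals in each topological degree.
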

\begin{proof}
We follow Voevodsky's proofs of \cite[Corollary~3.48, Lemma~3.54, and Theorem~3.49]{voevodsky2010motivic} verbatim, except for the use of resolution of singularities. In these arguments, resolution of singularities is used only through Theorem~1.21 in the proof of Theorem~3.32, to compare the reduced motive of a restricted space with the restriction of its reduced motive. The assertions above require this comparison only for the Eilenberg--MacLane spaces $K(A,2n,n)_{\snqp/k}$, so \Cref{thm:comparison-eilenberg-maclane}, applied to $\smqp/k\hookrightarrow\snqp/k$, supplies the required replacement.
\end{proof}


\section{Thom obstructions}\label{sec:thom-obstructions}

Throughout this section we work over $\CC$ and fix a prime $\ell$. 
The plan of this section is as follows. First we lift the construction of the Brown--Peterson spectrum in \cite{brown1966spectrum} to motivic spectra, in such a way that the successive motivic $k$-invariants in the construction all have motivic bidegrees of the form $(2d+1,d)$. Then combining this with standard vanishing results for motivic cohomology, we show that all the Thom obstructions to representing the singular homology class of a complex algebraic cycle by a complex bordism class vanish.

\subsection{Recollections on the motivic Steenrod algebra}
Write $\cA=\cA^{*,*}(\CC,\FF_\ell)$ for the motivic Steenrod algebra over $\CC$ with $\FF_\ell$-coefficients. Choosing a primitive $\ell$-th root of unity gives an isomorphism
\[
H^{*,*}(\Spec(\CC),\FF_\ell)\simeq\FF_\ell[\tau],
\quad |\tau|=(0,1).
\]
We identify this coefficient ring with its image in $\cA$, which is \textit{central}.

As an $\FF_\ell[\tau]$-algebra, $\cA$ is generated by the Bockstein $\beta$ and the reduced powers $P^i$ $(i\ge0)$ \cite[Section~11]{voevodsky2003reduced}, whose bidegrees are
\[
|\beta|=(1,0),\quad |P^i|=(2i(\ell-1),i(\ell-1)).
\]
These operations satisfy $P^0=1$, $\beta^2=0$, and the motivic Adem relations \cite[Theorems~10.2--10.3]{voevodsky2003reduced}; their action on products is given by the Cartan formulas \cite[Section~9]{voevodsky2003reduced}. When $\ell=2$, we write $\Sq^{2i}=P^i$ and $\Sq^{2i+1}=\beta P^i$.

The structure of $\cA$ is almost identical to that of the classical Steenrod algebra. As an $\FF_\ell[\tau]$-module, $\cA$ admits the Milnor basis \cite[Section~13]{voevodsky2003reduced}:
\[
 \bigl\{Q_0^{\epsilon_0}Q_1^{\epsilon_1}\cdots P^R\bigr\}.
\]
Here $\epsilon_j\in\{0,1\}$, only finitely many $\epsilon_j$ are nonzero, and $R$ ranges over the set $\cR$ of sequences $(r_1,r_2,\ldots)$ of nonnegative integers with finite support. The $Q_j$ are the Milnor primitives and the $P^R$ are the even Milnor operations, with $Q_0=\beta$ and $P^{(n,0,\ldots)}=P^n$ \cite[Lemmas~13.1 and~13.5]{voevodsky2003reduced}. The antipode $c:\cA\to\cA$ satisfies $c(Q_j)=-Q_j$.

The bidegrees of the Milnor operations are given by
\[
 |Q_i|=(2\ell^i-1,\ell^i-1),
 \quad |P^R|=(2d(R),d(R)),
\]
in which
\[
 d(R)=\sum_{j\geq1}r_j(\ell^j-1),
 \quad
 l(R)=\sum_{j\geq1}r_j.
\]
For $U,R\in\cR$, write $U\leq R$ for componentwise inequality and $R-U$ for the componentwise difference. For $j\geq1$, let $\Delta_j$ denote the sequence having $1$ in position $j$ and zero elsewhere.

The following lemma collects facts from Voevodsky's structure theorem and Milnor basis calculations \cite[Theorem~12.6, Lemma~12.11, and Propositions~13.2 and~13.4]{voevodsky2003reduced}. When $\ell=2$, we use the vanishing of $\rho=[-1]\in H^{1,1}(\CC;\FF_2)$.

\begin{lemma}\label{lem:motivic-steenrod-structure}
\begin{enumerate}[(1)]
\item Let $\cA_0\subset\cA$ be the $\FF_\ell[\tau]$-subalgebra generated by the $Q_i$. Then $\cA_0$ is an exterior algebra over $\FF_\ell[\tau]$, and $\cA$ is free as a right $\cA_0$-module.

\item Let $\cA Q_0$ denote the left ideal generated by $Q_0$ and $(Q_0)$ the two-sided ideal it generates. The classes $P^R$ form an $\FF_\ell[\tau]$-basis of $\cA/(Q_0)$.

\item The following identities hold for $R\in\cR$ and $j\geq1$:
\begin{align*}
 Q_j&=Q_0c(P^{\Delta_j})-c(P^{\Delta_j})Q_0,\\
 Q_0c(P^R)&=
 \sum_{j:r_j>0}c(P^{R-\Delta_j})Q_0c(P^{\Delta_j})+aQ_0,
\end{align*}
for some $a\in\cA$. These are the motivic analogues of the identities in \cite[Lemma~2.6 and its proof, p.~151]{brown1966spectrum}.
\end{enumerate}
\end{lemma}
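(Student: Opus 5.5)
The plan is to import the classical arguments almost verbatim. Voevodsky's structure theorem makes $\cA$ look, over $\FF_\ell[\tau]$, like the classical Steenrod algebra, and every formula below is bihomogeneous. The key input is the description of the dual motivic Steenrod algebra over $\CC$. When $\ell$ is odd, or when $\ell=2$ and $\rho=0$, the dual is $\FF_\ell[\tau][\tau_0,\tau_1,\dots,\xi_1,\xi_2,\dots]/(\tau_i^2-\tau\xi_{i+1}\ \text{or } 0)$ \cite[Theorem~12.6]{voevodsky2003reduced}. In every case it is free over $\FF_\ell[\tau]$ on the monomials $\tau^E\xi^R$, and the Milnor basis $Q^E P^R$ is dual to these monomials.

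For (1), we would use \cite[Propositions~13.2 and~13.4]{voevodsky2003reduced}, which give $Q_iQ_j=-Q_jQ_i$ and $Q_i^2=0$. The $\epsilon=2$ products vanish because $\tau_i^2$ is expressed through $\xi_{i+1}$, and dually that produces a $P^R$ term rather than a $Q$ term; when $\ell=2$ we use $\rho=0$ to kill the remaining correction. These relations show that $\cA_0$ is exterior on the $Q_i$. The Milnor basis elements $Q^EP^R$ are linearly independent. Rewriting them as $P^R Q^E$ up to lower terms, via the commutation formula $P^RQ_k=Q_kP^R+\sum Q_{k+j}P^{R-\ell^k\Delta_j}$, a triangular change of basis shows that $\{P^R\}$ is a right $\cA_0$-basis of $\cA$.

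For (2), by (1) we have $\cA=\bigoplus_R P^R\cA_0$. The two-sided ideal $(Q_0)$ contains every $Q_j$, because of the recursive identity $Q_{j+1}=P^{\ell^j}Q_j-Q_jP^{\ell^j}$ \cite[Lemma~13.5]{voevodsky2003reduced}. Hence $(Q_0)=\bigoplus_R P^R\cdot\cA_0^{+}$, since that right ideal is already two-sided by the commutation formula. The quotient is then free on the images of the $P^R$.

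For (3), we would transcribe Brown--Peterson's Lemma~2.6 argument. First apply $c$ to the commutator identity $Q_j=P^{\Delta_j}Q_0-Q_0P^{\Delta_j}$ from \cite[Lemma~13.5/Prop.~13.4]{voevodsky2003reduced}. Using $c(Q_0)=-Q_0$ and the anti-multiplicativity of $c$, this gives the first identity up to sign. We then check the sign against $c(Q_j)=-Q_j$.

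For the second identity, compute $Q_0P^R-P^RQ_0=\sum_j Q_jP^{R-\Delta_j}$ via the Milnor product formula for $P^R Q_0$. Applying $c$ gives $c(P^R)Q_0-Q_0c(P^R)=\pm\sum_j c(P^{R-\Delta_j})Q_j$, with the $\tau$-dependence only entering through the $\tau_i^2$ relation, which does not affect products involving a single $Q$. Substituting the first identity for $Q_j$ yields
\[c(P^{R-\Delta_j})Q_0c(P^{\Delta_j})-c(P^{R-\Delta_j})c(P^{\Delta_j})Q_0.\]
The second term, together with $c(P^R)Q_0$, is absorbed into $aQ_0$. A subtle point is that $Q_0P^R-P^RQ_0$ only involves the terms with $r_j>0$, which matches the index set of the sum.

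The main obstacle is bookkeeping rather than concept. We must verify that Voevodsky's Milnor product formula over $\CC$ introduces no $\tau$- or $\rho$-correction terms in the specific products $P^RQ_0$ and $Q_0P^{\Delta_j}$. I would check this directly on the coproduct $\Delta\tau_0=\tau_0\otimes1+1\otimes\tau_0$ and $\Delta\xi_j=\sum\xi_{j-i}^{\ell^i}\otimes\xi_i$, dualizing. Only linear-in-$\tau_i$ terms are relevant, so the classical computation goes through unchanged.
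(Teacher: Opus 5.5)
Your proposal is correct and matches the paper's approach: the paper gives no separate argument and simply collects these facts from Voevodsky's dual Steenrod algebra and Milnor-basis results (using $\rho=0$ when $\ell=2$), transporting Brown--Peterson's Lemma~2.6, while you spell out the details (a triangular change of basis for (1), $(Q_0)=\cA\cA_0^+$ for (2), and applying $c$ to the commutation formula for (3)). One sign needs fixing: Milnor's formula reads $P^RQ_0-Q_0P^R=\sum_{j:r_j>0}Q_jP^{R-\Delta_j}$, which agrees with your identity $Q_j=P^{\Delta_j}Q_0-Q_0P^{\Delta_j}$ but not with the intermediate formula you wrote; applying $c$ with $c(Q_j)=-Q_j$ then gives $Q_0c(P^R)=c(P^R)Q_0+\sum_j c(P^{R-\Delta_j})Q_j$, which is exactly the sign needed for the stated identity.
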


For $s\geq0$, let $V_s$ be the free bigraded $\ZZ$-module on the symbols $R\in\cR$ with $l(R)=s$, placing $R$ in bidegree $(2d(R),d(R))$. Define the left $\cA$-module
\[
 M_s=(\cA/\cA Q_0)\otimes_{\ZZ}V_s.
\]
For $s\geq1$, let $d_s:M_s\to M_{s-1}$ be determined $\cA$-linearly by
\[
 d_s(1\otimes R)
   =\sum_{j:r_j>0}Q_j\otimes(R-\Delta_j).
\]
These maps are well defined because $Q_0Q_j=-Q_jQ_0$. Each $d_s$ has bidegree $(1,0)$, since every summand $Q_j\otimes(R-\Delta_j)$ has bidegree $(2d(R)+1,d(R))$. Let $\alpha:M_0\to\cA/(Q_0)$ be the quotient map, so that $\alpha(1\otimes(0,0,\ldots))=1$.

\begin{proposition}\label{prop:motivic-bp-resolution}
The maps $d_s$ and $\alpha$ form an exact sequence
\[
 \cdots\longrightarrow M_s\xrightarrow{d_s}M_{s-1}
 \longrightarrow\cdots\longrightarrow M_0
 \xrightarrow{\alpha}\cA/(Q_0)\longrightarrow0.
\]
The differentials $d_s$ preserve motivic weight and raise cohomological degree by one; the augmentation $\alpha$ has bidegree $(0,0)$.
\end{proposition}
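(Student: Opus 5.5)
We follow Brown--Peterson's proof of exactness \cite[Lemma~2.6 and Proposition~2.7]{brown1966spectrum}, keeping track of the weight and of the coefficient ring $\FF_\ell[\tau]$. The bidegree claims are already checked: every summand of $d_s(1\otimes R)$ has bidegree $(2d(R)+1,d(R))$, and $\alpha(1\otimes 0)=1$. So only exactness is at stake.

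First we check that $d_{s-1}d_s=0$ and $\alpha d_1=0$. The composite $d_{s-1}d_s(1\otimes R)$ is the sum over ordered pairs $j\ne k$ (together with $j=k$ when $r_j\ge2$) of $Q_kQ_j\otimes(R-\Delta_j-\Delta_k)$. This sum vanishes because the $Q_i$ anticommute and square to zero in the exterior algebra $\cA_0$ of \Cref{lem:motivic-steenrod-structure}(1). Next, $\alpha d_1(1\otimes\Delta_j)$ is the class of $Q_j$. By \Cref{lem:motivic-steenrod-structure}(3), $Q_j=Q_0c(P^{\Delta_j})-c(P^{\Delta_j})Q_0$ lies in $(Q_0)$, so this class is zero.

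The main step is exactness. Here we use the freeness in \Cref{lem:motivic-steenrod-structure}(1). Choose a right $\cA_0$-basis of $\cA$; we can take it to be $\{P^R\}$, or rather $\{c(P^R)\}$, as in the Milnor basis. Then $\cA\cong B\otimes_{\FF_\ell[\tau]}\cA_0$ with $B$ free over $\FF_\ell[\tau]$, and hence
\[
\cA/\cA Q_0\cong B\otimes_{\FF_\ell[\tau]}\cA_0/\cA_0Q_0 .
\]
The complex $M_\bullet$ then becomes $B\otimes_{\FF_\ell[\tau]}K_\bullet$, where $K_\bullet$ is the Koszul-type complex
\[
\Lambda_{\FF_\ell[\tau]}(Q_1,Q_2,\dots)\otimes\Gamma\text{-type symbols }R .
\]
Concretely, $K_\bullet$ is the Koszul complex for the exterior algebra on $Q_1,Q_2,\ldots$ against a polynomial-type (divided or ordinary) module on dual symbols. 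Its homology is $\FF_\ell[\tau]$ in degree $0$, exactly as in the classical computation, because that computation is purely algebraic and works over any commutative base ring. This is the standard acyclicity of the Koszul complex $\Lambda[Q_j]\otimes S[x_j]$. One checks that it works over $\ZZ$ using the contracting homotopy $1\otimes R\mapsto$ (sum over $j$ weighted appropriately); to avoid denominators it is better to filter by the largest index $j$ with $r_j>0$ and argue one variable at a time. Since the $\FF_\ell[\tau]$-module structure is free and central, this passes to $B\otimes K_\bullet$.

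The remaining problem, and the one I expect to be the real obstacle, is that the differentials are left $\cA$-linear and not $B$-linear on the nose. So the tensor decomposition is only compatible with a filtration. I would filter $M_\bullet$ by the length or degree of the $B$-part, or equivalently by $d(R)$. The associated graded then has differentials $c(P^{R'})\otimes Q_j\mapsto\cdots$ in which commutators of $Q_j$ with $c(P^{U})$ drop filtration; this is the content of the second identity in \Cref{lem:motivic-steenrod-structure}(3). Since $Q_jQ_0=-Q_0Q_j$ and the commutator terms lie in lower filtration, the associated graded complex is $B\otimes K_\bullet$, which is acyclic except in degree $0$. Its $H_0$ is $B\cong\cA/(Q_0)$, with basis $P^R$ by \Cref{lem:motivic-steenrod-structure}(2). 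The filtration is finite in each bidegree, since weights and degrees are bounded below and the relevant pieces are finitely generated over $\FF_\ell[\tau]$ in fixed bidegree. A spectral sequence or five-lemma argument then yields exactness of the original sequence, with $H_0$ identified with $\cA/(Q_0)$ via $\alpha$. Surjectivity of $\alpha$ is immediate.
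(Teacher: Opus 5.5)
Your core argument is the paper's. The paper applies the functor $\cA\otimes_{\cA_0}(-)$, which is exact because $\cA$ is free as a right $\cA_0$-module, to the standard resolution of $\FF_\ell[\tau]$ over $\cA_0/\cA_0Q_0$. It uses $\cA\otimes_{\cA_0}(\cA_0/\cA_0Q_0)\cong\cA/\cA Q_0$ and $\cA\otimes_{\cA_0}\FF_\ell[\tau]\cong\cA/\cA\cA_0^+=\cA/(Q_0)$. Your decomposition $\cA\cong B\otimes_{\FF_\ell[\tau]}\cA_0$, with your $B$ spanned by the $c(P^R)$, is the same thing written in a basis.

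However, the ``real obstacle'' in your last paragraph does not exist. Since $d_s$ is left $\cA$-linear, $d_s(x\otimes R)=\sum_j xQ_j\otimes(R-\Delta_j)$. So $d_s$ acts by \emph{right} multiplication by $Q_j$ on the $\cA$-factor, and right multiplication preserves each summand $c(P^R)\cA_0$ of your right-basis decomposition. Hence $M_\bullet\cong B\otimes_{\FF_\ell[\tau]}K_\bullet$ as complexes on the nose; this is just $M_\bullet=\cA\otimes_{\cA_0}K_\bullet$. Exactness in positive degrees then follows directly from that of $K_\bullet$, with no filtration or spectral sequence. Commutators $[Q_j,c(P^U)]$ would only appear if you used a left basis $\cA_0P^R$. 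Your justification of the filtration step also misreads the lemma. The second identity of \Cref{lem:motivic-steenrod-structure}(3) moves $Q_0$, not $Q_j$, past $c(P^R)$, and the paper uses it for the $k$-invariants $\delta c(P^R)\bone_{s-1}$, not for exactness. Delete that paragraph; the proof is complete without it.

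Two smaller points.
\begin{enumerate}
\item To identify $H_0$ with $\cA/(Q_0)$ via $\alpha$, you need the $c(P^R)$, not only the $P^R$, to form a basis of $\cA/(Q_0)$. This follows from \Cref{lem:motivic-steenrod-structure}(2) because $c$ is an anti-automorphism preserving $(Q_0)$. Equivalently, it follows from $\cA\cA_0^+=(Q_0)$, as in the paper.
\item Be precise about $K_\bullet$. With $d(1\otimes R)=\sum_jQ_j\otimes(R-\Delta_j)$ it is the divided-power complex $\Lambda\otimes\Gamma$. This complex is acyclic over any base, via the one-variable contraction $Q_j\gamma_n\mapsto\gamma_{n+1}$ (with $\gamma_n$ the divided powers). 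The polynomial version $\Lambda\otimes S$ with $d(x^R)=\sum_j r_jQ_jx^{R-\Delta_j}$ is \emph{not} acyclic mod $\ell$: in one variable, $x^\ell$ is a cycle but not a boundary. So ``the standard acyclicity of $\Lambda[Q_j]\otimes S[x_j]$'' is the wrong citation.
\end{enumerate}
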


\begin{proof}
Let $\cA_0^+=(Q_0,Q_1,\ldots)$ be the augmentation ideal of $\cA_0$ over $\FF_\ell[\tau]$. Then $\cA\cA_0^+=(Q_0)$; cf.\ \cite[equation~(2.4), p.~151]{brown1966spectrum}. Put $B=\cA_0/\cA_0Q_0$, the exterior algebra over $\FF_\ell[\tau]$ on the $Q_i$ with $i>0$.

The trivial $B$-module $\FF_\ell[\tau]$ admits the standard free resolution
\[
 \cdots\to B\otimes_{\ZZ}V_s
 \to B\otimes_{\ZZ}V_{s-1}\to\cdots
 \to B\otimes_{\ZZ}V_0
 \to\FF_\ell[\tau]\to0,
\]
with differential given by the formula defining $d_s$; see \cite[Lemma~2.7 and equation~(2.8), pp.~151--152]{brown1966spectrum} (note that $\FF_\ell[\tau]$ is flat over $\FF_\ell$). View this as a sequence of $\cA_0$-modules and apply the exact functor $\cA\otimes_{\cA_0}(-)$. The resulting sequence is the one in the statement, since
\[
 \cA\otimes_{\cA_0}B\simeq\cA/\cA Q_0,
 \quad
 \cA\otimes_{\cA_0}\FF_\ell[\tau]\simeq\cA/(Q_0).
\]
The bidegrees of $d_s$ and $\alpha$ follow immediately from their definitions.
\end{proof}

\subsection{A motivic lift of the Brown--Peterson tower}

The goal of this subsection is to lift the original construction of the Brown--Peterson spectrum \cite{brown1966spectrum} to $\sh(\CC)$. The key lemma required for describing the inductive $k$-invariants and transgressions is the following.

\begin{lemma}[cf.\ {\cite[Lemma~3.1]{brown1966spectrum}}]\label{lem:bockstein-lifting}
Let $F\xrightarrow{i}E\xrightarrow{\pi}B$ be a fiber sequence of motivic spectra, and let
\[
 \bar{\btau}\colon H^{a,b}(F;\FF_\ell)\longrightarrow
 H^{a+1,b}(B;\FF_\ell)
\]
be its mod-$\ell$ transgression. If $v\in H^{a,b}(F;\FF_\ell)$ and $u\in H^{a+1,b}(B;\ZZ)$ satisfy $\bar{\btau}(v)=\bar u$, then there exists $w\in H^{a+1,b}(E;\ZZ)$ such that
\[
 \pi^*u=\ell w,\quad i^*w=\delta v.
\]
Here $\bar x$ denotes the mod-$\ell$ reduction of an integral class $x$, and $\delta\colon H^{a,b}(-;\FF_\ell)\to H^{a+1,b}(-;\ZZ)$ is the integral Bockstein associated with
\[
 0\longrightarrow\ZZ\xrightarrow{\ell}\ZZ
 \longrightarrow\FF_\ell\longrightarrow0.
\]
The mod-$\ell$ reduction of $\delta$ is the Milnor primitive $Q_0=\beta$ \cite[Lemma~13.5]{voevodsky2003reduced}; in particular, $Q_0\bar x=0$ for every integral class $x$.
\end{lemma}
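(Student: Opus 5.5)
We argue as in the proof of \cite[Lemma~3.1]{brown1966spectrum}, using only the triangulated structure of $\sh(\CC)$ and the long exact sequences attached to the fiber sequence and to the coefficient sequence $\ZZ\xrightarrow{\ell}\ZZ\to\FF_\ell$. Since $\sh(\CC)$ is triangulated with translation $\Sigma^{1,0}$, the fiber sequence extends to a distinguished triangle $F\xrightarrow{i}E\xrightarrow{\pi}B\xrightarrow{\partial}\Sigma^{1,0}F$. We take the transgression to be the map induced by $\partial$: in cohomology with any coefficients $\Lambda$ there is a long exact sequence
\[
\cdots\to H^{a,b}(E;\Lambda)\xrightarrow{i^*}H^{a,b}(F;\Lambda)\xrightarrow{\partial^*}H^{a+1,b}(B;\Lambda)\xrightarrow{\pi^*}H^{a+1,b}(E;\Lambda)\xrightarrow{i^*}\cdots,
\]
natural in $\Lambda$, and $\bar{\btau}=\partial^*$ for $\Lambda=\FF_\ell$. (If the transgression is only defined on a subgroup with indeterminacy, as in the classical unstable setting, then in the stable triangulated setting it is exactly $\partial^*$. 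We will state this identification explicitly.) The coefficient sequence gives, for every motivic spectrum $X$, a long exact sequence
\[
H^{a,b}(X;\FF_\ell)\xrightarrow{\delta}H^{a+1,b}(X;\ZZ)\xrightarrow{\ell}H^{a+1,b}(X;\ZZ)\xrightarrow{\bar{\ }}H^{a+1,b}(X;\FF_\ell),
\]
natural in $X$, coming from the cofiber sequence $\hz\xrightarrow{\ell}\hz\to\hf_\ell$ in $\sh(\CC)$.

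The key steps are as follows. First, $\overline{\pi^*u}=\pi^*\bar u=\pi^*\partial^*v=0$ because $\pi^*\partial^*=0$. By exactness of the coefficient sequence on $E$, there exists $w_0\in H^{a+1,b}(E;\ZZ)$ with $\ell w_0=\pi^*u$.

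Second, we correct $w_0$ so that $i^*w=\delta v$. Consider $x=i^*w_0-\delta v\in H^{a+1,b}(F;\ZZ)$. Naturality of the Bockstein with respect to $\partial$ (a map $B\to\Sigma^{1,0}F$, with $\delta$ commuting with $\partial^*$ up to the sign convention for suspension) gives $\partial^*x=\partial^*i^*w_0-\partial^*\delta v=-\delta\partial^*v=-\delta\bar u=0$ up to sign, since $\delta\bar u=0$ for integral $u$. Hence, by exactness on $F$ one degree up, $x$ lies in the image of $i^*\colon H^{a+1,b}(E;\ZZ)\to H^{a+1,b}(F;\ZZ)$. This is not yet what we want, because subtracting a preimage $y$ of $x$ from $w_0$ destroys $\ell w=\pi^*u$ unless $\ell y=0$.

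Instead, we argue as Brown--Peterson do, via the octahedral/$3\times3$ lemma. We form the $3\times 3$ diagram obtained by smashing the triangle $F\to E\to B$ with the coefficient triangle $\hz\xrightarrow{\ell}\hz\to\hf_\ell$, equivalently by mapping into it. Since $\partial^*v=\bar u$, the pair $(v,u)$ defines a compatible map from the triangle. The $3\times3$ lemma in a triangulated category then produces $w\colon E\to\Sigma^{a+1,b}\hz$ completing a morphism of triangles, with $w$ restricting to $\delta v$ on $F$ and $\ell w=\pi^*u$ on $B$. Concretely, we can use the class $\bar w_0$ and the freedom to modify $w_0$ by $\delta$ of classes $z$ in $H^{a,b}(E;\FF_\ell)$, which does not change $\ell w_0$. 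The condition becomes $x\in i^*\delta H^{a,b}(E;\FF_\ell)+\{\text{indeterminacy}\}$. We then realize $w$ by choosing $w$ as the composite induced on cofibers in the $3\times 3$ diagram, where the octahedral axiom guarantees both compatibilities simultaneously.

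We expect this second step to be the main obstacle. Getting both identities at once, rather than each up to an error term, requires the $3\times3$ lemma (a May-style ``Verdier'' argument) applied to the commuting square $\partial\circ(\cdot)$ versus $(v,u)$, together with careful tracking of signs. We expect the signs to be harmless, since the statement only asserts existence and we may replace $v$ by $-v$ if needed.

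Finally, the last assertions of the statement are citations. $Q_0\bar x=\overline{\delta\bar x}=0$ because $\delta\bar x=0$ by exactness of the coefficient sequence, and the identification of $\bar\delta$ with $\beta=Q_0$ is \cite[Lemma~13.5]{voevodsky2003reduced}.
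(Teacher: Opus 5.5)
Your Step 1 and your explanation of why the naive correction of $w_0$ fails are both correct. The decisive step, however, is never carried out: producing one $w$ that satisfies both identities at once. You flag it as the main obstacle and appeal to a $3\times3$ lemma or the octahedral axiom, applied to a diagram obtained by ``smashing'' the fiber triangle with the coefficient triangle. But you never say which diagram this is or which square is being completed. Your ``concrete'' reduction to showing $x\in i^*\delta H^{a,b}(E;\FF_\ell)$ is also left unproved. It cannot be settled by chasing the two long exact sequences alone: these are unchanged if $\partial$ is replaced by $-\partial$, whereas for odd $\ell$ the conclusion is not (see below). Your phrase ``the pair $(v,u)$ defines a compatible map from the triangle'' points the right way. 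The missing idea is to make it precise, and then neither the $3\times3$ lemma nor the octahedral axiom is needed.

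Put $T=\Sigma^{a,b}\hz$ and $\overline T=\Sigma^{a,b}\hf_\ell$, with $\rho\colon T\to\overline T$ the reduction. Rotate the fiber triangle backwards and the coefficient triangle forwards:
\[
\Sigma^{-1,0}B\xrightarrow{-\Sigma^{-1,0}\partial}F\xrightarrow{i}E\xrightarrow{\pi}B,
\qquad
T\xrightarrow{\rho}\overline T\xrightarrow{\delta}\Sigma^{1,0}T\xrightarrow{-\ell}\Sigma^{1,0}T.
\]
Desuspending the hypothesis $(\Sigma^{1,0}\rho)\circ u=(\Sigma^{1,0}v)\circ\partial$ gives exactly $\rho\circ(-\Sigma^{-1,0}u)=v\circ(-\Sigma^{-1,0}\partial)$. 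That is, the left square with vertical maps $-\Sigma^{-1,0}u$ and $v$ commutes. The morphism axiom (TR3) then supplies $w\colon E\to\Sigma^{1,0}T$ with $w\circ i=\delta\circ v$ and $(-\ell)\circ w=(-u)\circ\pi$. These are precisely $i^*w=\delta v$ and $\ell w=\pi^*u$, so your Step 1 becomes unnecessary.

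This also shows that your treatment of signs is wrong. Replacing $v$ by $-v$ alone changes the hypothesis. The only symmetry of the statement is $(v,u,w)\mapsto(-v,-u,-w)$, and it cannot remove a relative sign between the two identities. For odd $\ell$ the conclusion genuinely depends on the convention $\bar{\btau}(v)=(\Sigma^{1,0}v)\circ\partial$. With that convention the two rotation signs ($-\Sigma^{-1,0}\partial$ and $-\ell$) cancel as above, but this has to be checked rather than assumed.
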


\begin{proof}
For simplicity, put
\[
T=\Sigma^{a,b}\hz,
\quad \overline T=\Sigma^{a,b}\hf_\ell.
\]
Thus $v$ and $u$ are represented by maps $v:F\to\overline T$ and $u:B\to\Sigma^{1,0}T$. Let $\partial:B\to\Sigma^{1,0}F$ be the connecting map of the distinguished triangle associated with the given fiber sequence. We use the transgression convention
\[
\bar{\btau}(v)=(\Sigma^{1,0}v)\circ\partial.
\]
If $\rho:T\to\overline T$ denotes coefficient reduction, then $\bar u=(\Sigma^{1,0}\rho)\circ u$. The hypothesis therefore says that
\[
(\Sigma^{1,0}\rho)\circ u=(\Sigma^{1,0}v)\circ\partial.
\]

The coefficient sequence gives the distinguished triangle $T\xrightarrow{\ell}T\xrightarrow{\rho}\overline T\xrightarrow{\delta}\Sigma^{1,0}T$. Rotate the fiber triangle backwards once and the coefficient triangle forwards once. This gives the two rows of the diagram
\[
\begin{tikzcd}
\Sigma^{-1,0}B \ar[r,"-\Sigma^{-1,0}\partial"] \ar[d,"-\Sigma^{-1,0}u"']
& F \ar[r,"i"] \ar[d,"v"]
& E \ar[r,"\pi"] \ar[d,dashed,"w"]
& B \ar[d,"-u"] \\
T \ar[r,"\rho"]
& \overline T \ar[r,"\delta"]
& \Sigma^{1,0}T \ar[r,"-\ell"]
& \Sigma^{1,0}T.
\end{tikzcd}
\]
The left square commutes by the preceding equality. The morphism axiom for distinguished triangles therefore supplies the dashed map $w:E\to\Sigma^{1,0}T$ making the whole diagram commute; the last vertical map is the $(1,0)$-suspension of $-\Sigma^{-1,0}u$, hence is $-u$. The middle and right squares give
\[
w\circ i=\delta\circ v,
\quad (-\ell)\circ w=(-u)\circ\pi.
\]
The map $w$ represents a class in $H^{a+1,b}(E;\ZZ)$, and these equalities say exactly that $i^*w=\delta v$ and $\ell w=\pi^*u$.
\end{proof}

We now combine \Cref{prop:motivic-bp-resolution} and \Cref{lem:bockstein-lifting} to construct a motivic version of Brown--Peterson's tower.

For each $s\geq0$, let the Eilenberg--MacLane spectrum associated with the bigraded lattice $V_s$ be
\[
 K(V_s)=\prod_{l(R)=s}
 \Sigma^{2d(R),d(R)}\hz.
\]
Note that for every $N$ only finitely many sequences in the index set $\{R:l(R)=s\}$ satisfy $d(R)\le N$. Thus the canonical map from the coproduct to the product is an equivalence in $\sh(\CC)$ by \cite[proof of Proposition~5.5]{hoyois2015cobordism}:
\[
\bigoplus_{l(R)=s}\Sigma^{2d(R),d(R)}\hz
\xrightarrow{\simeq}K(V_s).
\]

Let $\alpha_R$ denote the universal integral cohomology class of the factor indexed by $R$; thus $\alpha_R\in H^{2d(R),d(R)}(K(V_s);\ZZ)$. The coefficient cofiber sequence and \Cref{thm:bistable-operations} give
\[
 H^{*,*}(\hz;\FF_\ell)
   \simeq \cA/\cA Q_0.
\]
Using the direct-sum description of $K(V_s)$ and the fact that each fixed bidegree receives contributions from only finitely many summands, we obtain
\[
 H^{*,*}(K(V_s);\FF_\ell)\simeq M_s.
\]

\begin{theorem}[{cf. \cite[Theorem~1.1]{brown1966spectrum}}]\label{thm:motivic-bp-construction}
There are motivic spectra $\bp_s^{\mathrm{mot}}$, $s\geq0$, and classes $\bone_s\in H^{0,0}(\bp_s^{\mathrm{mot}};\FF_\ell)$ satisfying the following conditions. 
\begin{enumerate}
	\item $\bp_0^{\mathrm{mot}}=K(V_0)=\hz$, with $\bone_0=\bar\alpha_{(0,0,\ldots)}$. 
	\item For $s>0$ there is a fiber sequence
\[
 K(V_s)\longrightarrow \bp_s^{\mathrm{mot}}
 \xrightarrow{\pi_s}\bp_{s-1}^{\mathrm{mot}},
\]
such that $\bone_s=\pi_s^*\bone_{s-1}$ and for every $R$ with $l(R)=s$ one has
\[
 \ell^{s-1}\bk_R^{s-1}=\delta c(P^R)\bone_{s-1}.
\]
Here, if $\btau_s$ denotes the integral transgression, then $\bk_R^{s-1}$ is the class given by
\[
\bk_R^{s-1}:=\btau_s(\alpha_R)\in H^{2d(R)+1,d(R)}(\bp_{s-1}^{\mathrm{mot}};\ZZ).
\]
	\item For $s>0$, the map
\[
 \cA\longrightarrow H^{*,*}(\bp_s^{\mathrm{mot}};\FF_\ell),
 \quad a\longmapsto a\bone_s,
\]
has kernel $(Q_0)$, and
\[
 H^{p,q}(\bp_s^{\mathrm{mot}};\FF_\ell)
   =\bigl((\cA/(Q_0))\bone_s\bigr)^{p,q}
 \quad\text{for }p<2(s+1)(\ell-1).
\]
	\item Betti realization identifies this tower with the classical integral Brown--Peterson tower $\bp_s^{\mathrm{top}}$ of \cite[Theorem~1.1]{brown1966spectrum}:
\[
 \real(\bp_s^{\mathrm{mot}})
 \simeq \bp_s^{\mathrm{top}},
\]
compatibly with the tower maps.
\end{enumerate}
\end{theorem}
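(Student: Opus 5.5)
We will follow Brown--Peterson's inductive construction verbatim in $\sh(\CC)$, proving (1)--(3) simultaneously by induction on $s$ and checking (4) at the end. For $s=0$ set $\bp_0^{\mathrm{mot}}=\hz$ and $\bone_0=\bar\alpha_{(0,0,\ldots)}$. By \Cref{thm:bistable-operations}, $H^{*,*}(\hz;\FF_\ell)\simeq\cA/\cA Q_0$, so $a\mapsto a\bone_0$ has kernel $\cA Q_0$. This is the base of the inductive hypothesis, taken in Brown--Peterson's form: the kernel of $a\mapsto a\bone_{s-1}$ is $(Q_0)$ plus the left ideal generated by the $c(P^R)$ with $l(R)\ge s$, modulo classes in degrees above $2s(\ell-1)$.

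For the inductive step, assume $\bp_{s-1}^{\mathrm{mot}}$ is constructed. For each $R$ with $l(R)=s$, \Cref{lem:motivic-steenrod-structure}(3) together with the inductive description of $\bp_{s-1}^{\mathrm{mot}}$ and its fibers should show that $\delta c(P^R)\bone_{s-1}\in H^{2d(R)+1,d(R)}(\bp_{s-1}^{\mathrm{mot}};\ZZ)$ is divisible by $\ell^{s-1}$. We will obtain this inductively using \Cref{lem:bockstein-lifting}. Writing $Q_0c(P^R)=\sum_j c(P^{R-\Delta_j})Q_0c(P^{\Delta_j})+aQ_0$, the class $\delta c(P^{R-\Delta_j})\bone_{s-2}$ transgresses appropriately, and each application of the Bockstein lifting lemma divides by one more factor of $\ell$.

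We then choose $\bk_R^{s-1}$ with $\ell^{s-1}\bk_R^{s-1}=\delta c(P^R)\bone_{s-1}$ and assemble these classes into a map $\bp_{s-1}^{\mathrm{mot}}\to\Sigma^{1,0}K(V_s)$. This is possible because $K(V_s)$ is both a product and a coproduct of the $\Sigma^{2d(R),d(R)}\hz$. Define $\bp_s^{\mathrm{mot}}$ as its fiber. All maps involved have bidegree $(2d(R)+1,d(R))$, because $\delta$ has bidegree $(1,0)$ and $c(P^R)$ has bidegree $(2d(R),d(R))$; this is the weight bookkeeping that matters later.

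Part (3) comes from the Serre-type long exact sequence in mod-$\ell$ cohomology of the cofiber sequence $K(V_s)\to\bp_s^{\mathrm{mot}}\to\bp_{s-1}^{\mathrm{mot}}$, valid in a stable range. Using $H^{*,*}(K(V_s);\FF_\ell)\simeq M_s$ and the exactness of the resolution in \Cref{prop:motivic-bp-resolution}, we identify the image of the transgression with $d_s$ applied to the $\alpha_R$. This shows that the new relations killed are exactly $c(P^R)\bone$ for $l(R)=s$, and that no new classes appear below degree $2(s+1)(\ell-1)$.

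We expect this step to be the main obstacle. It requires $\FF_\ell[\tau]$-freeness of everything so that the classical linear algebra of Brown--Peterson carries over with $\FF_\ell$ replaced by $\FF_\ell[\tau]$. We also need the connectivity estimate that $M_s$ vanishes in cohomological degree below $2s(\ell-1)$ plus weight considerations, so that the stable-range long exact sequence applies. The key point is that $\tau$ is a non-zero-divisor, so exactness over $\FF_\ell$ lifts. Rather than working with a spectral sequence, we will work directly with the long exact sequence in a bounded range.

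For (4), we use the facts that Betti realization is triangulated, sends $\hz$ to $\hz$ by \Cref{thm:bistable-operations}(2), and is compatible with $Q_0$, $P^i$ and the Bockstein by \Cref{thm:bistable-operations}(3). Hence it sends $\bk_R^{s-1}$ to classes satisfying Brown--Peterson's defining relation $\ell^{s-1}k=\delta c(P^R)\bone$. Their uniqueness argument, that such $k$-invariants are determined up to choices not affecting the homotopy type in the relevant range, then identifies $\real(\bp_s^{\mathrm{mot}})$ with $\bp_s^{\mathrm{top}}$ compatibly with the tower maps. Alternatively, we can make the choices in the topological tower equal to the realized ones.
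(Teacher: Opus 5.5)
Your overall route matches the paper's: run Brown--Peterson's induction in $\sh(\CC)$ using \Cref{lem:bockstein-lifting}, \Cref{lem:motivic-steenrod-structure}(3) and \Cref{prop:motivic-bp-resolution}, then realize. However, the inductive mechanism you describe is wrong, and the induction does not close as written. You assume that the kernel of $a\mapsto a\bone_{s-1}$ contains the $c(P^R)$ with $l(R)\ge s$, and you conclude that stage $s$ ``kills $c(P^R)\bone$ for $l(R)=s$''. Both contradict (3). Since $c$ preserves $(Q_0)$ and the $P^R$ form a basis of $\cA/(Q_0)$, we have $c(P^R)\notin(Q_0)$, so $c(P^R)\bone_s\ne0$ for every $s$.

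The even part $(\cA/(Q_0))\bone_s$ survives at every stage. What stage $s$ kills is the $\cA$-span of the odd classes $\bar{\bk}^{s-1}_R$ in bidegree $(2d(R)+1,d(R))$. For $s=1$ these are $Q_0c(P^{\Delta_j})\bone_0=Q_j\bone_0$. For $s\ge2$ they are classes whose restriction to the fiber $K(V_{s-1})$ is $d_s(1\otimes R)$, so they kill the part of $H^{*,*}(\bp^{\mathrm{mot}}_{s-1};\FF_\ell)$ detected on the fiber, which restricts onto $\ker d_{s-1}=\mathrm{im}\,d_s\subset M_{s-1}$. The invariant you must carry, in all degrees and not just below $2s(\ell-1)$, is therefore:
\begin{enumerate}
\item the kernel is exactly $(Q_0)$ (it is $\cA Q_0$ when $s-1=0$);
\item restriction to the fiber identifies $H^{*,*}(\bp^{\mathrm{mot}}_{s-1};\FF_\ell)/(\cA/(Q_0))\bone_{s-1}$ with $\mathrm{im}\,d_s$;
\item the new $k$-invariants satisfy $i_{s-1}^*\circ\bar{\btau}_s=d_s$.
\end{enumerate}

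The divisibility step needs the same correction: a class on the base $\bp^{\mathrm{mot}}_{s-2}$ does not ``transgress''. What makes \Cref{lem:bockstein-lifting} applicable at each stage is parity. Because $\cA/(Q_0)$ vanishes in odd first degree, the invariant shows that every odd mod-$\ell$ class on $\bp^{\mathrm{mot}}_{s-1}$ is detected on the fiber, hence equals $\bar{\btau}_s(v)$ for some $v$. (For $s=1$, use instead that the odd part of $\cA/\cA Q_0$ lies in $(Q_0)/\cA Q_0$.) Now apply the lemma to $K(V_s)\to\bp^{\mathrm{mot}}_s\to\bp^{\mathrm{mot}}_{s-1}$ and the integral class $u=\ell^{-(s-1)}\delta c(P^R)\bone_{s-1}$ with $l(R)\ge s+1$. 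This gives $w$ with $\ell^s w=\delta c(P^R)\bone_s$ and $i^*w=\delta v$. The Milnor identity of \Cref{lem:motivic-steenrod-structure}(3) shows inductively that one may take $v=\sum_{R'\le R,\,l(R')=s}c(P^{R-R'})\bar\alpha_{R'}$, so $Q_0v=d_{s+1}(1\otimes R)$ when $l(R)=s+1$.

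Thus $\bk^s_R$ must be this particular $w$, not an arbitrary solution of $\ell^s\bk=\delta c(P^R)\bone_s$. Such solutions differ by $\ell^s$-torsion, which can change the fiber restriction on which $i_s^*\circ\bar{\btau}_{s+1}=d_{s+1}$, and hence (3), depends. With this in place, (3) follows from the long exact sequence of the cofiber sequence, which in $\sh(\CC)$ is exact in all bidegrees, so no stable range or connectivity estimate is needed. The $\FF_\ell[\tau]$-linear algebra you flag as the main obstacle is already supplied by \Cref{lem:motivic-steenrod-structure}(1) and \Cref{prop:motivic-bp-resolution}.

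For (4), the relation $\ell^{s-1}\bk=\delta c(P^R)\bone_{s-1}$ does not determine $\bk$, so there is no uniqueness to invoke. Your alternative is what the paper does: realize the motivic tower and check Brown--Peterson's conditions, using that $\real$ preserves the coproduct $K(V_s)$, the coefficient triangle and the Milnor operations.
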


\begin{proof}
We follow Brown--Peterson's construction \cite[Theorem~1.1]{brown1966spectrum} verbatim. The induction uses \Cref{lem:bockstein-lifting} to construct the successive integral $k$-invariants, together with the algebraic identities of \Cref{lem:motivic-steenrod-structure} and the exact resolution of \Cref{prop:motivic-bp-resolution}. Keeping track of the bidegrees of the Milnor operations, with $|\delta|=(1,0)$, gives $|\bk_R^{s-1}|=(2d(R)+1,d(R))$; the transgression formula and cohomology statements follow from the same induction.

Betti realization preserves coproducts and sends each summand to $\Sigma^{2d(R)}\hz$ by \Cref{thm:bistable-operations}. Using the direct-sum description of $K(V_s)$ above, we therefore obtain, in the classical stable homotopy category,
\[
\real K(V_s)
\simeq\bigoplus_{l(R)=s}\Sigma^{2d(R)}\hz
\simeq\prod_{l(R)=s}\Sigma^{2d(R)}\hz,
\]
which is the corresponding layer of the classical Brown--Peterson tower.

By \Cref{thm:bistable-operations}, Betti realization also respects the coefficient sequence and Milnor operations used in the construction. Together with preservation of fiber sequences, this identifies the two towers by induction.
\end{proof}

\begin{remark}
	The $\ell$-localization of the homotopy limit $\bp^{\mathrm{mot}}=\holim_s \bp_s^{\mathrm{mot}}$ is equivalent to the image of the Quillen idempotent of $\mgl_{(\ell)}$. Indeed the Thom class of this summand gives a map to $\hz_{(\ell)}$, and this map can be lifted to $\bp^\mathrm{mot}_{(\ell)}$ by the construction of $\bp^\mathrm{mot}$. Finally, one checks that the map is an isomorphism on cohomology. However we do not need this comparison for the current work.
\end{remark}

\subsection{Vanishing of Thom obstructions}

\begin{theorem}\label{thm:cycle-bp-lifting}
Let $M$ be a smooth equidimensional quasi-projective complex variety, and let $X\subset M$ be closed. For every $q\ge0$, each class $x_0\in H_X^{2q,q}(M;\ZZ)$ admits compatible lifts $x_s\in (\bp_s^{\mathrm{mot}})_X^{2q,q}(M)$ through the motivic Brown--Peterson tower for all $s\ge0$.
\end{theorem}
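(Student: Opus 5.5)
We argue by induction on $s$, lifting one stage of the tower at a time. We use the fiber sequences $K(V_s)\to\bp_s^{\mathrm{mot}}\xrightarrow{\pi_s}\bp_{s-1}^{\mathrm{mot}}$ of \Cref{thm:motivic-bp-construction} and the long exact sequences they induce on cohomology with supports.

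Set up the cohomology theory first. For a motivic spectrum $E$, put $E_X^{p,q}(M)=[\Sigma^\infty M/(M-X),\Sigma^{p,q}E]$. A distinguished triangle $F\to E\to B\xrightarrow{\partial}\Sigma^{1,0}F$ then gives a long exact sequence
\[
\cdots\to E_X^{p,q}(M)\to B_X^{p,q}(M)\to F_X^{p+1,q}(M)\to\cdots.
\]
The base case is $\bp_0^{\mathrm{mot}}=\hz$, so $x_0$ is the given class. For the inductive step, suppose $x_{s-1}\in(\bp_{s-1}^{\mathrm{mot}})_X^{2q,q}(M)$ has been constructed. The obstruction to lifting it along $\pi_s$ is its image under the connecting map, namely $\partial_*x_{s-1}\in (\Sigma^{1,0}K(V_s))_X^{2q,q}(M)$.

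Since $K(V_s)\simeq\bigoplus_{l(R)=s}\Sigma^{2d(R),d(R)}\hz\simeq\prod_{l(R)=s}\Sigma^{2d(R),d(R)}\hz$, the obstruction decomposes into components. The component indexed by $R$ is the pullback of the $k$-invariant $\bk_R^{s-1}=\btau_s(\alpha_R)$ along $x_{s-1}$, and it lies in
\[
H_X^{2q+2d(R)+1,\,q+d(R)}(M;\ZZ).
\]
This uses that $\Sigma^\infty M/(M-X)$ is compact, so maps into the product are products of maps into the factors. Each component therefore lies in a group $H_X^{a,b}(M;\ZZ)$ with $a=2b+1>2b$ and $b\ge0$.

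The key input is the vanishing $H_X^{a,b}(M;\ZZ)=0$ for $a>2b$, which we plan to prove as follows. Use the localization sequence
\[
H_X^{a,b}(M)\to H^{a,b}(M)\to H^{a,b}(M-X).
\]
Alternatively, identify $H_X^{a,b}(M;\ZZ)$ via purity and localization with Borel--Moore motivic homology of $X$, i.e.\ higher Chow groups $\CH_{\dim M-b}(X,2b-a)$. These vanish for negative simplicial degree $2b-a<0$. This localization/identification is valid for arbitrary closed $X$ in smooth $M$ over $\CC$ via Bloch's localization theorem, so it introduces no resolution. Alternatively, one can argue by induction on a stratification of $X$ by smooth locally closed pieces, using purity $H_Z^{a,b}(M)\simeq H^{a-2c,b-c}(Z)$ and the vanishing $H^{a,b}(Z)=0$ for $a>2b$ on smooth $Z$, together with the excision long exact sequences.

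With the vanishing in hand, every component of the obstruction is zero, so $x_{s-1}$ lifts to some $x_s$ with $\pi_{s*}x_s=x_{s-1}$. This gives compatible lifts for all $s$; no $\lim^1$ issue arises, since the statement only asks for a compatible sequence.

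The main obstacle is making sure the obstruction genuinely lands in these groups: it requires the identification of the $k$-invariant bidegrees from \Cref{thm:motivic-bp-construction}, which passes through \Cref{thm:bistable-operations}, and the product/coproduct compatibility of $K(V_s)$ with supports. Rigorously citing the vanishing with supports also needs care. The stratification argument above is the route I would write out, since it uses only purity and the smooth case.
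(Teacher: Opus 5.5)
Your proof is correct and has the paper's skeleton. You lift stage by stage, observe that the obstruction to lifting $x_{s-1}$ has $R$-component $x_{s-1}^*\bk_R^{s-1}\in H_X^{2q+2d(R)+1,q+d(R)}(M;\ZZ)$, and kill it with the vanishing $H_X^{a,b}(M;\ZZ)=0$ for $a>2b$, $b\ge0$. The difference lies only in how you prove that vanishing.

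The paper takes your second option. It identifies $H_X^{a,b}(M;\ZZ)\simeq\CH_{\dim M-b}(X,2b-a)$ via higher Chow groups with supports, which rests on Bloch's localization theorem. It then uses that Bloch's complex lives in nonnegative degrees.

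Your preferred stratification route is a genuinely different proof of that lemma, and it works. You filter $X$ by closed $X_i$ with smooth differences; generic smoothness suffices, so no resolution is needed. You apply Morel--Voevodsky purity and the Thom isomorphism to $X_i-X_{i+1}$, viewed as a closed subscheme of the smooth open $M-X_{i+1}$. This must be done componentwise in the codimension, and weights $b-c<0$ also give zero. You then assemble through $H_{X_{i+1}}^{a,b}(M)\to H_{X_i}^{a,b}(M)\to H_{X_i-X_{i+1}}^{a,b}(M-X_{i+1})$. This trades the moving lemma behind Chow groups with supports for homotopy purity, and it uses the Chow comparison only on smooth schemes. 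The cost is length compared with the paper's one-line citation.

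Your first option also works once completed; the three-term fragment you wrote is not enough by itself. In the full sequence $H^{a-1,b}(M)\to H^{a-1,b}(M-X)\to H^{a,b}_X(M)\to H^{a,b}(M)=0$, the second term vanishes when $a>2b+1$. When $a=2b+1$, the first map is the surjection $\CH^b(M)\to\CH^b(M-X)$. This is perhaps the most economical argument of all.

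A minor point: splitting a map into $K(V_s)=\prod_R\Sigma^{2d(R),d(R)}\hz$ into components is just the universal property of the product. Compactness of $\Sigma^\infty M/(M-X)$ is not needed there.
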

Here $H_X^{*,*}(M;\ZZ)$ denotes the reduced motivic cohomology of $M/(M-X)$, and we use the analogous notation for the other theories.
\begin{proof}
Write $m=\dim M$. For $b\ge0$ and $a>2b$, the standard comparison with higher Chow groups with supports \cite[Theorem~19.1 and Properties~17.4(2)]{mazza2006lectures} gives
\[
H_X^{a,b}(M;\ZZ)\simeq\CH_{m-b}(X,2b-a)=0,
\]
where the Chow groups are indexed by dimension. The vanishing follows from $2b-a<0$, since Bloch's cycle complex is concentrated in nonnegative homological degrees \cite[Definition~17.1]{mazza2006lectures}.

By \Cref{thm:motivic-bp-construction}, the successive lifting obstructions have components in the groups
\[
H_X^{2q+2d(R)+1,q+d(R)}(M;\ZZ),
\]
which vanish for degree reasons.
\end{proof}



\begin{theorem}
Let $X$ be a projective complex variety and let $\sigma\in H_{2d}(X(\CC);\ZZ)$ be the cycle class of an algebraic cycle. Then there exist a closed stably almost complex manifold $Y$ of real dimension $2d$ and a continuous map $f: Y\to X(\CC)$ such that $f_*[Y]=\sigma$.

In particular, if $X$ is pure-dimensional, there exists a closed stably complex manifold $\widetilde{X}$ and a continuous map $\pi:\widetilde{X}\to X(\CC)$ such that $\pi_*[\widetilde{X}]=[X]$.
\end{theorem}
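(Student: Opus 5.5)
The plan is to embed $X$ in a smooth projective variety $M=\proj^N$ and work with the cycle class in motivic cohomology with supports. Let $Z=\sum n_iZ_i$ be an algebraic cycle of dimension $d$ on $X$ with class $\sigma$, and set $q=N-d$. The cycle $Z$ defines a class
\[
x_0\in H_X^{2q,q}(M;\ZZ)\simeq\CH_d(X),
\]
using the comparison with higher Chow groups with supports already cited in the proof of \Cref{thm:cycle-bp-lifting}. Under Betti realization, $x_0$ maps to a class in $H^{2q}(M(\CC),M(\CC)-X(\CC);\ZZ)$. Alexander--Lefschetz duality identifies this group with $H_{2d}(X(\CC);\ZZ)$, since $X(\CC)$ is a compact ENR inside the closed manifold $M(\CC)$. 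The first step is to check that this realized class is the topological cycle class $\sigma$. This is a compatibility of the motivic cycle class map with the topological one, and I would take it as standard.

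The second step applies \Cref{thm:cycle-bp-lifting}. This gives compatible lifts $x_s\in(\bp_s^{\mathrm{mot}})_X^{2q,q}(M)$ for every $s$. I would then apply Betti realization. By \Cref{thm:motivic-bp-construction}(4), this produces compatible lifts of the realized class through the classical integral Brown--Peterson tower $\bp_s^{\mathrm{top}}$, with cohomology taken relative to $(M(\CC),M(\CC)-X(\CC))$. To pass to the limit $\bp^{\mathrm{top}}=\holim_s\bp_s^{\mathrm{top}}$, I would use the Milnor sequence. The relevant $\varprojlim^1$ term vanishes because the pair is a finite complex up to homotopy and the fibers $K(V_s)$ are $(2s(\ell-1)-1)$-connected. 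Hence in any fixed degree the tower stabilizes for $s$ large, and the compatible lifts assemble to a class in $\bp^{2q}$ of the pair.

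The third step passes from $\bp$ at each prime to $\MU$, and this is the step I expect to be the main obstacle. I would argue with the Atiyah--Hirzebruch spectral sequence for $\MU^*(M(\CC),M(\CC)-X(\CC))$. The obstructions to lifting $\sigma$ to $\MU$ are torsion classes, namely the images of the differentials $d_r(\sigma)$. After localizing at $\ell$, $\MU_{(\ell)}$ splits as a wedge of suspensions of $\bp$, with the bottom summand $\bp$ carrying the Thom class to $\hz_{(\ell)}$. So lifting $\sigma$ to $\bp_{(\ell)}$ shows that all $\ell$-primary differentials on $\sigma$ vanish. Here I would use that $\bp$ agrees with the $\ell$-localized holim tower, as in the Remark after \Cref{thm:motivic-bp-construction}, and with the classical fact from \cite{brown1966spectrum}.

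Since this holds for every prime $\ell$, and the differentials land in finitely generated groups for the finite pair, $\sigma$ is a permanent cycle. It therefore lifts to $\MU^{2q}(M(\CC),M(\CC)-X(\CC))$. By Alexander duality for $\MU$, this is $\MU_{2d}(X(\CC))$. The Pontryagin--Thom construction then represents the lifted class by a map $f:Y\to X(\CC)$ from a closed stably almost complex manifold $Y$ of dimension $2d$, with $f_*[Y]=\sigma$.

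For the last assertion, take $Z=[X]$, the fundamental cycle of the pure-dimensional variety $X$, and set $\widetilde X=Y$ and $\pi=f$.
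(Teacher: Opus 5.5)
Your proposal is correct and follows essentially the paper's route: realize the motivic lifts of \Cref{thm:cycle-bp-lifting} into the classical Brown--Peterson tower, pass to the limit using that $M(\CC)/(M(\CC)-X(\CC))$ is a finite complex, use that the $\ell$-localized $\bp$ is a summand of $\MU_{(\ell)}$ compatibly with the Thom class (the paper cites Quillen for this) to kill the obstruction prime by prime, and finish with duality and Pontryagin--Thom. Your Milnor-sequence and Atiyah--Hirzebruch phrasing makes explicit two steps the paper treats in a line each: the vanishing of the $\varprojlim^1$ term, and the vanishing of the image of $x_0$ in the cokernel of $\widetilde{\MU}^{2q}\to\widetilde H^{2q}$.
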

\begin{proof}
Choose an embedding $X\hookrightarrow M$ into a smooth projective variety $M$, and put $q=\dim M-d$. By assumption, the Alexander dual $x_0\in\widetilde H^{2q}_{X(\CC)}(M(\CC);\ZZ)$ of $\sigma$ is the realization of a class in $H^{2q,q}_X(M;\ZZ)$. By \Cref{thm:cycle-bp-lifting} and Betti realization, $x_0$ admits compatible lifts through the classical tower $\bp_s^{\mathrm{top}}$. Thus $x_0$ lies in the image of
\[
\varprojlim_s(\bp_s^{\mathrm{top}})^{2q}_{X(\CC)}(M(\CC))
\longrightarrow\widetilde H^{2q}_{X(\CC)}(M(\CC);\ZZ).
\]
Note that $M(\CC)/(M(\CC)-X(\CC))$ has the homotopy type of a finite CW complex.

Applying this argument for every prime $\ell$ and using \cite[Theorem~4]{quillen1969formal}, we see that the image of $x_0$ in the cokernel of
\[
\widetilde{\MU}^{2q}_{X(\CC)}(M(\CC))
\longrightarrow\widetilde H^{2q}_{X(\CC)}(M(\CC);\ZZ)
\]
vanishes after localization at every prime and hence is zero. Thus $x_0$ lifts to complex cobordism. By duality, using the canonical complex orientation of $M(\CC)$, the class $\sigma$ lifts to complex bordism. The standard Pontryagin--Thom argument then shows that $\sigma$ is represented by a map $f:Y\to X(\CC)$ as asserted.
\end{proof}

\bibliographystyle{plain}
\bibliography{ref}

\Addresses
\end{document}